\newcommand{\RR}{\mathbb R}
\newcommand{\mb}{\mathcal{B}}
\definecolor{cadmiumgreen}{rgb}{0.0, 0.42, 0.24}
\newcommand{\cma}[1]{{\color{magenta}{#1}}}
\newtheorem{theorem}{Theorem}[section]
\newtheorem{corollary}[theorem]{Corollary}
\newtheorem{lemma}[theorem]{Lemma}
\newtheorem{proposition}[theorem]{Proposition}
\newtheorem{definition}[theorem]{Definition}
\newtheorem{remark}[theorem]{Remark}
\numberwithin{equation}{section}
\begin{document}
\title{Singular limit of a chemotaxis model with indirect signal production and phenotype switching} 
\thanks{Corresponding author: Ph.~Lauren\c{c}ot (\texttt{philippe.laurencot@univ-smb.fr})}
\thanks{orcid: 0000-0003-3091-8085 (PhL), 0000-0001-8694-1407 (CS)}
\author{Philippe Lauren\c{c}ot}
\address{Laboratoire de Math\'ematiques (LAMA), UMR~5127, Universit\'e Savoie Mont Blanc, CNRS, F--73000 Chamb\'ery, France}
\email{philippe.laurencot@univ-smb.fr}
\author{Christian Stinner}
\address{Technische Universit\"at Darmstadt, Fachbereich Mathematik, Schlossgartenstr.~7, D--64289 Darmstadt, Germany}
\email{stinner@mathematik.tu-darmstadt.de}
\keywords{chemotaxis system, species with two subpopulations, fast reaction limit, partially diffusive system}
\subjclass[2010]{35B25, 35K57, 35M33, 35Q92}
\date{\today}
%
\begin{abstract}
Convergence of solutions to a partially diffusive chemotaxis system with indirect signal production and  phenotype switching is shown in a two-dimensional setting when the switching rate increases to infinity, thereby providing a rigorous justification of formal computations performed in the literature. The expected limit system being the classical parabolic-parabolic Keller-Segel system, the obtained convergence is restricted to a finite time interval for general initial conditions but valid for arbitrary bounded time intervals when the mass of the initial condition is appropriately small. Furthermore, if the solution to the limit system blows up in finite time, then neither of the two phenotypes in the partially diffusive system can be uniformly bounded with respect to the $L_2$-norm beyond that time.
\end{abstract}

\maketitle

%
%
\pagestyle{myheadings}
\markboth{\sc{Ph.~Lauren\c cot \& C.~Stinner}}{Singular limit of a chemotaxis model}

\section{Introduction}

The singular limit $\gamma\to\infty$ of a chemotaxis model with indirect signal production and phenotype switching
\begin{equation}\label{fds}
\begin{array}{ll}
\partial_t u_\gamma = \mathrm{div}(D_u\nabla u_\gamma - u_\gamma \nabla w_\gamma) + \gamma \big( \theta v_\gamma -u_\gamma \big) & \text{ in }\;\; (0,\infty) \times \Omega,  \\[2mm]
\partial_t v_\gamma = D_v \Delta v_\gamma + \gamma \big( u_\gamma - \theta v_\gamma \big) & \text{ in }\;\; (0,\infty) \times \Omega, \\[2mm]
\partial_t w_\gamma = D \Delta w_\gamma - \alpha w_\gamma + v_\gamma & \text{ in }\;\; (0,\infty) \times \Omega,
\end{array}
\end{equation}
supplemented with no-flux boundary conditions and initial conditions $(u_\gamma,v_\gamma,w_\gamma)(0) = (u_0,v_0,w_0)$, is studied in \cite{PaWi2023} in a bounded domain $\Omega \subset \mathbb{R}^n$ with $n \le 3$ and positive constants $D_u$, $D_v$,
$D$, $\alpha$, and $\theta$, which were all set to $1$ in \cite{PaWi2023}. 
The above model describes the dynamics of a population of cells divided into two phenotypes with respective densities $u_\gamma$ and $v_\gamma$ and a chemoattractant with concentration $w_\gamma$. The model is designed to account for phenotype-dependent responses of cells to the presence of a chemoattractant \cite{MLP2022}. While both phenotypes are diffusing with possibly different diffusion rates $D_u$ and $D_v$, only the motion of the phenotype with density $u_\gamma$ features a chemotactic bias, the chemoattractant being only secreted by the other phenotype. For more details concerning the modeling we refer to \cite{MLP2022, PaWi2023} and the references therein. 

As pointed out in \cite[Section~2]{PaWi2023}, letting $\gamma\to\infty$ formally, one finds $u_\gamma \sim \theta v_\gamma$, so that, introducing the total density $n_\gamma := u_\gamma + v_\gamma$, we expect that $u_\gamma\sim \theta n_\gamma/(1+\theta)$ and $v_\gamma\sim n_\gamma/(1+\theta)$. Since $n_\gamma$ solves
\begin{equation*}
	\partial_t n_\gamma = \mathrm{div}(D_u \nabla u_\gamma - u_\gamma \nabla w_\gamma) +  D_v \Delta v_\gamma \qquad\text{ in }\;\; (0,\infty) \times \Omega, 
\end{equation*}
cluster points of $(n_\gamma,w_\gamma)_{\gamma}$ as $\gamma\to\infty$ (if any) are bound to solve the following parabolic-parabolic Keller-Segel system
\begin{align*}
	\partial_t n & = \frac{D_u \theta + D_v}{1+\theta} \Delta n -\frac{\theta}{1+\theta} \mathrm{div} ( n \nabla w) &\text{ in }\;\; (0,\infty) \times \Omega, \\
	\partial_t w & = D \Delta w - \alpha w + \frac{n}{1+\theta} & \text{ in }\;\; (0,\infty) \times \Omega,  
\end{align*}
supplemented with no-flux boundary conditions and initial conditions $(n,w)(0) = (n_0,w_0)$ with $n_0= u_0+v_0$. A challenging issue in establishing the connection between these two systems is that, while the former is globally well-posed in space dimension 2 and 3 \cite{FuSe2017,PaWi2023}, finite time blowup may occur for the latter \cite{HeVe1997, Wi2013}. 

As the diffusion rates of the two phenotypes may be of different orders, we focus here on the limit case where the phenotype which produces the chemoattractant is not diffusing and static ($D_v=0$) and set $D_u=1$ for simplicity. 
For the corresponding system, global well-posedness and the existence of a critical mass in a bounded domain of $\mathbb{R}^2$ are established in \cite{LaSt2021}. It is a particular case of the model from \cite{RLKB2014} for foraging ants and also is a variant of chemotaxis models for building termites nests, mountain pine beetles, and cancer cells. For more details we refer to the introduction of \cite{LaSt2021} and the references therein. 

We therefore assume that $\Omega \subset \mathbb{R}^2$ is a bounded domain with smooth boundary and investigate the singular limit $\gamma\to\infty$ of the system
\begin{subequations}\label{aPL}
\begin{align}
\partial_t u_\gamma & = \mathrm{div}(\nabla u_\gamma - u_\gamma \nabla w_\gamma) + \gamma \big( \theta v_\gamma -u_\gamma \big) &\text{ in }\;\; (0,\infty) \times \Omega, \label{a1PL} \\
\partial_t v_\gamma & = \gamma \big( u_\gamma - \theta v_\gamma \big) & \text{ in }\;\; (0,\infty) \times \Omega, \label{a2PL} \\
\partial_t w_\gamma & = D \Delta w_\gamma - \alpha w_\gamma + v_\gamma & \text{ in }\;\; (0,\infty) \times \Omega, \label{a3PL}
\end{align}
supplemented with no-flux boundary conditions
\begin{equation}
\big( \nabla u_\gamma - u_\gamma \nabla w_\gamma \big) \cdot \mathbf{n} = \nabla w_\gamma \cdot \mathbf{n} =0 \qquad\text{ on }\;\; (0,\infty) \times \partial \Omega \label{bPL}
\end{equation}
\end{subequations}
and initial conditions
\begin{equation}\label{iPL}
  (u_\gamma,v_\gamma,w_\gamma)(0) = (u_0,v_0,w_0) \qquad\text{ in }\;\; \Omega,
\end{equation}
where the parameters $D$, $\alpha$, and $\theta$ are positive.

According to the above formal calculation, we expect that in the limit $\gamma\to\infty$ solutions to~\eqref{aPL}--\eqref{iPL} converge to that of
\begin{subequations}\label{aKS}
\begin{align}
	\partial_t n & = \frac{\theta}{1+\theta} \mathrm{div}(\nabla n - n \nabla w) &\text{ in }\;\; (0,\infty) \times \Omega, \label{a1KS}\\
	\partial_t w & = D \Delta w - \alpha w + \frac{n}{1+\theta} & \text{ in }\;\; (0,\infty) \times \Omega, \label{a2KS} 
\end{align}
supplemented with no-flux boundary conditions
\begin{equation}
	\big( \nabla n - n \nabla w \big) \cdot \mathbf{n} = \nabla w \cdot \mathbf{n} =0 \qquad\text{ on }\;\; (0,\infty) \times \partial \Omega, \label{a3KS}
\end{equation}
\end{subequations}
and initial conditions
\begin{equation}\label{iKS}
	(n,w)(0) = (n_0,w_0) \qquad\text{ in }\;\; \Omega,
\end{equation}
with $n_0= u_0+v_0$, and the purpose of this work is to provide a proof of this fact. Such a result supplements nicely the analysis performed in \cite[Lemmas~4.9 and~4.10]{PaWi2023} for the fully diffusive system~\eqref{fds}, where only a conditional convergence result is provided. As already mentioned, the first difficulty to be faced is that, while solutions to~\eqref{aPL}--\eqref{iPL} are always global in time, see \cite[Theorem~1.1]{LaSt2021} and \Cref{prop:wp} below, solutions to~\eqref{aKS}--\eqref{iKS} may blow up in finite time when $\|n_0\|_1=\|u_0+v_0\|_1$ exceeds the threshold value $4\pi (1+\theta)D$, see \cite{Ho2001, Ho2002, HoWa2001} and \Cref{sec:resc} below. This feature excludes convergence on arbitrary time intervals and we shall thus prove only convergence on a small time interval for arbitrary initial data. It is actually proved in \cite[Theorem~1.4]{PaWi2023} that there are initial conditions such that, for the fully diffusive system \eqref{fds}, the family $(u_\gamma,v_\gamma)_{\gamma>0}$ is not bounded in $L_\infty((0,T)\times\Omega)$ for some $T>0$ sufficiently large and we provide a corresponding result for \eqref{aPL}--\eqref{iPL} in \Cref{thm:ub} below. When $\|n_0\|_1<4\pi (1+\theta)D$, convergence on any finite time interval is established. The second difficulty is of a more technical nature and is due to the assumption that one of the species does not diffuse in space \cite{RLKB2014}. As a consequence, no compactness with respect to the space variable is available for $(v_\gamma)_{\gamma>0}$.

\begin{theorem}\label{thm:sl}
	Assume that $\Omega \subset \mathbb{R}^2$ is a bounded domain with smooth boundary. For $M>0$, we consider
	\begin{equation}
		(u_0,v_0,w_0) \in W_{3,+}^1(\Omega) \times W_{3,+}^1(\Omega) \times W_{2,+}^2(\Omega) \label{eq:ic}
	\end{equation}
	satisfying 
	\begin{equation}
		\|u_0+v_0\|_{1} = M. \label{mass0}
	\end{equation}
	For $\gamma>0$, we denote the solution to \eqref{aPL}--\eqref{iPL}
	given by \Cref{prop:wp} by $(u_\gamma,v_\gamma,w_\gamma)$ and set $n_\gamma := u_\gamma + v_\gamma$. Then there is $T_\infty\in (0,\infty]$ such that, for all $T\in (0,T_\infty)$,
	\begin{subequations}\label{cv}
		\begin{align}
			\lim_{\gamma\to\infty} \int_0^T \|n_\gamma(t) - n(t)\|_2^2\ \mathrm{d}t = 0, \label{cvn} \\
			\lim_{\gamma\to \infty} \sup_{t\in [0,T]} \|w_\gamma(t) - w(t)\|_{W_2^1} = 0, \label{cvw}
		\end{align}
	\end{subequations}
	where $(n,w)$ is the unique weak-strong solution to~\eqref{aKS}--\eqref{iKS} with $n_0 := u_0+v_0$ in the sense of \Cref{def:c1}.
	
	Moreover, $T_\infty=\infty$ if $M = \| u_0+v_0 \|_{1} \in (0,4\pi(1+\theta)D)$.
\end{theorem}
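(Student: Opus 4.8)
The strategy is a compactness-and-uniqueness argument combined with energy estimates that are uniform in $\gamma$ on a suitable time interval. First I would establish $\gamma$-independent bounds. The total mass $\|n_\gamma(t)\|_1 = M$ is conserved since the reaction terms cancel when the equations for $u_\gamma$ and $v_\gamma$ are added. Using the natural Lyapunov functional for \eqref{aPL} (or the a priori estimates from \cite{LaSt2021}), together with parabolic regularity for the $w_\gamma$-equation, I would derive bounds on $(u_\gamma, v_\gamma)$ in $L_\infty((0,T); L\log L(\Omega))$ and then bootstrap: the cancellation of the stiff terms means $n_\gamma$ satisfies a drift-diffusion equation with no $\gamma$-dependence on the right-hand side except through $\nabla w_\gamma$, so one obtains bounds on $n_\gamma$ in $L_\infty((0,T); L_2(\Omega)) \cap L_2((0,T); W_2^1(\Omega))$ and on $\partial_t n_\gamma$ in a dual space, provided $\|w_\gamma\|$ is controlled. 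When $M < 4\pi(1+\theta)D$ these estimates are global in time (mirroring the no-blowup threshold), while for general $M$ they hold only up to some $T_\infty$ determined by the limit system — this is where $T_\infty$ enters.

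Second, extract subsequential limits. From the $\gamma$-uniform bounds and Aubin–Lions, $n_\gamma \to n$ strongly in $L_2((0,T)\times\Omega)$ and $w_\gamma \to w$ strongly in $C([0,T]; W_2^1(\Omega))$ (the latter from parabolic smoothing in \eqref{a3PL} and compactness of the embedding). The key structural point is that $\gamma(\theta v_\gamma - u_\gamma) = \partial_t v_\gamma$ by \eqref{a2PL}, and one shows $\theta v_\gamma - u_\gamma \to 0$ in an appropriate norm — e.g. by testing the $v_\gamma$-equation and using that $u_\gamma + v_\gamma$ is bounded, one gets $\|\theta v_\gamma - u_\gamma\|_{L_2((0,T)\times\Omega)}^2 = O(1/\gamma)$. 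Hence $u_\gamma \to \theta n/(1+\theta)$ and $v_\gamma \to n/(1+\theta)$, and passing to the limit in the weak formulation of the $n_\gamma$-equation and the $w_\gamma$-equation identifies $(n,w)$ as a weak solution to \eqref{aKS}--\eqref{iKS}. Since the limit is characterized uniquely as the weak-strong solution of \Cref{def:c1}, the whole family converges, not just a subsequence.

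The lack of spatial compactness for $(v_\gamma)_\gamma$ — flagged in the introduction as the second difficulty — must be handled with care: one cannot apply Aubin–Lions directly to $v_\gamma$. The remedy is to work with $n_\gamma$ (which does diffuse) as the primary unknown and to recover $v_\gamma$ only through the algebraic relation $v_\gamma \approx n_\gamma/(1+\theta)$ in the limit, using the $O(1/\gamma)$ control on $\theta v_\gamma - u_\gamma$ together with $u_\gamma = n_\gamma - v_\gamma$; this gives $(1+\theta)v_\gamma - n_\gamma = -(\theta v_\gamma - u_\gamma) \to 0$ in $L_2((0,T)\times\Omega)$ without needing spatial regularity of $v_\gamma$ itself. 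For \eqref{cvw} one then feeds the strong $L_2$-convergence of $v_\gamma$ into the (smoothing) $w$-equation.

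I expect the main obstacle to be item one: obtaining $\gamma$-uniform estimates strong enough to pass to the limit, specifically controlling the chemotactic drift $u_\gamma \nabla w_\gamma$ uniformly in $\gamma$ up to a time $T_\infty$ that does not degenerate as $\gamma \to \infty$. In two dimensions with a mass below the critical threshold this follows from the classical subcritical-mass estimates (entropy dissipation plus the Moser–Trudinger inequality), giving $T_\infty = \infty$; for supercritical or unrestricted mass one instead argues locally in time, using the regularity of the limit solution $(n,w)$ on $[0,T_\infty)$ to close a Gronwall-type estimate on the difference $n_\gamma - n$ — so the construction of $T_\infty$ is inseparable from the well-posedness theory for \eqref{aKS} and the quantitative stability estimate comparing $(u_\gamma,v_\gamma,w_\gamma)$ to $(n,w)$. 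The convergence rates and the interplay between the stiff-limit error $O(1/\gamma)$ and the Gronwall constant (which blows up as $T \uparrow T_\infty$) are the technical heart of the argument.
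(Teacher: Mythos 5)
Your overall architecture (uniform bounds, compactness, identification of the limit, uniqueness of the weak--strong solution to upgrade subsequential to full convergence, then an energy estimate on $w_\gamma-w$ to get \eqref{cvw}) matches the paper. But there is one genuine gap and one significant divergence worth flagging.

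The gap is your claim that ``from the $\gamma$-uniform bounds and Aubin--Lions, $n_\gamma\to n$ strongly in $L_2((0,T)\times\Omega)$,'' justified by the remark that $n_\gamma$ ``does diffuse.'' It does not, in the sense you need: the equation for $n_\gamma$ reads $\partial_t n_\gamma=\mathrm{div}(\nabla u_\gamma-u_\gamma\nabla w_\gamma)$, so the flux involves $\nabla u_\gamma$ only, and the available uniform bounds give $u_\gamma$ in $L_2((0,T),W_2^1(\Omega))$ but provide no control whatsoever on $\nabla v_\gamma$, hence none on $\nabla n_\gamma$. With $n_\gamma$ bounded only in $L_\infty((0,T),L_2(\Omega))$ and $\partial_t n_\gamma$ in $L_2((0,T),W_2^1(\Omega)')$, Aubin--Lions yields relative compactness in $C([0,T],W_2^1(\Omega)')$ only --- \emph{not} strong convergence in $L_2((0,T)\times\Omega)$, which is precisely the assertion \eqref{cvn} you are trying to prove. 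The missing ingredient is an interpolation (Ehrling's lemma): write $n_{\gamma_j}-n=\tfrac{1+\theta}{\theta}\bigl(u_{\gamma_j}-\tfrac{\theta n}{1+\theta}\bigr)+\tfrac{1}{\theta}e_{\gamma_j}$ and estimate $\|z\|_2\le\eta\|z\|_{W_2^1}+c(\eta)\|z\|_{(W_2^1)'}$ applied to $z=u_{\gamma_j}-\tfrac{\theta n}{1+\theta}$, which \emph{is} bounded in $L_2((0,T),W_2^1(\Omega))$ and converges to zero in $L_2((0,T),W_2^1(\Omega)')$ by the $C([0,T],(W_2^1)')$-compactness of $(n_{\gamma_j})$ and the vanishing of $e_{\gamma_j}$. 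Without this step (or an equivalent one) the strong convergence of $n_\gamma$, and with it the nonlinear term $u_\gamma\nabla w_\gamma$, cannot be passed to the limit.

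The divergence concerns $T_\infty$ for general mass. You propose to determine $T_\infty$ from the limit system and to close the argument via a Gronwall estimate on $n_\gamma-n$ using the regularity of the limit solution. The paper instead obtains $T_\infty$ from a self-contained ODE argument on the $\gamma$-system: a modified entropy $P_\gamma(u_\gamma,v_\gamma,w_\gamma)$ satisfies $\frac{\mathrm{d}}{\mathrm{d}t}P_\gamma\le C_1e^{C_1P_\gamma}$ (via the Gagliardo--Nirenberg inequality with a truncation at level $K$ chosen in terms of $\|L(u_\gamma)\|_1$), which stays finite up to an explicit time depending only on the data. This makes $T_\infty$ independent of the limit system and avoids the circularity lurking in your version: a stability estimate comparing $(u_\gamma,v_\gamma,w_\gamma)$ to $(n,w)$ already requires uniform bounds on $u_\gamma,v_\gamma$ to control the error terms, which is what the estimate is supposed to deliver. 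Your route might be salvageable as a relative-entropy argument, but as stated it does not close.
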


The final statement of \Cref{thm:sl} is valid for all masses below the critical mass and this property can be established thanks to the construction of a Liapunov functional \cite{LaSt2021}. The latter does not seem to be available for the fully diffusive system \eqref{fds} and it is yet unclear whether \Cref{thm:sl} extends to that case.
 
We next establish the analogue of \cite[Theorem~1.4]{PaWi2023} for the partially diffusive system~\eqref{aPL}--\eqref{iPL}.
	
\begin{theorem}\label{thm:ub}
	Consider $(u_0,v_0,w_0)$ satisfying~\eqref{eq:ic} and set $n_0:= u_0+w_0$. Assume that the corresponding classical solution $(n,w)$ to~\eqref{aKS}--\eqref{iKS} blows up at some finite time $T_{bu}\in (0,\infty)$; that is, 
	\begin{equation}
		\limsup_{t\to T_{bu}} \|n(t)\|_\infty = \infty. \label{eq:i1}
	\end{equation}
	Then, for all $T>T_{bu}$,
  \begin{equation}
		\sup_{\gamma\ge 1}\big\{ \|u_\gamma\|_{L_\infty((0,T),L_2(\Omega))}\big\} = \infty \qquad\mbox{and}\qquad
		\sup_{\gamma\ge 1}\big\{ \|v_\gamma\|_{L_\infty((0,T),L_2(\Omega))}\big\} = \infty,
	\end{equation}
	where $(u_\gamma,v_\gamma,w_\gamma)$ denotes the solution to \eqref{aPL}--\eqref{iPL} with initial condition $(u_0,v_0,w_0)$	given by \Cref{prop:wp}.
\end{theorem}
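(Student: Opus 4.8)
The strategy is a contradiction argument built on the convergence result of \Cref{thm:sl}, exploiting the fact that boundedness of $(u_\gamma,v_\gamma)_{\gamma\ge 1}$ in $L_\infty((0,T),L_2(\Omega))$ would force the limit problem to have a solution that is too regular on $[0,T]$, contradicting the assumed blowup at $T_{bu}<T$. Suppose, for contradiction, that at least one of the two suprema is finite; since $\partial_t v_\gamma = \gamma(u_\gamma-\theta v_\gamma)$ couples the two phenotypes, an $L_2$-bound on one of them propagates (via the already-established a priori estimates from \Cref{prop:wp} and the mass conservation $\|n_\gamma(t)\|_1=\|n_0\|_1$) to a bound on $n_\gamma=u_\gamma+v_\gamma$ in $L_\infty((0,T),L_2(\Omega))$, and then to a bound on $w_\gamma$ in, say, $L_\infty((0,T),W_2^2(\Omega))$ by parabolic regularity applied to~\eqref{a3PL}. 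The plan is to show that such uniform bounds allow one to extract a limit $(\bar n,\bar w)$ on the whole interval $[0,T]$ that is a weak solution of~\eqref{aKS}--\eqref{iKS} with $\bar n\in L_\infty((0,T),L_2(\Omega))$, and that by weak-strong uniqueness (the notion used in \Cref{thm:sl}, \Cref{def:c1}) this limit must coincide with the classical solution $(n,w)$ on $[0,T_{bu})$. But $\|n(t)\|_2\le C\|n(t)\|_\infty^{1/2}\|n_0\|_1^{1/2}$ only gives an upper bound; instead I would use the reverse: a solution with $\bar n\in L_\infty((0,T),L_2(\Omega))$, $T>T_{bu}$, when fed into the parabolic equation~\eqref{a2KS} for $w$ and then into~\eqref{a1KS}, bootstraps to $n\in L_\infty((0,T),L_\infty(\Omega))$ by the standard Moser--Alikakos/Gajewski--Zacharias iteration available in two space dimensions for Keller--Segel systems, contradicting~\eqref{eq:i1}. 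Hence both suprema are infinite.

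Concretely, the steps I would carry out are: (i) assume $\sup_{\gamma\ge 1}\|v_\gamma\|_{L_\infty((0,T),L_2(\Omega))}=:K<\infty$ (the case with $u_\gamma$ is symmetric after using the $v_\gamma$-equation and the Duhamel formula $v_\gamma(t)=e^{-\gamma\theta t}v_0+\gamma\int_0^t e^{-\gamma\theta(t-s)}u_\gamma(s)\,\mathrm{d}s$ to transfer the bound); (ii) deduce via mass conservation and the already-known uniform-in-$\gamma$ estimates from the proof of \Cref{prop:wp} that $\|n_\gamma\|_{L_\infty((0,T),L_2(\Omega))}\le C(K)$ and $\|w_\gamma\|_{L_\infty((0,T),W_2^2(\Omega))}\le C(K)$, with constants independent of $\gamma$; (iii) use these bounds, together with the weak formulation of~\eqref{aPL}, to pass to the limit $\gamma\to\infty$ along a subsequence and identify the limit $(\bar n,\bar w)$ as a weak solution of~\eqref{aKS}--\eqref{iKS} on $[0,T]$ with $\bar n\in L_\infty((0,T),L_2(\Omega))\cap C([0,T],L_2(\Omega)\text{-weak})$ --- here the relations $u_\gamma\to\theta\bar n/(1+\theta)$, $v_\gamma\to\bar n/(1+\theta)$ weakly follow from $\gamma(\theta v_\gamma-u_\gamma)=\partial_t u_\gamma-\mathrm{div}(\nabla u_\gamma-u_\gamma\nabla w_\gamma)$ being bounded in a negative Sobolev space; (iv) invoke weak-strong uniqueness (\Cref{def:c1} and the uniqueness assertion of \Cref{thm:sl}) on the subinterval $[0,T_{bu})$ to get $(\bar n,\bar w)=(n,w)$ there, so that $n\in L_\infty((0,T_{bu}),L_2(\Omega))$ extends to $t\le T_{bu}$ with a bound; (v) run the two-dimensional $L_p$-bootstrap for $(n,w)$: from $n\in L_\infty_t L_2_x$ obtain $w\in L_\infty_t W_2^2_x\hookrightarrow L_\infty_t W_q^1_x$ for all $q<\infty$, then $\nabla w\in L_\infty_{t,x}$ after one more step, and finally run the iteration on $\|n\|_p$ to reach $n\in L_\infty((0,T_{bu}),L_\infty(\Omega))$, contradicting~\eqref{eq:i1}.

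The main obstacle is step (iii)--(iv): making the limit identification rigorous despite the absence of spatial compactness for $(v_\gamma)_{\gamma\ge 1}$, which is precisely the ``second difficulty'' flagged in the introduction. One has strong compactness for $(w_\gamma)_\gamma$ (parabolic smoothing in~\eqref{a3PL}) and for $(n_\gamma)_\gamma$ in $L^2((0,T)\times\Omega)$ via an Aubin--Lions argument using the $n_\gamma$-equation, but $v_\gamma$ itself only converges weakly; the product $n_\gamma\nabla w_\gamma$ in the flux must therefore be handled by combining weak convergence of $n_\gamma$ in $L_\infty_tL_2_x$ with strong convergence of $\nabla w_\gamma$ in $L_2((0,T),L_r(\Omega))$ for suitable $r$. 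A second delicate point is that \Cref{thm:sl} guarantees uniqueness only in the weak-strong class, so one must ensure the extracted limit $(\bar n,\bar w)$ has enough regularity to be tested against the strong solution --- this is exactly why the uniform $L_\infty_tL_2_x$ bound (rather than something weaker) is the quantity one assumes for contradiction, and why recovering $\bar n\in C([0,T],L_2\text{-weak})$ with $\bar n(0)=n_0$ in step (iii) is essential. Once $(\bar n,\bar w)=(n,w)$ on $[0,T_{bu})$ with a uniform $L_2$-in-space bound up to $T_{bu}$, the parabolic bootstrap in step (v) is routine in dimension two and closes the argument.
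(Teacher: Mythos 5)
Your overall strategy -- contradiction via convergence of $(n_\gamma,w_\gamma)$ to a weak-strong solution of~\eqref{aKS}--\eqref{iKS} on $(0,T)$, identification with the classical solution through weak-strong uniqueness, and reduction of the $u_\gamma$-case to the $v_\gamma$-case through the ODE~\eqref{a2PL} -- is exactly the paper's strategy (the Duhamel/comparison step is the paper's Lemma~\ref{lem:ub2}). However, there is a genuine gap at your step~(ii). You assert that the assumed bound $\|v_\gamma\|_{L_\infty((0,T),L_2(\Omega))}\le K$ propagates, ``via the already-established a priori estimates from \Cref{prop:wp} and mass conservation,'' to the bounds needed to pass to the limit. \Cref{prop:wp} is a well-posedness statement and provides no estimate uniform in $\gamma$; mass conservation only gives $L_1$ control. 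What is actually required to run the compactness machinery of \Cref{prop:bd} is the full set~\eqref{ubd}, in particular the $L_\infty((0,T),L_2(\Omega))$ bound on $u_\gamma$, the $L_2((0,T),W_2^1(\Omega))$ bound on $u_\gamma$ (without which the limit $\bar n$ does not lie in the weak-strong class of \Cref{def:c1}, so uniqueness cannot be invoked), and crucially the dissipation estimate $\int_0^T\gamma\|e_\gamma\|_2^2\,\mathrm{d}t\le C$. None of these follow from the $L_2$ bound on $v_\gamma$ without a dedicated computation: the paper's Lemma~\ref{lem:ub1} obtains them by first getting $w_\gamma$ in $L_\infty W_2^1$ and $\partial_t w_\gamma$ in $L_2L_2$ from~\eqref{a3PL}, then testing~\eqref{a1PL} with $u_\gamma$ and~\eqref{a2PL} with $\theta v_\gamma$ so that the exchange terms combine into $-2\gamma\|e_\gamma\|_2^2$, absorbing $\int_\Omega u_\gamma^2\Delta w_\gamma\,\mathrm{d}x$ via~\eqref{eq:GN} and Young's inequality, and closing with Gronwall. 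This energy identity is the heart of the proof and is absent from your plan.

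Your proposed substitute in step~(iii) -- identifying $u=\theta\bar n/(1+\theta)$ from ``$\gamma(\theta v_\gamma-u_\gamma)$ bounded in a negative Sobolev space'' via the $u_\gamma$-equation -- is circular, since the only control on $\partial_t u_\gamma$ comes from that same equation and contains the term $\gamma e_\gamma$ you are trying to bound. (A non-circular variant exists: $e_\gamma=\gamma^{-1}\partial_t v_\gamma\rightharpoonup 0$ in $\mathcal{D}'$ using only the $L_\infty L_2$ bound on $v_\gamma$; but this gives weak convergence only, whereas the paper needs $\|e_\gamma\|_{L_2((0,T)\times\Omega)}^2\le C/\gamma$ to obtain the \emph{strong} $L_2$ convergence of $n_\gamma$ via Ehrling's lemma, and in any case it does not supply the missing gradient bound.) A secondary, harmless difference: for the endgame the paper simply cites \cite[Theorem~3.6]{HoWa2001} to convert the $L_\infty$ blowup~\eqref{eq:i1} into $\lim_{t\to T_{bu}}\|n(t)\|_2=\infty$, so that $n\in L_\infty((0,T),L_2(\Omega))$ is an immediate contradiction; your step~(v) instead re-derives the $L_2$-to-$L_\infty$ bootstrap, which is workable in two dimensions but unnecessary.
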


Before we start to prove the above results, we recall in \Cref{sec:wp} the global well-posedness for \eqref{aPL}--\eqref{iPL} which is established in \cite{LaSt2021}. 
The proof of \Cref{thm:sl} involves two steps: the derivation of estimates on $(u_\gamma,v_\gamma,w_\gamma)_\gamma$ which do not depend on $\gamma$, eventually leading to compactness, and afterwards the convergence of $(u_\gamma,v_\gamma,w_\gamma)_\gamma$. The proofs of the former in \Cref{sec:ue} bear some similarities with computations performed to establish the local or global existence of solutions to the Keller-Segel system~\eqref{aKS}--\eqref{iKS} and are divided into small time estimates for arbitrary masses $M>0$ and global estimates for $M \in (0,4\pi(1+\theta)D)$. The convergence proof is more involved and we shall deal with it in \Cref{sec:cv}. One important step in the proof is the strong convergence of $(v_\gamma)_\gamma$ which does not follow from classical compact embeddings due to the non-diffusive character of \eqref{a2PL}. The proof of \Cref{thm:sl} is completed in \Cref{sec:pth}. The final \Cref{sec:ub} is devoted to the proof of \Cref{thm:ub}, where we show that uniform bounds on the $L_2$-norm of either $v_\gamma$ or $u_\gamma$ imply the validity of the estimates which are required for the convergence proof of \Cref{thm:sl} and contradict the assumed blowup in view of the uniqueness of the weak-strong solution to~\eqref{aKS}--\eqref{iKS}. We collect in three appendices some results used in the proofs. In \Cref{sec:mti} we recall the well-known Moser-Trudinger inequality, while the rescaling performed in \Cref{sec:resc} shows the optimality of the mass constraint from \Cref{thm:sl}. In \Cref{sec:uws} we give the definition of the weak-strong solution to \eqref{aKS}--\eqref{iKS} and prove its uniqueness, adapting an argument from \cite{HoWi2005}.

\section{Well-posedness}\label{sec:wp}

In this section we recall the global well-posedness for \eqref{aPL}--\eqref{iPL} established in \cite[Theorem~1.1]{LaSt2021} in an appropriate functional setting and give the notation of the involved function spaces. More precisely, for $r\in (1,\infty)$, we set
\begin{align}\label{in1}
& W^m_{r, \mb}(\Omega) := \left\{ z \in W^m_r (\Omega) \: : \nabla z \cdot \mathbf{n} =0 \mbox{ on } \partial\Omega \right\} \quad\text{if}\quad 1 + \frac{1}{r} < m \le 2, \nonumber \\
& W^m_{r, \mb}(\Omega) := W^m_r (\Omega) \quad\text{if}\quad -1 + \frac{1}{r} < m < 1 + \frac{1}{r}, \\
& W^m_{r, \mb}(\Omega) := W^{-m}_{r/(r-1)} (\Omega)^\prime \quad\text{if}\quad -2 + \frac{1}{r} < m \le -1 + \frac{1}{r}, \nonumber 
\end{align}
and
\begin{equation}\label{in2}
W^m_{r, \mb, +}(\Omega) := \left\{ z \in W^m_{r, \mb} (\Omega) \: : z \ge 0 \mbox{ in } \Omega \right\}, 
\end{equation}
where $W_r^m(\Omega)$, $m\in [0,\infty)$, $r\in [1,\infty)$, denote the usual Sobolev spaces, see \cite[Section~5]{Ama1993}. 

\begin{proposition}\label{prop:wp}
  Let $M>0$, $\gamma>0$, and $(u_0,v_0,w_0) \in W_{3,+}^1(\Omega;\mathbb{R}^3)$ satisfying~\eqref{mass0}.
  	Then the system \eqref{aPL}--\eqref{iPL} has a unique non-negative weak solution $(u_\gamma,v_\gamma,w_\gamma)$ in $W^1_3(\Omega)$ defined on $[0,\infty)$ satisfying
  \begin{align*}
    & u_\gamma \in C \left([0,\infty),W^{1}_{3,+}(\Omega) \right) \cap C^1 \left( [0,\infty), W^1_{3/2} (\Omega , \RR^2)^\prime \right), \\
    & v_\gamma \in C^1([0,\infty),W^1_{3,+}(\Omega)), \\
    & w_\gamma \in C \left([0,\infty),W^{1}_{3,+}(\Omega) \right) \cap C^1 \left( [0,\infty), W^1_{3/2} (\Omega , \RR^2)^\prime \right),
  \end{align*}
  and
  \begin{equation}\label{mass}
    \| (u_\gamma+v_\gamma)(t) \|_{1} = M, \qquad t \ge 0.
  \end{equation}
  Moreover,
  \begin{align*}
  & u_\gamma \in C \left((0,\infty),W^{2}_{3, \mb}(\Omega) \right) \cap C^1 \left( (0,\infty), L_3 (\Omega) \right) \, , \\
  & w_\gamma \in C \left((0,\infty),W^{2}_{3, \mb}(\Omega) \right) \cap C^1 \left( (0,\infty), L_3 (\Omega) \right) \, .
  \end{align*}
\end{proposition}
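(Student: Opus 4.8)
The plan is to treat \eqref{aPL}--\eqref{iPL} for a fixed $\gamma>0$ as an abstract quasilinear parabolic problem, coupled to the purely temporal (non-diffusive) equation \eqref{a2PL}, and to assemble the statement from four ingredients: local well-posedness and regularity via analytic semigroup theory, non-negativity via an invariance/comparison argument, the conservation law \eqref{mass} by integration, and finally global existence via a priori bounds. First I would regard $w_\gamma$ as given in \eqref{a1PL}, so that the latter becomes a linear, uniformly elliptic equation in divergence form with drift $-\nabla w_\gamma$ and bounded zeroth-order term $-\gamma$. Within the interpolation--extrapolation scale $W^m_{r,\mb}(\Omega)$ of \eqref{in1}, which is precisely Amann's functional-analytic setting \cite{Ama1993}, the operator $u\mapsto -\mathrm{div}(\nabla u - u\nabla w)+\gamma u$ generates an analytic semigroup on $L_3(\Omega)$ with domain $W^2_{3,\mb}(\Omega)$. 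Since \eqref{a2PL} contains no spatial derivatives, I would integrate it explicitly,
\begin{equation*}
v_\gamma(t) = v_0\, e^{-\gamma\theta t} + \gamma \int_0^t e^{-\gamma\theta(t-s)} u_\gamma(s)\,\mathrm{d}s,
\end{equation*}
which shows that $v_\gamma$ inherits the spatial regularity of $u_\gamma$ and is $C^1$ in time with values in $W^1_3(\Omega)$, consistent with the absence of any $W^2$-gain for $v_\gamma$ in the proposition. Treating \eqref{a3PL} as a standard linear parabolic equation with source $v_\gamma$, a contraction-mapping argument on a short interval $[0,T_{\max})$ closes local existence for the triple $(u_\gamma,v_\gamma,w_\gamma)$ in $C([0,T],W^1_3(\Omega)^3)$, and parabolic smoothing supplies the interior regularity $C((0,\infty),W^2_{3,\mb})\cap C^1((0,\infty),L_3)$ for $u_\gamma$ and $w_\gamma$ asserted at the end.

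Next I would establish non-negativity. Because the three signs are mutually coupled, I would argue simultaneously on the local interval, exploiting that the reaction structure is quasi-monotone: the off-diagonal dependence of each equation on the others enters with non-negative coefficients ($\gamma\theta$ in \eqref{a1PL}, $\gamma$ in \eqref{a2PL}, and $1$ in \eqref{a3PL}), while the chemotactic flux $-u_\gamma\nabla w_\gamma$ vanishes on $\{u_\gamma=0\}$. Hence the non-negative orthant is invariant. Concretely, the Duhamel formula gives $v_\gamma\ge 0$ once $u_\gamma\ge 0$; the parabolic comparison principle applied to \eqref{a3PL} with non-negative source $v_\gamma$ gives $w_\gamma\ge 0$; and \eqref{a1PL}, rewritten as $\partial_t u_\gamma - \Delta u_\gamma + \nabla w_\gamma\cdot\nabla u_\gamma + (\Delta w_\gamma+\gamma)u_\gamma = \gamma\theta v_\gamma \ge 0$, yields $u_\gamma\ge 0$ by the maximum principle (the unbounded-below zeroth-order coefficient being handled by the usual exponential shift). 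The conservation law \eqref{mass} then follows by adding \eqref{a1PL} and \eqref{a2PL}: the reaction terms cancel, so $n_\gamma=u_\gamma+v_\gamma$ obeys $\partial_t n_\gamma = \mathrm{div}(\nabla u_\gamma - u_\gamma\nabla w_\gamma)$, and integrating over $\Omega$ with the no-flux condition \eqref{bPL} gives $\tfrac{\mathrm{d}}{\mathrm{d}t}\|n_\gamma(t)\|_1=0$.

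The hard part will be promoting local to global existence, i.e.\ showing $T_{\max}=\infty$ \emph{for every mass} $M>0$, in sharp contrast with the limiting system \eqref{aKS} where finite-time blowup occurs above $4\pi(1+\theta)D$. The mechanism preventing aggregation here is structural: the chemoattractant $w_\gamma$ is produced only by the non-diffusive species $v_\gamma$, so its growth is delayed relative to $u_\gamma$, and this indirect coupling is exactly what regularizes the dynamics. To capture it quantitatively I would construct the Liapunov functional alluded to in the text, combining the entropy $\int_\Omega u_\gamma\log u_\gamma$, the cross term $-\int_\Omega u_\gamma w_\gamma$, and a quadratic functional of $w_\gamma$ controlling $\|\nabla w_\gamma\|_2^2+\alpha\|w_\gamma\|_2^2$, together with a contribution accounting for the $v_\gamma$-reservoir; its dissipation, estimated with the Moser--Trudinger inequality recalled in \Cref{sec:mti}, furnishes a time-local bound on $\|u_\gamma\|_{L\log L}$ that does not degenerate in finite time. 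Parabolic bootstrapping then upgrades this to uniform-in-time $L_\infty$ and $W^2_3$ bounds on $u_\gamma$ and $w_\gamma$, excluding blowup. As this global estimate is precisely the content of \cite[Theorem~1.1]{LaSt2021}, I would in the end invoke that result for the delicate step, the preceding construction serving to recast it in the scale \eqref{in1}--\eqref{in2} used throughout the present work.
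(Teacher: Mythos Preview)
Your proposal is essentially aligned with the paper's treatment: the paper does not prove \Cref{prop:wp} in detail but simply recalls it from \cite[Theorem~1.1]{LaSt2021}, noting that local well-posedness rests on Amann's theory \cite{Ama1991a,Ama1991b} for partially diffusive parabolic systems and that global existence follows from a priori estimates. Your sketch of local existence, non-negativity, and mass conservation is correct and more explicit than what the paper writes.

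There is, however, a misattribution in your global-existence step. You invoke the Liapunov functional together with the Moser--Trudinger inequality to obtain an $L\log L$ bound ``that does not degenerate in finite time'' for \emph{every} mass $M>0$. But Moser--Trudinger is intrinsically a subcritical tool: in the present paper it appears only in \Cref{sec:ge} to derive $\gamma$-independent bounds under the constraint $M<4\pi(1+\theta)D$, see \Cref{lem:ge1} and \Cref{lem:ge2}. The unconditional global existence in \cite{LaSt2021} for arbitrary $M$ does not proceed via Moser--Trudinger; the paper explicitly singles out the Gagliardo--Nirenberg inequality~\eqref{eq:GN} as the relevant functional inequality for the global bounds. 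The structural mechanism you describe---indirect signal production through the non-diffusive reservoir $v_\gamma$---is indeed what prevents blowup, but it is exploited through $L_p$-energy estimates and~\eqref{eq:GN} rather than through the entropy/Moser--Trudinger route. Since you ultimately defer to \cite[Theorem~1.1]{LaSt2021} for this step, the conclusion is unaffected, but the heuristic you give for why global existence holds at supercritical mass is not the right one.
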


The local well-posedness of~\eqref{aPL}--\eqref{iPL} relies on the abstract theory developed in \cite{Ama1991a, Ama1991b} for partially diffusive parabolic systems. It is however likely that a larger set of initial data may be handled as well, in light of the recent result in \cite[Section~5.2]{MaWa2023}. Global existence is a consequence from several estimates and makes in particular use of the following Gagliardo-Nirenberg inequality
\begin{equation}
	\|z\|_4 \le c_0 \|z\|_{W_2^1}^{1/2} \|z\|_2^{1/2}, \qquad z\in W_2^1(\Omega), \label{eq:GN}
\end{equation} 
which is also an important tool in the forthcoming analysis.

\section{Convergence}\label{sec:cv}

We begin with the convergence issue and assume that we have already obtained several bounds on the family $(u_\gamma,v_\gamma,w_\gamma)_{\gamma\ge 1}$, which do not depend on $\gamma\ge 1$ and are valid on a time interval $[0,T]$. We shall devote the next sections to the derivation of these bounds.

\begin{proposition}\label{prop:bd}
	Let $M>0$ and consider $(u_0,v_0,w_0) \in W_{3,+}^1(\Omega;\mathbb{R}^3)$ satisfying~\eqref{mass0}. For $\gamma>0$, we denote the solution to \eqref{aPL}--\eqref{iPL} given by \Cref{prop:wp} by $(u_\gamma,v_\gamma,w_\gamma)$ and set 
	\begin{equation}
		n_\gamma := u_\gamma + v_\gamma. \label{eq:ng}
	\end{equation} 
	Assume that there are $T \in (0,\infty)$ and $c_1>0$ such that, for any $\gamma\ge 1$, 
\begin{subequations}\label{ubd}
	\begin{align}
		& \sup_{t\in [0,T]} \big\{ \|u_\gamma(t)\|_2 + \|v_\gamma(t)\|_2  + \|w_\gamma(t)\|_{W_2^1} \big\} \le c_1, \label{ubd1} \\
		& \int_0^T \big[ \|\nabla u_\gamma(t)\|_2^2 + \|\partial_t w_\gamma(t)\|_2^2 + \gamma \|e_\gamma(t)\|_2^2  \big]\ \mathrm{d}t \le c_1^2, \label{ubd2}
	\end{align}
\end{subequations}
with
\begin{equation}
	e_\gamma := \theta v_\gamma - u_\gamma. \label{eq:eg}
\end{equation}
Then there are a sequence ($\gamma_j)_{j\ge 1}$, $\gamma_j\to\infty$, and non-negative functions
\begin{align*}
	& n\in W_2^1((0,T),W_2^1(\Omega)') \cap L_\infty((0,T),L_2(\Omega))\cap L_2((0,T),W_2^1(\Omega)), \\
	& w\in W_2^1((0,T),L_2(\Omega))\cap L_\infty((0,T),W_2^1(\Omega))\cap  L_2((0,T),W_2^2(\Omega)),
\end{align*}
such that
\begin{align}
	& \lim_{j\to\infty} \sup_{t\in [0,T]} \big\{ \| (n_{\gamma_j} -n)(t)\|_{(W_2^1)'} + \|(w_{\gamma_j} - w)(t) \|_{2} \big\} = 0, \label{eq:cvs}\\
	& \lim_{j\to\infty} \int_0^T \big[ \|(w_{\gamma_j}- w)(t)\|_{W_2^1}^2 + \|(n_{\gamma_j}-n)(t)\|_2^2 + \|e_{\gamma_j}(t)\|_2^2 \big]\ \mathrm{d}t = 0. \label{eq:cvi1}
\end{align}
In addition,
\begin{equation}
	\lim_{j\to\infty} \int_0^T \left[ \left\| \left( u_{\gamma_j} - \frac{\theta n}{1+ \theta} \right)(t) \right\|_2^2 + \left\| \left( v_{\gamma_j} - \frac{n}{1+ \theta} \right)(t) \right\|_2^2 \right]\ \mathrm{d}t = 0 \label{eq:cvi2}
\end{equation}
and
\begin{equation}
	\lim_{j\to\infty} \int_0^T \left\| \left( u_{\gamma_j}\nabla w_{\gamma_j} - \frac{\theta}{1+\theta} n \nabla w \right)(t) \right\|_{2}^{4/3}\ \mathrm{d}t = 0. \label{eq:cvi3}
\end{equation}
\end{proposition}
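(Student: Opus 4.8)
The plan is to use the uniform bounds \eqref{ubd} to extract, via standard compactness arguments, a limit $(n,w)$ together with the convergences \eqref{eq:cvs}--\eqref{eq:cvi3}. First, I would pass the equation \eqref{a3PL} into a form amenable to parabolic regularity: writing it as $\partial_t w_\gamma = D\Delta w_\gamma - \alpha w_\gamma + v_\gamma$ and using \eqref{ubd1} to control $\|v_\gamma\|_{L_\infty((0,T),L_2)}$, maximal $L_2$-regularity for the heat semigroup with Neumann boundary conditions yields a $\gamma$-independent bound for $w_\gamma$ in $L_2((0,T),W_2^2(\Omega))$, while \eqref{ubd2} directly supplies the bound on $\partial_t w_\gamma$ in $L_2((0,T),L_2(\Omega))$ and \eqref{ubd1} the bound in $L_\infty((0,T),W_2^1(\Omega))$. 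An Aubin--Lions--Simon argument (with, say, $W_2^2 \hookrightarrow W_2^1$ compactly and $W_2^1 \hookrightarrow L_2$) then gives a subsequence along which $w_{\gamma_j} \to w$ strongly in $C([0,T],L_2(\Omega))$ and in $L_2((0,T),W_2^1(\Omega))$, with $w$ in the asserted space and $w\ge 0$ by passing the sign to the limit. Because $(n_\gamma)$ is bounded in $L_\infty((0,T),L_2(\Omega))$, weak-$*$ convergence $n_{\gamma_j}\rightharpoonup n$ holds in that space; to identify $\partial_t n$ and get the $(W_2^1)'$-compactness, I would note that $n_\gamma$ solves $\partial_t n_\gamma = \mathrm{div}(\nabla u_\gamma - u_\gamma\nabla w_\gamma)$, and that the flux $\nabla u_\gamma - u_\gamma\nabla w_\gamma$ is bounded in $L_{4/3}((0,T)\times\Omega)$ by Hölder, \eqref{ubd1}, \eqref{ubd2}, and \eqref{eq:GN} applied to $u_\gamma$ (controlling $\|u_\gamma\|_{L^4_{t,x}}$ from $\|u_\gamma\|_{L^\infty_t L^2_x}$ and $\|\nabla u_\gamma\|_{L^2_{t,x}}$); hence $\partial_t n_\gamma$ is bounded in $L_{4/3}((0,T),(W_3^1)')$, and Aubin--Lions with the compact embedding $L_2(\Omega)\hookrightarrow (W_2^1(\Omega))'$ yields $n_{\gamma_j}\to n$ in $C([0,T],(W_2^1)')$, giving \eqref{eq:cvs}.

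The next ingredient is the strong $L_2$-convergence \eqref{eq:cvi1}. The bound $\gamma\|e_\gamma\|_{L_2((0,T)\times\Omega)}^2\le c_1^2$ from \eqref{ubd2} forces $\|e_\gamma\|_{L_2((0,T)\times\Omega)}\to 0$, i.e. $u_\gamma - \theta v_\gamma \to 0$ strongly in $L_2((0,T)\times\Omega)$; combined with $n_\gamma = u_\gamma + v_\gamma$ this gives $v_\gamma = (n_\gamma - e_\gamma)/(1+\theta)$ and $u_\gamma = (\theta n_\gamma + e_\gamma)/(1+\theta)$, so \eqref{eq:cvi2} will follow once $n_{\gamma_j}\to n$ strongly in $L_2((0,T)\times\Omega)$. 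To upgrade the weak convergence of $n_{\gamma_j}$ to strong $L_2((0,T),L_2)$-convergence in \eqref{eq:cvi1}, I would combine the already-established strong convergence in $C([0,T],(W_2^1)')$ with the uniform bound in $L_2((0,T),W_2^1(\Omega))$ coming from $\|\nabla u_\gamma\|_{L_2((0,T)\times\Omega)}\le c_1$ together with $\|\nabla v_\gamma\|_{L_2}$ — here the only $v_\gamma$-gradient information comes from $\nabla v_\gamma = (\nabla n_\gamma - \nabla e_\gamma)/(1+\theta)$ needing $\nabla e_\gamma\in L_2$, which is not among the assumed bounds, so instead I would work with $\nabla u_\gamma$ alone: since $n_\gamma-(1+\theta)v_\gamma = e_\gamma \to 0$ in $L_2$ and $u_\gamma$ is bounded in $L_2((0,T),W_2^1)$, one has $n_\gamma = u_\gamma + v_\gamma$ with $v_\gamma$ close to $n_\gamma/(1+\theta)$, hence $\theta n_\gamma/(1+\theta)$ close to $u_\gamma$ in $L_2$; this makes $n_\gamma$ close in $L_2((0,T),W_2^1)$ to $(1+\theta)u_\gamma/\theta$ up to an $L_2$-error tending to $0$, so $(n_\gamma)$ is, modulo a vanishing perturbation, bounded in $L_2((0,T),W_2^1)$ and precompact in $L_2((0,T),L_2)$ by Aubin--Lions (the time-derivative bound in $(W_3^1)'$ suffices). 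Passing to a further subsequence identifies the limit as $n$, establishes \eqref{eq:cvi1} and \eqref{eq:cvi2}, and shows $n\in L_2((0,T),W_2^1(\Omega))$; the bound $n\in W_2^1((0,T),(W_2^1)')$ follows from lower semicontinuity once we know $\partial_t n \in L_2((0,T),(W_2^1)')$, which in turn uses that the limit flux $\tfrac{\theta}{1+\theta}n\nabla w$ lies in $L_2((0,T),L_1)$ — and in fact better, in $L_{4/3}$ — but to reach $L_2((0,T),(W_2^1)')$ one uses $n\in L_\infty L_2$, $\nabla w\in L_2((0,T),L_\infty)$ (from $w\in L_2((0,T),W_2^2)$ and Sobolev embedding in 2D, borderline — so more carefully $L_2((0,T),L_p)$ for all $p<\infty$), giving $n\nabla w\in L_2((0,T),L_q)$ for suitable $q>1$ and hence $\partial_t n$ in a dual space; I will state the regularity exactly as in the Proposition and verify it with these Hölder exponents.

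For \eqref{eq:cvi3} I would write
\begin{align*}
u_{\gamma_j}\nabla w_{\gamma_j} - \frac{\theta}{1+\theta} n\nabla w
&= \left(u_{\gamma_j} - \frac{\theta n}{1+\theta}\right)\nabla w_{\gamma_j}
+ \frac{\theta n}{1+\theta}\nabla(w_{\gamma_j} - w).
\end{align*}
The first term is estimated in $L_{4/3}((0,T)\times\Omega)$ by Hölder with exponents $(2,4)$: $\|u_{\gamma_j}-\theta n/(1+\theta)\|_{L_2((0,T)\times\Omega)}\to 0$ by \eqref{eq:cvi2} while $\nabla w_{\gamma_j}$ is bounded in $L_4((0,T)\times\Omega)$ via \eqref{eq:GN} applied to the components of $\nabla w_\gamma$ together with the bounds on $w_\gamma$ in $L_\infty((0,T),W_2^1)$ and $L_2((0,T),W_2^2)$. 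For the second term, $\tfrac{\theta n}{1+\theta}$ is bounded in $L_\infty((0,T),L_2(\Omega))$ (indeed in $L_2((0,T),W_2^1)$) and $\nabla(w_{\gamma_j}-w)\to 0$ in $L_2((0,T)\times\Omega)$ by \eqref{eq:cvi1}, so again Hölder with exponents $(2,4)$ — using $n\in L_2((0,T),L_4)$ via \eqref{eq:GN} — shows this term tends to $0$ in $L_{4/3}((0,T)\times\Omega)$, and a fortiori in $L_{4/3}((0,T),L_2)$ after interpolation, giving \eqref{eq:cvi3}. The main obstacle in the whole argument is the strong $L_2((0,T),L_2)$-convergence of $n_{\gamma_j}$ in \eqref{eq:cvi1}: since $v_\gamma$ carries no spatial regularity, one cannot directly apply Aubin--Lions to $n_\gamma$ with an $L_2((0,T),W_2^1)$-bound, and the key trick is to exploit $\gamma\|e_\gamma\|_2^2\le c_1^2$ to replace $n_\gamma$ by $\tfrac{1+\theta}{\theta}u_\gamma$ up to an $L_2$-error that vanishes, thereby transferring the missing compactness for $v_\gamma$ onto $u_\gamma$, for which the gradient bound \eqref{ubd2} is available.
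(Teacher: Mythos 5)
Your route is essentially the paper's: parabolic regularity and Aubin--Lions for $(w_\gamma)$, compactness of $(n_\gamma)$ in $C([0,T],W_2^1(\Omega)')$ from the flux bound, identification of $u=\theta n/(1+\theta)$ and $v=n/(1+\theta)$ through $e_\gamma\to0$, and the transfer of spatial compactness from $u_\gamma$ to $n_\gamma$ via $n_\gamma=\frac{1+\theta}{\theta}u_\gamma+\frac{1}{\theta}e_\gamma$ --- the last point being exactly the paper's central trick. However, the decisive step is not correct as written: the strong $L_2((0,T)\times\Omega)$-convergence of $(n_{\gamma_j})$ cannot be obtained ``by Aubin--Lions applied to $n_\gamma$ modulo a vanishing perturbation''. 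The sequence carrying the spatial bound, $\frac{1+\theta}{\theta}u_\gamma$, has no uniform time-derivative bound (by \eqref{a1PL} its time derivative contains $\gamma e_\gamma$, which \eqref{ubd2} only controls by $c_1\sqrt{\gamma}$ in $L_2$), while the sequence carrying the time-derivative bound, $n_\gamma$, has no spatial bound; the lemma needs both for the \emph{same} sequence. The paper instead performs the interpolation by hand with Ehrling's lemma (\Cref{lem:jll}): $\big\|u_{\gamma_j}-\frac{\theta n}{1+\theta}\big\|_2\le\eta\big\|u_{\gamma_j}-\frac{\theta n}{1+\theta}\big\|_{W_2^1}+c_3(\eta)\big\|u_{\gamma_j}-\frac{\theta n}{1+\theta}\big\|_{(W_2^1)'}$, where the first term stays bounded and the second vanishes thanks to the already established convergence in $C([0,T],W_2^1(\Omega)')$ and $e_{\gamma_j}\to0$ in $L_2$, and then lets $\eta\to0$. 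You have the right idea, but this step must be spelled out; it is precisely the difficulty the paper isolates.

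A second, smaller gap concerns~\eqref{eq:cvi3}: estimating $\big(u_{\gamma_j}-\frac{\theta n}{1+\theta}\big)\nabla w_{\gamma_j}$ in $L_{4/3}((0,T)\times\Omega)$ by H\"older with exponents $(2,4)$ in space-time does not yield convergence in $L_{4/3}((0,T),L_2(\Omega))$, which is the norm in~\eqref{eq:cvi3}; your ``a fortiori after interpolation'' goes the wrong way, since $L_2(\Omega)$ is stronger than $L_{4/3}(\Omega)$ and boundedness in $L_2$ of space-time plus convergence in $L_{4/3}$ of space-time does not force the $L_2$-in-space norm to vanish. The repair is the one the paper uses and you already cite the tool: upgrade $u_{\gamma_j}-\frac{\theta n}{1+\theta}\to0$ from $L_2((0,T)\times\Omega)$ to $L_2((0,T),L_4(\Omega))$ via \eqref{eq:GN} and the $L_2((0,T),W_2^1(\Omega))$-bound, then apply H\"older with exponents $(4,4)$ in space and $(3/2,3)$ in time against $\|\nabla w_{\gamma_j}\|_{L_4((0,T)\times\Omega)}$; the same upgrade is needed for $\nabla(w_{\gamma_j}-w)$ in the second piece of your decomposition. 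Relatedly, the flux $\nabla u_\gamma-u_\gamma\nabla w_\gamma$ is uniformly bounded in $L_2((0,T),L_2(\Omega))$, not merely $L_{4/3}$, because both factors of $u_\gamma\nabla w_\gamma$ lie in $L_4((0,T)\times\Omega)$ by \eqref{eq:GN}; using this gives $\partial_t n_\gamma$ bounded in $L_2((0,T),W_2^1(\Omega)')$ and hence $n\in W_2^1((0,T),W_2^1(\Omega)')$ by weak lower semicontinuity, making your borderline Sobolev detour for $\nabla w$ unnecessary.
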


\begin{remark}\label{rem:re}
	At first glance, the estimate on $(w_\gamma)_{\gamma\ge 1}$ in $L_\infty((0,T),W_2^1(\Omega))$ in~\eqref{ubd1} is a straightforward consequence of~\eqref{a3PL}, parabolic regularity, and the estimate on $(v_\gamma)_{\gamma\ge 1}$ in $L_\infty((0,T),L_2(\Omega))$ in~\eqref{ubd1}, and could thus be removed from the assumption~\eqref{ubd1}. We have however chosen to include it in~\eqref{ubd1}, since it is actually (a part of) the first step of the proof of the validity of~\eqref{ubd1} performed in \Cref{prop:ue0}, see \Cref{lem:ste1} and \Cref{lem:ge2}. The boundedness of $(u_\gamma)_{\gamma\ge 1}$ and $(v_\gamma)_{\gamma\ge 1}$ in $L_\infty((0,T),L_2(\Omega))$ is actually derived afterwards, see the proof of \Cref{prop:ue0}.
\end{remark}

In the following, $c$ and $(c_i)_{i\ge 2}$ denote positive constants depending only on $\Omega$, $\theta$, $D$, $\alpha$, $(u_0,v_0,w_0)$, and $c_1$. The dependence upon additional parameters will be indicated explicitly. 

Several additional estimates can be derived from~\eqref{ubd}. 

\begin{lemma}\label{lem:bd}
	Assume~\eqref{ubd}. There is $c_2(T)>0$ such that, for all $\gamma\ge 1$,
	\begin{align}
		\int_0^T \big[  \|w_\gamma(t)\|_{W_2^2}^2 + \|u_\gamma(t)\|_4^4 + \|\nabla w_\gamma(t)\|_4^4  + \|(u_\gamma \nabla w_\gamma)(t)\|_2^2 \big]\ \mathrm{d}t & \le c_2^2(T), \label{ubd3} \\
		\int_0^T \|\partial_t n_\gamma(t)\|_{(W_2^1)'}^2\ \mathrm{d}t & \le c_2^2(T), \label{ubd4}
	\end{align}
	recalling that $n_\gamma=u_\gamma+v_\gamma$, see~\eqref{eq:ng}.
\end{lemma}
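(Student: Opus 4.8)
\textbf{Proof plan for \Cref{lem:bd}.}
The plan is to extract all the listed estimates from the bounds~\eqref{ubd} by combining parabolic regularity for the $w_\gamma$-equation with the Gagliardo-Nirenberg inequality~\eqref{eq:GN}, and then to read off the bound on $\partial_t n_\gamma$ directly from the total-mass equation. I would proceed in the order in which the quantities depend on one another.

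First I would handle $w_\gamma$. Viewing~\eqref{a3PL} as $\partial_t w_\gamma - D\Delta w_\gamma + \alpha w_\gamma = v_\gamma$ with homogeneous Neumann data, maximal $L_2$-regularity for the Neumann Laplacian gives
\begin{equation*}
	\|w_\gamma\|_{L_2((0,T),W_2^2)}^2 \le c\big( \|w_0\|_{W_2^1}^2 + \|v_\gamma\|_{L_2((0,T),L_2)}^2 \big) \le c(T),
\end{equation*}
the last step using~\eqref{ubd1}. Applying~\eqref{eq:GN} to $z = \nabla w_\gamma$ (componentwise, together with the trace/embedding controlling $\nabla w_\gamma \cdot \mathbf n$ on $\partial\Omega$ so that $\|\nabla w_\gamma\|_{W_2^1} \le c\|w_\gamma\|_{W_2^2}$) yields
\begin{equation*}
	\int_0^T \|\nabla w_\gamma(t)\|_4^4\ \mathrm{d}t \le c_0^4 \int_0^T \|\nabla w_\gamma(t)\|_{W_2^1}^2 \|\nabla w_\gamma(t)\|_2^2\ \mathrm{d}t \le c_0^4 \Big( \sup_{[0,T]} \|w_\gamma\|_{W_2^1}^2 \Big) \|w_\gamma\|_{L_2((0,T),W_2^2)}^2 \le c(T),
\end{equation*}
again invoking~\eqref{ubd1} for the supremum. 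Similarly, applying~\eqref{eq:GN} to $z=u_\gamma$ and using $\|u_\gamma\|_{W_2^1}^2 \le c(\|\nabla u_\gamma\|_2^2 + \|u_\gamma\|_2^2)$ together with~\eqref{ubd1} and~\eqref{ubd2} gives
\begin{equation*}
	\int_0^T \|u_\gamma(t)\|_4^4\ \mathrm{d}t \le c_0^4 \Big( \sup_{[0,T]} \|u_\gamma\|_2^2 \Big) \int_0^T \|u_\gamma(t)\|_{W_2^1}^2\ \mathrm{d}t \le c(T).
\end{equation*}
The bound on $\|u_\gamma \nabla w_\gamma\|_{L_2((0,T),L_2)}^2$ then follows from Hölder's inequality, $\|u_\gamma \nabla w_\gamma\|_2^2 \le \|u_\gamma\|_4^2 \|\nabla w_\gamma\|_4^2$, and the two preceding time integrals after one more Cauchy-Schwarz step in time. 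This establishes~\eqref{ubd3}.

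For~\eqref{ubd4} I would use that $n_\gamma = u_\gamma + v_\gamma$ solves $\partial_t n_\gamma = \mathrm{div}(\nabla u_\gamma - u_\gamma \nabla w_\gamma)$ weakly with no-flux boundary conditions, so that for $\varphi \in W_2^1(\Omega)$,
\begin{equation*}
	|\langle \partial_t n_\gamma, \varphi\rangle| = \Big| \int_\Omega (\nabla u_\gamma - u_\gamma \nabla w_\gamma) \cdot \nabla\varphi\ \mathrm{d}x \Big| \le \big( \|\nabla u_\gamma\|_2 + \|u_\gamma \nabla w_\gamma\|_2 \big) \|\nabla\varphi\|_2,
\end{equation*}
whence $\|\partial_t n_\gamma\|_{(W_2^1)'} \le \|\nabla u_\gamma\|_2 + \|u_\gamma\nabla w_\gamma\|_2$. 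Squaring and integrating over $(0,T)$, the first term is controlled by~\eqref{ubd2} and the second by the part of~\eqref{ubd3} just proved, giving~\eqref{ubd4}.

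The only genuinely delicate points are bookkeeping issues rather than conceptual ones: making sure the Neumann boundary condition lets one bound $\|\nabla w_\gamma\|_{W_2^1}$ by $\|w_\gamma\|_{W_2^2}$ so that~\eqref{eq:GN} applies to $\nabla w_\gamma$, and checking that the constants produced by maximal regularity and by~\eqref{eq:GN} are indeed independent of $\gamma$ — which they are, since $\gamma$ enters~\eqref{a3PL} only through $v_\gamma$, already controlled uniformly by~\eqref{ubd1}. I expect the small-time/global-mass case distinction to be irrelevant here, as the present lemma takes~\eqref{ubd} as a hypothesis; the real work of proving~\eqref{ubd} is deferred to \Cref{prop:ue0}.
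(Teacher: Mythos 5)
Your proposal is correct and follows essentially the same route as the paper: parabolic $L_2$-regularity for~\eqref{a3PL}, the Gagliardo--Nirenberg inequality~\eqref{eq:GN} applied to $\nabla w_\gamma$ and to $u_\gamma$, H\"older's inequality for the product term, and the duality estimate for $\partial_t n_\gamma$. The boundary-condition worry about applying~\eqref{eq:GN} to $\nabla w_\gamma$ is moot, since $\|\nabla w_\gamma\|_{W_2^1}\le \|w_\gamma\|_{W_2^2}$ holds by definition of the norms and~\eqref{eq:GN} requires no boundary condition.
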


\begin{proof}
	It first readily follows from~\eqref{a3PL}, \eqref{ubd}, and parabolic regularity that
	\begin{equation}
		\int_0^T \|w_\gamma(t)\|_{W_2^2}^2\ \mathrm{d}t \le c(T). \label{ubd5}
	\end{equation}
	We next combine~\eqref{eq:GN} with~\eqref{ubd1} and~\eqref{ubd5} to find
	\begin{align}
		\int_0^T \|\nabla w_\gamma(t)\|_4^4\ \mathrm{d}t & \le c_0^4 \int_0^T \|\nabla w_\gamma(t)\|_{W_2^1}^2 \|\nabla w_\gamma(t)\|_{2}^2\ \mathrm{d}t \nonumber \\
		& \le c_0^4 c_1^2 	\int_0^T \|w_\gamma(t)\|_{W_2^2}^2\ \mathrm{d}t \le c(T). \label{ubd6}
	\end{align}
	Similarly, we infer from~\eqref{eq:GN} and~\eqref{ubd} that
	\begin{align}
		\int_0^T \|u_\gamma(t)\|_4^4\ \mathrm{d}t & \le c_0^4 \int_0^T \|u_\gamma(t)\|_{W_2^1}^2 \|u_\gamma(t)\|_{2}^2\ \mathrm{d}t \nonumber \\
		& \le c_0^4 c_1^2 \left( c_1^2 T + \int_0^T \|\nabla u_\gamma(t)\|_{2}^2\ \mathrm{d}t \right) \le c(T). \label{ubd7}
	\end{align}
	Gathering~\eqref{ubd5}, \eqref{ubd6}, and~\eqref{ubd7} and using H\"older's inequality lead us to~\eqref{ubd3}.
	
	We next consider $\varphi\in W_2^1(\Omega)$ and deduce from~\eqref{a1PL}, \eqref{a2PL}, \eqref{bPL}, and H\"older's inequality that
	\begin{equation*}
		\big| \langle \partial_t n_\gamma , \varphi \rangle_{(W_2^1)',W_2^1} \big| = \left| \int_\Omega  \nabla\varphi\cdot \big( \nabla u_\gamma - u_\gamma \nabla w_\gamma \big)\ \mathrm{d}x \right| \le \|\nabla\varphi\|_2 \big( \|\nabla u_\gamma\|_2 + \|u_\gamma \nabla w_\gamma\|_2 \big),
	\end{equation*}
	so that
	\begin{equation*}
		\|\partial_t n_\gamma \|_{(W_2^1)'} \le  \|\nabla u_\gamma\|_2 + \|u_\gamma \nabla w_\gamma\|_2
	\end{equation*}
	by a duality argument. Combining the above inequality with~\eqref{ubd2} and~\eqref{ubd3} readily gives~\eqref{ubd4} and completes the proof. 
\end{proof}

We are now ready to investigate the compactness properties of the family $(u_\gamma,v_\gamma,w_\gamma)_{\cma{\gamma\ge 1}}$. At this point, we emphasize that we are not in a position to apply directly Aubin-Lions' lemma \cite[Corollary~4]{Sim1987} to either $(n_\gamma)_{\gamma\ge 1}$ or $(v_\gamma)_{\gamma\ge 1}$, as no compactness with respect to the space variable is available, according to~\eqref{ubd1} and~\eqref{ubd4}.  Furthermore, in light of~\eqref{ubd1} and~\eqref{ubd2}, there is no information on the behaviour with respect to time of $(u_\gamma)_{\gamma\ge 1}$, which also prevents us from a direct use of \cite[Corollary~4]{Sim1987}. In order to exploit the above mentioned ``partial'' compactness properties, we make use of the following lemma \cite[Chapitre~I, Lemme~5.2]{Lio1969}, also known as Ehrling's lemma \cite[Eq.~(7)]{Ehr1954}, which is a consequence of the compactness of the embedding of $W_2^1(\Omega)$ in $L_2(\Omega)$ and the continuity of that of $L_2(\Omega)$ in $W_2^1(\Omega)'$, see also \cite[Lemma~8]{Sim1987}.

\begin{lemma}\label{lem:jll}
	Given $\eta>0$, there is $c_3(\eta)>0$ such that
	\begin{equation*}
		\|z\|_2 \le \eta \|z\|_{W_2^1} + c_3(\eta) \|z\|_{(W_2^1)'}, \qquad z\in W_2^1(\Omega).
	\end{equation*}
\end{lemma}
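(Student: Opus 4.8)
The plan is to argue by contradiction using the compactness of the embedding $W_2^1(\Omega)\hookrightarrow L_2(\Omega)$ together with the continuity of $L_2(\Omega)\hookrightarrow W_2^1(\Omega)'$. Suppose the claimed inequality fails for some $\eta>0$. Then for each $k\ge 1$ there is $z_k\in W_2^1(\Omega)$ with
\begin{equation*}
	\|z_k\|_2 > \eta \|z_k\|_{W_2^1} + k \|z_k\|_{(W_2^1)'}.
\end{equation*}
Since the inequality is homogeneous of degree one in $z_k$, I may normalize $\|z_k\|_2 = 1$ for all $k$. The defining inequality then forces $\|z_k\|_{W_2^1} < 1/\eta$ and $\|z_k\|_{(W_2^1)'} < 1/k$ for every $k\ge 1$; in particular $\|z_k\|_{(W_2^1)'}\to 0$ as $k\to\infty$.

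Next I would extract a convergent subsequence. As $(z_k)_k$ is bounded in $W_2^1(\Omega)$, reflexivity of $W_2^1(\Omega)$ gives a subsequence (not relabelled) and some $z\in W_2^1(\Omega)$ with $z_k\rightharpoonup z$ in $W_2^1(\Omega)$. The compactness of the embedding $W_2^1(\Omega)\hookrightarrow L_2(\Omega)$ then yields $z_k\to z$ strongly in $L_2(\Omega)$, hence $\|z\|_2 = \lim_{k\to\infty}\|z_k\|_2 = 1$, so $z\neq 0$. On the other hand, $z_k\to z$ in $L_2(\Omega)$ implies $z_k\to z$ in $W_2^1(\Omega)'$ by the continuity of the embedding $L_2(\Omega)\hookrightarrow W_2^1(\Omega)'$, while we already know $z_k\to 0$ in $W_2^1(\Omega)'$. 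By uniqueness of limits, $z = 0$ in $W_2^1(\Omega)'$, and since $L_2(\Omega)\hookrightarrow W_2^1(\Omega)'$ is injective (indeed a continuous embedding), this gives $z=0$ in $L_2(\Omega)$, contradicting $\|z\|_2 = 1$. Hence the inequality holds with some finite constant $c_3(\eta)>0$.

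I do not expect any genuine obstacle here; the only points to be careful about are the homogeneity/normalization step (so that the two terms on the right scale correctly against $\|z_k\|_2$) and the fact that both embeddings $W_2^1(\Omega)\hookrightarrow L_2(\Omega)$ and $L_2(\Omega)\hookrightarrow W_2^1(\Omega)'$ are available on the bounded smooth domain $\Omega$, the first being compact (Rellich--Kondrachov) and the second continuous. Alternatively, one could cite \cite[Chapitre~I, Lemme~5.2]{Lio1969} or \cite[Lemma~8]{Sim1987} directly, as the statement already indicates, in which case no proof is needed at all.
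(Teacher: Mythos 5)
Your argument is correct: the paper does not prove this lemma but simply cites \cite[Chapitre~I, Lemme~5.2]{Lio1969}, \cite[Eq.~(7)]{Ehr1954}, and \cite[Lemma~8]{Sim1987}, noting that the statement follows from the compactness of $W_2^1(\Omega)\hookrightarrow L_2(\Omega)$ and the continuity of $L_2(\Omega)\hookrightarrow W_2^1(\Omega)'$ --- precisely the two ingredients your normalization-and-contradiction argument uses. This is the standard proof underlying those references, so there is nothing further to add.
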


\begin{proposition}\label{prop:cv}
	Assume~\eqref{ubd}. There are a sequence $(\gamma_j)_{j\ge 1}$, $\gamma_j\to\infty$, and non-negative functions
	\begin{align*}
		& n\in W_2^1((0,T),W_2^1(\Omega)')\cap L_\infty((0,T),L_2(\Omega))\cap L_2((0,T),W_2^1(\Omega)), \\
		& w\in W_2^1((0,T),L_2(\Omega))\cap L_\infty((0,T),W_2^1(\Omega))\cap L_2((0,T),W_2^2(\Omega)),
	\end{align*}
	such that the convergences~\eqref{eq:cvs}, \eqref{eq:cvi1}, and~\eqref{eq:cvi2} hold true; that is,
	\begin{align*}
		& \lim_{j\to\infty} \sup_{t\in [0,T]} \big\{ \|n_{\gamma_j}(t)-n(t)\|_{(W_2^1)'} + \|w_{\gamma_j}(t) - w(t) \|_{2} \big\} = 0, \\
		& \lim_{j\to\infty} \int_0^T \big[ \|w_{\gamma_j}(t)- w(t)\|_{W_2^1}^2 + \|n_{\gamma_j}(t)-n(t)\|_2^2 + \|e_{\gamma_j}(t)\|_2^2 \big]\ \mathrm{d}t = 0,
	\end{align*}
	and
	\begin{equation*}
	\lim_{j\to\infty} \int_0^T \left[ \left\| \left( u_{\gamma_j} - \frac{\theta n}{1+ \theta} \right)(t) \right\|_2^2 + \left\| \left( v_{\gamma_j} - \frac{n}{1+ \theta} \right)(t) \right\|_2^2 \right]\ \mathrm{d}t = 0. 
	\end{equation*}
\end{proposition}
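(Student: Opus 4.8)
The goal is to upgrade the weak/weak-* convergences that the bounds~\eqref{ubd} and \Cref{lem:bd} supply to the strong convergences asserted in \Cref{prop:cv}. The strategy is: (i) extract weakly convergent subsequences from the available uniform bounds; (ii) prove the strong convergence of $(n_\gamma)_\gamma$ in $L_2((0,T)\times\Omega)$ via the Ehrling-type inequality of \Cref{lem:jll}; (iii) use the relation $u_\gamma-\theta v_\gamma = e_\gamma\to 0$ in $L_2$ together with $n_\gamma=u_\gamma+v_\gamma$ to identify the limits of $u_\gamma$ and $v_\gamma$ as $\theta n/(1+\theta)$ and $n/(1+\theta)$ and to conclude the strong $L_2$-convergence in~\eqref{eq:cvi2}; and (iv) obtain the strong convergence of $(w_\gamma)_\gamma$ in $W_2^1$ from parabolic regularity for~\eqref{a3PL}.

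\textbf{Step (i): weak compactness.} From~\eqref{ubd1}, \eqref{ubd2}, \Cref{lem:bd}, and the reflexivity of the underlying spaces, we may pass to a subsequence $(\gamma_j)_j$ with $\gamma_j\to\infty$ along which $n_{\gamma_j}\rightharpoonup n$ weakly in $L_2((0,T),W_2^1(\Omega))$ and weak-* in $L_\infty((0,T),L_2(\Omega))$, with $\partial_t n_{\gamma_j}\rightharpoonup \partial_t n$ weakly in $L_2((0,T),W_2^1(\Omega)')$ by~\eqref{ubd4}; likewise $w_{\gamma_j}\rightharpoonup w$ weakly in $L_2((0,T),W_2^2(\Omega))$ and weak-* in $L_\infty((0,T),W_2^1(\Omega))$, with $\partial_t w_{\gamma_j}\rightharpoonup\partial_t w$ weakly in $L_2((0,T),L_2(\Omega))$. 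We also have $u_{\gamma_j}\rightharpoonup U$ weakly in $L_2((0,T),W_2^1(\Omega))$ and $v_{\gamma_j}\rightharpoonup V$ weak-* in $L_\infty((0,T),L_2(\Omega))$ for some limits $U,V$; since $\gamma\|e_\gamma\|_2^2$ is integrable and $\gamma_j\to\infty$, we get $e_{\gamma_j}\to 0$ strongly in $L_2((0,T)\times\Omega)$, hence $\theta V = U$, and combined with $U+V = n$ this forces $U = \theta n/(1+\theta)$, $V = n/(1+\theta)$. Non-negativity of the limits is inherited from that of the approximations. Finally, the classical Aubin--Lions--Simon compactness result \cite{Sim1987} applied to $(w_{\gamma_j})_j$ (bounded in $L_2((0,T),W_2^2(\Omega))$ with time derivative bounded in $L_2((0,T),L_2(\Omega))$) already yields $w_{\gamma_j}\to w$ strongly in $L_2((0,T),W_2^1(\Omega))$ and in $C([0,T],L_2(\Omega))$, which is~\eqref{cvw}-type information and the first half of~\eqref{eq:cvi1}.

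\textbf{Steps (ii)--(iv): the strong convergence of $n_\gamma$ and the identification.} The main obstacle is that neither $(n_\gamma)_j$ nor $(v_\gamma)_j$ enjoys compactness in the space variable, so \cite{Sim1987} cannot be applied to them directly. The remedy, following \Cref{lem:jll}, is the estimate $\|n_{\gamma_j}-n\|_2\le \eta\|n_{\gamma_j}-n\|_{W_2^1}+c_3(\eta)\|n_{\gamma_j}-n\|_{(W_2^1)'}$: integrating in time, the first term is controlled by $\eta\cdot(\text{uniform bound from~\eqref{ubd2}})$, while for the second term we first prove, using $\partial_t(n_{\gamma_j}-n)$ bounded in $L_2((0,T),W_2^1(\Omega)')$ together with $n_{\gamma_j}\rightharpoonup n$ in $L_2((0,T),L_2(\Omega))$ (so that, via the compact embedding $L_2(\Omega)\hookrightarrow W_2^1(\Omega)'$ and Aubin--Lions--Simon, $n_{\gamma_j}\to n$ strongly in $C([0,T],W_2^1(\Omega)')$), that $\sup_{[0,T]}\|n_{\gamma_j}-n\|_{(W_2^1)'}\to 0$; this is precisely the first half of~\eqref{eq:cvs}. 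Sending $j\to\infty$ and then $\eta\to 0$ gives $n_{\gamma_j}\to n$ in $L_2((0,T)\times\Omega)$, i.e.~the second term in~\eqref{eq:cvi1}. Now~\eqref{eq:cvi2} follows at once: $u_{\gamma_j}-\theta n/(1+\theta) = \theta(n_{\gamma_j}-n)/(1+\theta) + e_{\gamma_j}/(1+\theta)$ and $v_{\gamma_j}-n/(1+\theta) = (n_{\gamma_j}-n)/(1+\theta) - e_{\gamma_j}/(1+\theta)$, both of which tend to zero in $L_2((0,T)\times\Omega)$ by what precedes. It remains to promote the $L_2((0,T),W_2^1)$-convergence of $w_{\gamma_j}$ to the full strength of~\eqref{eq:cvi1} and~\eqref{cvw}: writing the Duhamel formula for~\eqref{a3PL}, or more simply testing the equation for $w_{\gamma_j}-w_{\gamma_k}$ with $-\Delta(w_{\gamma_j}-w_{\gamma_k})$ and using $v_{\gamma_j}\to n/(1+\theta)$ strongly in $L_2((0,T)\times\Omega)$ (which we have just established, since $v_{\gamma_j} = n_{\gamma_j} - u_{\gamma_j}$ and $u_{\gamma_j}\to \theta n/(1+\theta)$ in $L_2$), one gets $(w_{\gamma_j})_j$ Cauchy in $C([0,T],W_2^1(\Omega))$, which simultaneously yields the uniform-in-time convergence in~\eqref{eq:cvs} and the $W_2^1$-convergence in~\eqref{eq:cvi1}. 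This completes the verification of all the convergences claimed in \Cref{prop:cv}; the remaining assertions~\eqref{eq:cvi1} (the $\|e_{\gamma_j}\|_2$ term) and the regularity class memberships of $n$ and $w$ are immediate consequences of the weak limits recorded in Step (i) together with lower semicontinuity of norms.
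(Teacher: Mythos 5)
Your overall architecture (weak compactness, Aubin--Lions for $n_{\gamma_j}$ in $C([0,T],W_2^1(\Omega)')$ and for $w_{\gamma_j}$, identification of the limits of $u_{\gamma_j},v_{\gamma_j}$ via $e_{\gamma_j}\to 0$, and then Ehrling's lemma to upgrade to strong $L_2$-convergence) is the right one and coincides with the paper's. However, Step~(ii) contains a genuine gap at precisely the point that is the main technical difficulty of this proposition. You apply \Cref{lem:jll} to $n_{\gamma_j}-n$ and claim that the term $\eta\,\|n_{\gamma_j}-n\|_{W_2^1}$ is, after time integration, ``controlled by $\eta$ times a uniform bound from~\eqref{ubd2}''. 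This is false: \eqref{ubd2} bounds only $\int_0^T\|\nabla u_\gamma\|_2^2\,\mathrm{d}t$, whereas $\nabla n_{\gamma_j}=\nabla u_{\gamma_j}+\nabla v_{\gamma_j}$, and there is \emph{no} uniform-in-$\gamma$ bound on $\nabla v_\gamma$ in any space, because $v_\gamma$ solves the non-diffusive equation~\eqref{a2PL} and carries no spatial regularization. (This is exactly the obstruction the paper flags when it says no compactness in the space variable is available for $(v_\gamma)_\gamma$ or $(n_\gamma)_\gamma$.) As written, the coefficient of $\eta$ is not uniformly bounded in $j$, so sending $j\to\infty$ and then $\eta\to 0$ does not close the argument.

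The repair is the algebraic detour the paper takes: write
\begin{equation*}
	n_{\gamma_j}-n \;=\; \frac{1+\theta}{\theta}\left( u_{\gamma_j}-\frac{\theta n}{1+\theta}\right) + \frac{e_{\gamma_j}}{\theta},
\end{equation*}
and apply \Cref{lem:jll} to $u_{\gamma_j}-\theta n/(1+\theta)$ instead. This quantity \emph{is} bounded in $L_2((0,T),W_2^1(\Omega))$ uniformly in $j$ (by~\eqref{ubd1}, \eqref{ubd2}, and $n\in L_2((0,T),W_2^1(\Omega))$, the latter following from $n=(1+\theta)u/\theta$ and the weak limit $u$ of $(u_{\gamma_j})_j$), its $(W_2^1)'$-norm tends to zero by the first half of~\eqref{eq:cvs} together with $e_{\gamma_j}\to 0$, and the extra term $\|e_{\gamma_j}\|_2/\theta$ vanishes by~\eqref{ubd2}. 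With this substitution the rest of your argument (the derivation of~\eqref{eq:cvi2} from the decompositions of $u_{\gamma_j}-\theta n/(1+\theta)$ and $v_{\gamma_j}-n/(1+\theta)$, and the treatment of $w_{\gamma_j}$) goes through; note also that your final Cauchy-in-$C([0,T],W_2^1(\Omega))$ argument for $w_{\gamma_j}$ is more than \Cref{prop:cv} requires and, as you present it, relies on the strong $L_2$-convergence of $v_{\gamma_j}$, i.e.~on the very step that needs repairing.
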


\begin{proof}
	We first observe that a straightforward consequence of~\eqref{ubd2} is that
	\begin{equation}
		\lim_{\gamma\to\infty} \int_0^T \|e_\gamma(t)\|_2^2\ \mathrm{d}t = 0. \label{eq:sce}
	\end{equation}
	
	Next, since $(n_\gamma)_{\gamma\ge 1}$ is bounded in $L_\infty((0,T),L_2(\Omega))$ by~\eqref{eq:ng} and~\eqref{ubd1} and $L_2(\Omega)$ is compactly embedded in $W_2^1(\Omega)'$, we deduce from~\eqref{ubd4} and \cite[Corollary~4]{Sim1987} that 
	\begin{equation*}
		(n_\gamma)_{\gamma\ge 1} \;\text{ is relatively compact in }\; C([0,T],W_2^1(\Omega)'). \label{eq:rcn}
	\end{equation*}
	A similar argument based on~\eqref{ubd}, \eqref{ubd3}, and the compactness of the embeddings of $W_2^1(\Omega)$ in $L_2(\Omega)$ and of $W_2^2(\Omega)$ in $W_2^1(\Omega)$ entails that
	\begin{equation*}
		(w_\gamma)_{\gamma\ge 1} \;\text{ is relatively compact in }\; C([0,T],L_2(\Omega)) \;\text{ and in }\; L_2((0,T),W_2^1(\Omega)). \label{eq:rcw}
	\end{equation*}
	Consequently, there are a sequence $(\gamma_j)_{j\ge 1}$, $\gamma_j\to\infty$, and non-negative functions
	\begin{align*}
		& n\in W_2^1((0,T),W_2^1(\Omega)') \cap L_\infty((0,T),L_2(\Omega)), \\
		& w\in W_2^1((0,T),L_2(\Omega))\cap L_\infty((0,T),W_2^1(\Omega))\cap L_2((0,T),W_2^2(\Omega))
	\end{align*}
	such that the convergences~\eqref{eq:cvs} hold true, along with
	\begin{equation}
		\lim_{j\to\infty} \int_0^T \|w_{\gamma_j}(t)- w(t)\|_{W_2^1}^2\ \mathrm{d}t = 0. \label{eq:cvw1} 
	\end{equation}
	Moreover, it readily follows from~\eqref{ubd} and~\eqref{ubd3} that there are non-negative functions
	\begin{equation}
		u\in L_\infty((0,T),L_2(\Omega))\cap L_2((0,T),W_2^1(\Omega)) \;\text{ and }\; v\in L_\infty((0,T),L_2(\Omega)) \label{eq:reguv}
	\end{equation}
	such that, up to the extraction of a further subsequence,
	\begin{subequations}\label{eq:wcv}
	\begin{align}
		(u_{\gamma_j},v_{\gamma_j})_j & \stackrel{*}{\rightharpoonup} (u,v) \;\text{ in }\; L_\infty((0,T),L_2(\Omega;\mathbb{R}^2)), \label{eq:wcv1} \\
		(u_{\gamma_j})_j & \rightharpoonup u \;\text{ in }\; L_2((0,T),W_2^1(\Omega))\cap L_4((0,T)\times\Omega), \label{eq:wcv2} \\
		(w_{\gamma_j})_j & \rightharpoonup w \;\text{ in }\; L_2((0,T),W_2^2(\Omega)). \label{eq:wcv3}
	\end{align}
\end{subequations}
Since
\begin{equation*}
	u_{\gamma_j} = \frac{\theta n_{\gamma_j} - e_{\gamma_j}}{1+\theta} \;\text{ and }\; v_{\gamma_j} = \frac{n_{\gamma_j} + e_{\gamma_j}}{1+\theta}
\end{equation*}
by~\eqref{eq:ng} and~\eqref{eq:eg}, a first consequence of~\eqref{eq:cvs} and~\eqref{eq:sce} is that
\begin{equation}
	u = \frac{\theta n}{1+\theta} \;\text{ and }\; v = \frac{n}{1+\theta} \;\;\text{ a.e. in }\;\; (0,T)\times\Omega. \label{eq:uv}
\end{equation}
In particular, it follows from~\eqref{eq:reguv}, \eqref{eq:uv}, and the positivity of $\theta$  that $n\in L_2((0,T),W_2^1(\Omega))$ and we have thus established the regularity properties of $(n,w)$ listed in \Cref{prop:cv}.

We next turn to the proof of ~\eqref{eq:cvi1} and first recall that the convergences of $(w_{\gamma_j})_{j\ge 1}$ and $(e_{\gamma_j})_{j\ge 1}$ stated therein are already shown in~\eqref{eq:sce} and~\eqref{eq:cvw1}. We are thus left with the strong convergence of $(n_{\gamma_j})_{j\ge 1}$ which will follow from \Cref{lem:jll}. Indeed, for $\eta>0$, we infer from~\eqref{eq:ng}, \eqref{eq:eg}, and \Cref{lem:jll} that
\begin{align*}
	\|n_{\gamma_j} - n \|_2 & = \|u_{\gamma_j} + v_{\gamma_j} - n\|_2 = \left\| \frac{1+\theta}{\theta} u_{\gamma_j} - n + \frac{e_{\gamma_j}}{\theta} \right\|_2 \\
	& \le \frac{1+\theta}{\theta} \left\| u_{\gamma_j} - \frac{\theta n}{1+\theta} \right\|_2 + \frac{\|e_{\gamma_j}\|_2}{\theta} \\
	& \le \frac{(1+\theta)\eta}{\theta} \left\| u_{\gamma_j} - \frac{\theta n}{1+\theta} \right\|_{W_2^1}  + \frac{(1+\theta) c_3(\eta)}{\theta} \left\| u_{\gamma_j} - \frac{\theta n}{1+\theta} \right\|_{(W_2^1)'} + \frac{\|e_{\gamma_j}\|_2}{\theta} .
\end{align*}
After integration with respect to time, we deduce from~\eqref{eq:ng}, \eqref{ubd}, and~\eqref{eq:eg} that
\begin{align*}
	\|n_{\gamma_j} - n \|_{L_2((0,T)\times\Omega)} & \le \frac{(1+\theta)\eta}{\theta} \left( \|u_{\gamma_j}\|_{L_2((0,T),W_2^1(\Omega))} + \frac{\theta}{1+\theta}  \|n\|_{L_2((0,T),W_2^1(\Omega))} \right) \\
	& \qquad + c_3(\eta) \| n_{\gamma_j} - n \|_{L_2((0,T),W_2^1(\Omega)')} + \frac{c_3(\eta)}{\theta}  \| e_{\gamma_j} \|_{L_2((0,T),W_2^1(\Omega)')} \\
	& \qquad +  \frac{\|e_{\gamma_j}\|_{L_2((0,T)\times\Omega)}}{\theta} \\
	& \le  \frac{(1+\theta)\eta}{\theta} \left( c_1 \sqrt{T} + c_1 + \frac{\theta}{1+\theta}  \|n\|_{L_2((0,T),W_2^1(\Omega))} \right) \\
	& \qquad + c_3(\eta) \| n_{\gamma_j} - n \|_{L_2((0,T),W_2^1(\Omega)')} + \frac{c_3(\eta)}{\theta}  \| e_{\gamma_j} \|_{L_2((0,T),W_2^1(\Omega)')} \\
	& \qquad +  \frac{\|e_{\gamma_j}\|_{L_2((0,T)\times\Omega)}}{\theta}.
\end{align*}
Owing to~\eqref{eq:cvs}, \eqref{eq:sce}, and the continuous embedding of $L_2(\Omega)$ in $W_2^1(\Omega)'$, we may take the limit $j\to\infty$ in the above inequality to obtain
\begin{equation*}
	\limsup_{j\to\infty} \|n_{\gamma_j} - n \|_{L_2((0,T)\times\Omega)} \le \frac{(1+\theta)\eta}{\theta} \left( c_1 \sqrt{T} + c_1 + \frac{\theta}{1+\theta}  \|n\|_{L_2((0,T),W_2^1(\Omega))} \right).
\end{equation*}
The above inequality being valid for any $\eta>0$, we conclude that
\begin{equation*}
	\lim_{j\to\infty}  \|n_{\gamma_j} - n \|_{L_2((0,T)\times\Omega)} = 0,
\end{equation*}
which completes the proof of~\eqref{eq:cvi1}. Finally, the convergences~\eqref{eq:cvi2} are immediate consequences of~\eqref{eq:ng}, \eqref{eq:eg}, \eqref{eq:cvi1}, and~\eqref{eq:sce}.
\end{proof}

\begin{corollary}\label{cor:limch}
Assume~\eqref{ubd}. There holds
\begin{equation*}
	\lim_{j\to\infty} \int_0^T \left\| \left( u_{\gamma_j}\nabla w_{\gamma_j} - \frac{\theta}{1+\theta} n \nabla w \right)(t) \right\|_{2}^{4/3}\ \mathrm{d}t = 0.
\end{equation*}
\end{corollary}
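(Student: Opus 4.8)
The plan is to decompose the difference as
\begin{equation*}
	u_{\gamma_j}\nabla w_{\gamma_j} - \frac{\theta}{1+\theta} n \nabla w = \left( u_{\gamma_j} - \frac{\theta n}{1+\theta} \right) \nabla w_{\gamma_j} + \frac{\theta n}{1+\theta} \left( \nabla w_{\gamma_j} - \nabla w \right)
\end{equation*}
and to estimate the $L_{4/3}((0,T),L_2(\Omega))$-norm of each term separately. For both terms I would first apply H\"older's inequality in space to split the product into two factors in $L_4(\Omega)$, and then H\"older's inequality in time with exponents $3$ and $3/2$. At this stage one uses the bound $\int_0^T \|\nabla w_\gamma(t)\|_4^4\,\mathrm{d}t \le c_2^2(T)$ from \Cref{lem:bd} for the first term, and the bound $\int_0^T \|n(t)\|_4^4\,\mathrm{d}t < \infty$ --- which follows from $n\in L_\infty((0,T),L_2(\Omega))\cap L_2((0,T),W_2^1(\Omega))$ together with~\eqref{eq:GN} --- for the second. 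This reduces matters to proving that
\begin{equation*}
	\int_0^T \left\| \left( u_{\gamma_j} - \frac{\theta n}{1+\theta} \right)(t) \right\|_4^2\,\mathrm{d}t \longrightarrow 0 \qquad\text{and}\qquad \int_0^T \|(\nabla w_{\gamma_j} - \nabla w)(t)\|_4^2\,\mathrm{d}t \longrightarrow 0
\end{equation*}
as $j\to\infty$.

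For the first of these limits, the Gagliardo-Nirenberg inequality~\eqref{eq:GN} gives $\|z\|_4^2 \le c_0^2 \|z\|_{W_2^1}\|z\|_2$, so that, after one further application of H\"older's inequality in time,
\begin{equation*}
	\int_0^T \left\| u_{\gamma_j} - \frac{\theta n}{1+\theta} \right\|_4^2\,\mathrm{d}t \le c_0^2 \left( \int_0^T \left\| u_{\gamma_j} - \frac{\theta n}{1+\theta} \right\|_{W_2^1}^2\,\mathrm{d}t \right)^{1/2} \left( \int_0^T \left\| u_{\gamma_j} - \frac{\theta n}{1+\theta} \right\|_2^2\,\mathrm{d}t \right)^{1/2}.
\end{equation*}
The first factor on the right-hand side is bounded, since $(u_\gamma)_{\gamma\ge 1}$ is bounded in $L_2((0,T),W_2^1(\Omega))$ by~\eqref{ubd1}--\eqref{ubd2} and $n\in L_2((0,T),W_2^1(\Omega))$ by \Cref{prop:cv}, while the second factor tends to $0$ as $j\to\infty$ by~\eqref{eq:cvi2}. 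The second limit is obtained in the same way, using~\eqref{eq:GN} together with the boundedness of $(\nabla w_\gamma)_{\gamma\ge 1}$ and of $\nabla w$ in $L_2((0,T),W_2^1(\Omega))$ --- a consequence of~\eqref{ubd3} and of $w\in L_2((0,T),W_2^2(\Omega))$ --- and the convergence $\int_0^T \|(w_{\gamma_j}-w)(t)\|_{W_2^1}^2\,\mathrm{d}t \to 0$ from~\eqref{eq:cvi1}.

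The delicate point --- and the reason the statement involves the exponent $4/3$ in time rather than $2$ --- is that neither $(u_\gamma)_{\gamma\ge 1}$ nor $(\nabla w_\gamma)_{\gamma\ge 1}$ is known to converge strongly in $L_4((0,T)\times\Omega)$: only the $L_4$-bounds from \Cref{lem:bd} and the strong $L_2((0,T)\times\Omega)$-convergences of \Cref{prop:cv} are available. The exponent $4/3$ leaves exactly enough room, after the two applications of H\"older's inequality in time, to spend the full fourth power of the $L_4$-bound on one factor while only requiring the square of the other, not-strongly-$L_4$-convergent, factor; that square is then controlled by interpolation via~\eqref{eq:GN}. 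Beyond this bookkeeping of exponents I do not anticipate any further obstacle, everything following from \Cref{lem:bd}, \Cref{prop:cv}, and~\eqref{eq:GN}.
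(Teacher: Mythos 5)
Your proof is correct and follows essentially the same route as the paper: the same decomposition of $u_{\gamma_j}\nabla w_{\gamma_j}-u\nabla w$ (with $u=\theta n/(1+\theta)$), the same two applications of H\"older's inequality in space and time, and the same use of~\eqref{eq:GN} to convert the strong $L_2((0,T)\times\Omega)$-convergences of \Cref{prop:cv} into the required $L_2((0,T),L_4(\Omega))$-convergences. The only cosmetic difference is that you bound $\int_0^T\|n\|_4^4\,\mathrm{d}t$ via the regularity of the limit $n$ and~\eqref{eq:GN}, whereas the paper invokes the $L_4((0,T)\times\Omega)$-bound on $u$ inherited from~\eqref{eq:wcv2}; both are valid.
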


\begin{proof}
	We first recall that $\theta n/(1+\theta) = u$ by~\eqref{eq:uv}.
	
	On the one hand, thanks to~\eqref{eq:GN}, \eqref{ubd}, the regularity of $u$, and H\"older's inequality, we obtain
	\begin{align*}
		& \int_0^T \| ( u_{\gamma_j} - u )(t) \|_{4}^{2}\ \mathrm{d}t  \le c_0^2 \int_0^T \|( u_{\gamma_j} - u)(t) \|_{W_2^1} \| ( u_{\gamma_j} -u )(t) \|_{2}  \ \mathrm{d}t \\
		& \qquad \le c_0^2 \big( \|u_{\gamma_j}\|_{L_2((0,T),W_2^1(\Omega))} + \|u\|_{L_2((0,T),W_2^1(\Omega))} \big) \| u_{\gamma_j} - u \|_{L_2((0,T)\times\Omega)} \\
		& \qquad \le  c_0^2 \left( c_1 + c_1 \sqrt{T} + \|u\|_{L_2((0,T),W_2^1(\Omega))} \right) \| u_{\gamma_j} -  u \|_{L_2((0,T)\times\Omega)}.
		\end{align*}
		Hence, by~\eqref{eq:cvi2},
		\begin{equation}
			\lim_{j\to\infty}\int_0^T \| ( u_{\gamma_j} - u)(t) \|_{4}^{2}\ \mathrm{d}t = 0. \label{eq:cvu}
		\end{equation}
		On the other hand, we deduce from~\eqref{eq:GN}, \eqref{ubd3}, and H\"older's inequality that
		\begin{align*}
			& \int_0^T \|(\nabla w_{\gamma_j} - \nabla w)(t)\|_4^2\ \mathrm{d}t \\
			& \qquad \le c_0^2 \int_0^T \|(\nabla w_{\gamma_j} - \nabla w)(t)\|_{W_2^1} \|(\nabla w_{\gamma_j} - \nabla w)(t)\|_2\ \mathrm{d}t \\
			& \qquad \le c_0^2 \left( c(T) + \|w\|_{L_2((0,T),W_2^2(\Omega))} \right) \|w_{\gamma_j} - w\|_{L_2((0,T),W_2^1(\Omega))},
		\end{align*}
		and we use~\eqref{eq:cvi1} to conclude that
		\begin{equation}
			\lim_{j\to\infty} \int_0^T \|(\nabla w_{\gamma_j} - \nabla w)(t)\|_4^2\ \mathrm{d}t = 0. \label{eq:cvgw}
		\end{equation}
		Now, using H\"older's inequality, along with~\eqref{ubd3} and~\eqref{eq:wcv2}, gives
		\begin{align*}
			& \int_0^T \| ( u_{\gamma_j} \nabla w_{\gamma_j} - u \nabla w )(t) \|_{2}^{4/3}\ \mathrm{d}t \\
			& \qquad \le c \int_0^T \| [( u_{\gamma_j} - u) \nabla w_{\gamma_j} ](t) \|_{2}^{4/3}\ \mathrm{d}t + c \int_0^T \| [ u (\nabla w_{\gamma_j} - \nabla w)](t)\|_2^{4/3}\ \mathrm{d}t \\
			& \qquad \le c \int_0^T \|(u_{\gamma_j} - u)(t)\|_{4}^{4/3} \|\nabla w_{\gamma_j}(t) \|_{4}^{4/3}\ \mathrm{d}t + c \int_0^T \|u(t)\|_{4}^{4/3} \|(\nabla w_{\gamma_j} - \nabla w)(t)\|_{4}^{4/3}\ \mathrm{d}t \\
			& \qquad \le c \left( \int_0^T \|(u_{\gamma_j} - u)(t)\|_{4}^{2}\ \mathrm{d}t \right)^{2/3} \left( \int_0^T \|\nabla w_{\gamma_j}(t) \|_{4}^{4}\ \mathrm{d}t \right)^{1/3} \\
			& \qquad\qquad + c \left( \int_0^T \|u(t)\|_{4}^{4}\ \mathrm{d}t \right)^{1/3} \left( \int_0^T \|(\nabla w_{\gamma_j} - \nabla w)(t)\|_{4}^{2}\ \mathrm{d}t \right)^{2/3}  \\
			& \qquad \le c(T) \left[ \left( \int_0^T \|(u_{\gamma_j} - u)(t)\|_{4}^{2}\ \mathrm{d}t \right)^{2/3} + \left( \int_0^T \|(\nabla w_{\gamma_j} - \nabla w)(t)\|_{4}^{2}\ \mathrm{d}t \right)^{2/3} \right]
		\end{align*}
		and we may pass to the limit $j\to\infty$ in the above inequality with the help of~\eqref{eq:cvu} and~\eqref{eq:cvgw} to obtain \Cref{cor:limch}.
\end{proof}

\begin{proof}[Proof of \Cref{prop:bd}]
	Gathering the outcome of \Cref{prop:cv} and \Cref{cor:limch} gives \Cref{prop:bd}.
\end{proof}

\section{Uniform estimates}\label{sec:ue}

This section is devoted to the derivation of estimates on solutions to~\eqref{aPL}--\eqref{iPL} which do not depend on $\gamma$ and allow us to apply \Cref{prop:bd} to prove \Cref{thm:sl} in \Cref{sec:pth}. In the following, $C$ and $(C_i)_{i\ge 0}$ denote positive constants depending only on $\Omega$, $\theta$, $D$, $\alpha$, and $(u_0,v_0,w_0)$. The dependence upon additional parameters will be indicated explicitly. 
	
Two different cases are handled: first, no constraint is placed on $M=\|u_0+v_0\|_1$ and the estimates we derive are only valid on a finite time interval, a feature which complies with the possible finite time blowup of solutions to~\eqref{aKS}--\eqref{iKS}. We next assume that $M=\|u_0+v_0\|_1< 4\pi D (1+\theta)$ and show that estimates are available on any finite time interval, which is again consistent with the properties of~\eqref{aKS}--\eqref{iKS}, as global solutions to~\eqref{aKS}--\eqref{iKS} exist in that case. Specifically, we prove the following result.

\begin{proposition}\label{prop:ue0}
	Let $M>0$ and consider $(u_0,v_0,w_0)$ satisfying~\eqref{eq:ic} and~\eqref{mass0}. For $\gamma\ge 1$, we denote the corresponding solution to~\eqref{aPL}--\eqref{iPL} provided by \Cref{prop:wp} by $(u_\gamma,v_\gamma,w_\gamma)$. 
	\begin{itemize}
		\item [(a)] There is $T_\infty\in (0,\infty)$ depending only on $\Omega$, $\theta$, $D$, $\alpha$, and $(u_0,v_0,w_0)$ with the following property: for any $T\in (0,T_\infty)$, there is $C(T)>0$ depending only on $\Omega$, $\theta$, $D$, $\alpha$, $(u_0,v_0,w_0)$, and $T$ such that, for any $\gamma\ge 1$, 
		\begin{subequations}\label{eq:ue}
		\begin{align}
			\|u_\gamma(t)\|_2 + \|v_\gamma(t)\|_2 + \|w_\gamma(t)\|_{W_2^1} &\le C(T), \qquad t\in [0,T], \label{eq:ues} \\
			\int_0^T \big[ \|\nabla u_\gamma(t)\|_2^2 + \|\partial_t w_\gamma(t)\|_2^2 + \gamma \|e_\gamma(t)\|_2^2 \big] \ \mathrm{d}t & \le C(T), \label{eq:uei}
		\end{align}
		\end{subequations}
		recalling that $e_\gamma$ is defined in~\eqref{eq:eg}.
		\item [(b)] Assume further that $M\in (0,4\pi D(1+\theta))$. Then, for all $T>0$, there is $C(T)>0$ depending only on $\Omega$, $\theta$, $D$, $\alpha$, $(u_0,v_0,w_0)$, and $T$ such that the estimates~\eqref{eq:ue} are valid for all $\gamma\ge 1$.
	\end{itemize}
\end{proposition}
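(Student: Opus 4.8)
\textbf{Proof plan for \Cref{prop:ue0}.} The plan is to establish a chain of differential inequalities, treating the two cases together as far as possible and invoking the mass constraint only where it is genuinely needed. First I would record the basic structure: integrating \eqref{a1PL} and \eqref{a2PL} over $\Omega$ and adding them gives conservation of $\|n_\gamma(t)\|_1 = M$, which is already in \Cref{prop:wp}; and testing \eqref{a2PL} by $\mathrm{sign}$-type arguments (or simply noting non-negativity and \eqref{a2PL}) controls $\|v_\gamma\|_1$ and $\|u_\gamma\|_1$ separately by quantities bounded in terms of $M$. The first real step is to test \eqref{a3PL} by $w_\gamma$ and by $-\Delta w_\gamma$ to get, via \eqref{eq:GN} and Young's inequality, a bound on $\|w_\gamma\|_{W_2^1}$ and on $\int_0^T \|w_\gamma\|_{W_2^2}^2$ in terms of $\int_0^T \|v_\gamma\|_2^2$ — this is the content alluded to in \Cref{rem:re} and in \Cref{lem:ste1}, \Cref{lem:ge2}. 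So the whole estimate reduces to controlling $\|u_\gamma\|_2$, $\|v_\gamma\|_2$, $\int_0^T \|\nabla u_\gamma\|_2^2$, and $\gamma\int_0^T\|e_\gamma\|_2^2$.

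The core of the argument is an energy estimate for $\|u_\gamma\|_2^2 + \|v_\gamma\|_2^2$. I would test \eqref{a1PL} by $u_\gamma$ and \eqref{a2PL} by $\theta v_\gamma$ (or by $v_\gamma$), add, and use the key algebraic identity that the reaction terms combine as $\gamma(\theta v_\gamma - u_\gamma)(u_\gamma) + \gamma(u_\gamma-\theta v_\gamma)(v_\gamma)$-type expressions, which should produce a good term proportional to $-\gamma\|e_\gamma\|_2^2$ (this is exactly where the dissipation $\gamma\int_0^T\|e_\gamma\|_2^2$ in \eqref{eq:uei} comes from). From \eqref{a1PL} tested by $u_\gamma$ one also gets $-\|\nabla u_\gamma\|_2^2$ plus the chemotaxis term $\int_\Omega u_\gamma \nabla w_\gamma\cdot\nabla u_\gamma\,\mathrm{d}x = \frac12\int_\Omega \nabla(u_\gamma^2)\cdot\nabla w_\gamma\,\mathrm{d}x = -\frac12\int_\Omega u_\gamma^2 \Delta w_\gamma\,\mathrm{d}x$. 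The crucial estimate is then to absorb $\int_\Omega u_\gamma^2\Delta w_\gamma\,\mathrm{d}x$ into the dissipation. Using $\|u_\gamma^2\|_2 = \|u_\gamma\|_4^2$, \eqref{eq:GN}, and $\int_0^T\|\Delta w_\gamma\|_2^2\,\mathrm{d}t \lesssim \int_0^T\|v_\gamma\|_2^2\,\mathrm{d}t$, one bounds this by $\varepsilon\|\nabla u_\gamma\|_2^2 + C(\varepsilon)(1+\|u_\gamma\|_2^2)\|\Delta w_\gamma\|_2^2\|u_\gamma\|_2^2$ (schematically), leading to a Gronwall-type inequality
\begin{equation*}
	\frac{\mathrm{d}}{\mathrm{d}t}\big(\|u_\gamma\|_2^2 + \theta\|v_\gamma\|_2^2\big) + \|\nabla u_\gamma\|_2^2 + \gamma\|e_\gamma\|_2^2 \le C\big(1 + \|\Delta w_\gamma\|_2^2\big)\big(\|u_\gamma\|_2^2 + \|v_\gamma\|_2^2\big) + C.
\end{equation*}
Since $\int_0^T\|\Delta w_\gamma\|_2^2$ is itself controlled by $\int_0^T\|v_\gamma\|_2^2$, this is not immediately closed; one couples it with the $w$-estimate and uses a continuity/bootstrap argument on a short interval $[0,T_\infty)$ to close it, which is exactly why part (a) only yields a finite time horizon.

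For part (b), the short-time restriction must be removed, and here the Liapunov functional from \cite{LaSt2021} enters. The idea is that for $M < 4\pi D(1+\theta)$ the functional $\mathcal{L}_\gamma$ (of the form $\int_\Omega (u_\gamma\ln u_\gamma + \tfrac{1}{\theta}\Phi(v_\gamma)) - \int_\Omega u_\gamma w_\gamma + \tfrac12\|\nabla w_\gamma\|_2^2 + \tfrac{\alpha}{2}\|w_\gamma\|_2^2$, up to normalization) is non-increasing along the flow, and the Moser–Trudinger inequality (\Cref{sec:mti}) shows that the subcritical mass condition makes $\mathcal{L}_\gamma$ coercive: it controls $\int_\Omega u_\gamma\ln u_\gamma$ and $\|w_\gamma\|_{W_2^1}^2$ from above uniformly in $t$ and $\gamma$. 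From an $L\ln L$ bound on $u_\gamma$ together with the $w$-regularity one upgrades to the $L_2$ bounds and the dissipation integrals on any $[0,T]$, now with no smallness of $T$. The main obstacle, as the paper itself flags, is this case (b): one must verify that the Liapunov structure of \cite{LaSt2021} survives with $\gamma$-independent constants and that the Moser–Trudinger threshold $4\pi$ translates correctly into the mass bound $4\pi D(1+\theta)$ after accounting for the factors $D$, $\alpha$, $\theta$ and the $1/(1+\theta)$ in the coupling; the short-time estimate in (a) is comparatively routine, its only delicate point being the careful tracking of the $\gamma$-independence of all constants (in particular that the chemotaxis term is absorbed without ever dividing by $\gamma$) and the handling of the non-diffusive equation \eqref{a2PL}, for which one only has the ODE bound $\|v_\gamma(t)\|_2 \le \|v_0\|_2 + \gamma\int_0^t\|u_\gamma - \theta v_\gamma\|_2\,\mathrm{d}s$ — so the $\gamma\|e_\gamma\|_2^2$ dissipation must be genuinely exploited, not merely recorded.
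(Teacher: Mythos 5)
Your plan splits into two halves of different quality. For part~(a), your direct $L_2$ energy estimate is a viable alternative to the paper's route: testing \eqref{a1PL} by $u_\gamma$ and \eqref{a2PL} by $\theta v_\gamma$ does produce $-\gamma\|e_\gamma\|_2^2$, the chemotaxis term is absorbed as you describe, and the resulting inequality $\frac{\mathrm{d}}{\mathrm{d}t}Y \le C\big(1+\|\Delta w_\gamma\|_2^2\big)Y$ with $\int_0^t\|\Delta w_\gamma\|_2^2\,\mathrm{d}s \lesssim 1+\int_0^t Y\,\mathrm{d}s$ is a superlinear Gronwall inequality that does close on a short interval, with all constants $\gamma$-independent. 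The paper instead first derives an $L\ln L$ (entropy) bound: it differentiates a functional $P_\gamma$ containing $\|L(u_\gamma)\|_1+\|L_\theta(v_\gamma)\|_1$ plus $w$-energies and $\gamma^{-1}$-weighted terms, exploits the sign of $-\gamma\int(\theta v_\gamma-u_\gamma)\ln(\theta v_\gamma/u_\gamma)$, and controls $\|u_\gamma\|_2^2$ by $\|\nabla\sqrt{u_\gamma}\|_2^2$ via a truncation at level $K=\exp(c\|L(u_\gamma)\|_1)$, yielding $\frac{\mathrm{d}}{\mathrm{d}t}P_\gamma\le C_1e^{C_1P_\gamma}$ and hence a finite $T_\infty$. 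The $L_2$ bound is then obtained afterwards by the same argument as in case~(b). So the paper's $T_\infty$ is governed by the entropy, yours by the $L_2$ norm; either yields the statement of~(a).

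For part~(b) there is a genuine gap: you assert that "from an $L\ln L$ bound on $u_\gamma$ together with the $w$-regularity one upgrades to the $L_2$ bounds \ldots on any $[0,T]$", but you give no mechanism, and your own $L_2$ inequality cannot deliver this. Since $\|\Delta w_\gamma\|_2^2\lesssim 1+\|\partial_t w_\gamma\|_2^2+\|v_\gamma\|_2^2$ and $\|v_\gamma\|_2^2\le Y/\theta$, your differential inequality contains a term of order $Y^2$, which forces finite-time blowup of the bound no matter what $L\ln L$ information is available. The paper closes the global $L_2$ estimate with three ingredients absent from your plan: (i) the substitution $\Delta w_\gamma=(\partial_t w_\gamma+\alpha w_\gamma-v_\gamma)/D$ inside the chemotaxis term, so that the dangerous contribution becomes $\int_\Omega u_\gamma^2 v_\gamma\,\mathrm{d}x$ rather than $\|u_\gamma\|_4^2\|\Delta w_\gamma\|_2$; (ii) the auxiliary term $\gamma^{-1}\|v_\gamma\|_3^3$ added to the energy $Y_\gamma$, whose time derivative produces the dissipation $-3\theta\|v_\gamma\|_3^3$ needed to absorb $\int u_\gamma^2v_\gamma\,\mathrm{d}x$ and $\int u_\gamma v_\gamma^2\,\mathrm{d}x$ after Young's inequality; and (iii) the logarithmic Gagliardo--Nirenberg inequality of Biler--Hebisch--Nadzieja, $\|z\|_3^3\le\eta\|z\ln|z|\|_1\|z\|_{W_2^1}^2+C(\eta)\|z\|_1$, which is precisely where the uniform $L\ln L$ bound enters: choosing $\eta$ inversely proportional to that bound absorbs $C\|u_\gamma\|_3^3$ into $\|\nabla u_\gamma\|_2^2$. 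What remains is linear in $Y_\gamma$ with coefficient $1+\|\partial_t w_\gamma\|_2^2\in L_1(0,T)$ (supplied by the Liapunov functional), and Gronwall closes on any $[0,T]$. Without (i)--(iii) the passage from $L\ln L$ to $L_2$ does not follow, so this step — not the verification of the Moser--Trudinger threshold, which is essentially bookkeeping from \cite{LaSt2021} — is the real content of part~(b). The same upgrade is also what the paper uses to finish part~(a), which is why your asymmetric treatment of the two cases understates the work needed in~(b).
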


\subsection{Small time estimates}\label{sec:ste}

We begin with the following estimate which is valid whatever the value of $M$ but only on a finite time interval.

\begin{lemma}\label{lem:ste1}
	There is $T_\infty\in (0,\infty)$ (defined in~\eqref{eq:ste07} below) depending only on $\Omega$, $\theta$, $D$, $\alpha$, and $(u_0,v_0,w_0)$ with the following property: for any $T\in (0,T_\infty)$, there is $C_0(T)>0$ depending only on $\Omega$, $\theta$, $D$, $\alpha$, $(u_0,v_0,w_0)$, and $T$ such that, for any $\gamma \ge 1$, 
	\begin{align}
		\|L(u_\gamma(t))\|_1 + \|L_\theta(v_\gamma(t))\|_1 + \|w_\gamma(t)\|_{W_2^1} & \le C_0(T), \qquad t\in (0,T), \label{eq:ste01}\\
		\int_0^T \|\partial_t w_\gamma(t)\|_2^2\ \mathrm{d}t & \le C_0(T), \label{eq:ste02}
	\end{align}
	where
	\begin{equation}\label{eL}
		L(r) := r \ln r -r+1 \ge 0, \qquad  L_\theta (r) := \frac{L(\theta r)}{\theta} = r \ln (\theta r) -r+ \frac{1}{\theta} \ge 0, \qquad r \ge 0.
	\end{equation}
\end{lemma}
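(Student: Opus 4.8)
\textbf{Proof plan for \Cref{lem:ste1}.}
The natural starting point is the entropy-type functional associated with the Keller--Segel structure of \eqref{aPL}. Multiplying \eqref{a1PL} by $\ln u_\gamma$, \eqref{a2PL} by $\ln(\theta v_\gamma)$, integrating over $\Omega$, and summing, I expect the reaction terms $\gamma(\theta v_\gamma - u_\gamma)$ to produce a manifestly good contribution: the cross terms combine into $-\gamma\int_\Omega (\theta v_\gamma - u_\gamma)\big(\ln(\theta v_\gamma) - \ln u_\gamma\big)\,\mathrm{d}x \le 0$ by monotonicity of $\ln$, so the switching does not spoil the estimate (this is precisely why $L_\theta$ is scaled the way it is in \eqref{eL}). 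The diffusion-chemotaxis part of \eqref{a1PL} yields, after integration by parts, $-4\|\nabla\sqrt{u_\gamma}\|_2^2 + \int_\Omega \nabla u_\gamma\cdot\nabla w_\gamma\,\mathrm{d}x = -4\|\nabla\sqrt{u_\gamma}\|_2^2 - \int_\Omega u_\gamma \Delta w_\gamma\,\mathrm{d}x$. Using \eqref{a3PL} to replace $\Delta w_\gamma$ by $D^{-1}(\partial_t w_\gamma + \alpha w_\gamma - v_\gamma)$ introduces the term $-D^{-1}\int_\Omega u_\gamma \partial_t w_\gamma\,\mathrm{d}x$, which I would like to absorb into a time derivative; but since $u_\gamma$ and $v_\gamma$ differ, this does not close exactly and one has to carry along the error $\gamma(\theta v_\gamma - u_\gamma)$ and estimate $\int_\Omega u_\gamma w_\gamma\,\mathrm{d}x$ and $\int_\Omega u_\gamma v_\gamma\,\mathrm{d}x$ by other means.

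Rather than chase a perfect Liapunov functional, I would set up a differential inequality for the quantity $\mathcal{E}_\gamma(t) := \|L(u_\gamma(t))\|_1 + \|L_\theta(v_\gamma(t))\|_1 + \tfrac12\|w_\gamma(t)\|_{W_2^1}^2$ (or a suitable weighted version). Testing \eqref{a3PL} with $-\Delta w_\gamma$ gives $\tfrac12\tfrac{\mathrm d}{\mathrm dt}\|\nabla w_\gamma\|_2^2 + D\|\Delta w_\gamma\|_2^2 + \alpha\|\nabla w_\gamma\|_2^2 = -\int_\Omega v_\gamma\Delta w_\gamma\,\mathrm{d}x \le \tfrac{D}{2}\|\Delta w_\gamma\|_2^2 + \tfrac{1}{2D}\|v_\gamma\|_2^2$, while testing with $w_\gamma$ controls $\|w_\gamma\|_2$; the price is $\|v_\gamma\|_2^2$, which is not yet controlled. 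To handle the chemotaxis term $\int_\Omega u_\gamma\Delta w_\gamma\,\mathrm{d}x$ I would bound it by $\tfrac{D}{4}\|\Delta w_\gamma\|_2^2 + C\|u_\gamma\|_2^2$ and then estimate $\|u_\gamma\|_2^2 \le \|u_\gamma\|_1\|u_\gamma\|_\infty$ is too crude; instead use the logarithmic Sobolev / Moser--Trudinger inequality (\Cref{sec:mti}) in the form $\|u_\gamma\|_2^2 \lesssim \|L(u_\gamma)\|_1\,\|\nabla\sqrt{u_\gamma}\|_2^2 + (\text{l.o.t.})$, so that for small time the $\|\nabla\sqrt{u_\gamma}\|_2^2$ factor is absorbed by the good diffusion term as long as $\|L(u_\gamma)\|_1$ stays bounded. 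This is the standard mechanism by which a local-in-time bound is obtained for Keller--Segel-type entropies without a mass restriction.

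Putting these pieces together, I expect a closed inequality of the shape $\tfrac{\mathrm d}{\mathrm dt}\mathcal E_\gamma \le C\,\Phi(\mathcal E_\gamma)$ with $\Phi$ superlinear (quadratic, say), uniformly in $\gamma\ge1$, from which a maximal existence time $T_\infty>0$ for the bound — depending only on $\mathcal E_\gamma(0)$, hence only on $(u_0,v_0,w_0)$ via \eqref{eq:ic} and not on $\gamma$ — follows by a comparison argument with the ODE $y' = C\Phi(y)$; this defines $T_\infty$ as in the claimed \eqref{eq:ste07}. The bound \eqref{eq:ste01} is then immediate from $\mathcal E_\gamma \le C_0(T)$, using that $\|L(u_\gamma)\|_1 \ge 0$, $\|L_\theta(v_\gamma)\|_1\ge0$ control the individual terms (and $\|w_\gamma\|_{W_2^1}^2 \le 2\mathcal E_\gamma$). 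For \eqref{eq:ste02}, once $\|w_\gamma\|_{L_\infty((0,T),W_2^1)}$ and $\|v_\gamma\|_{L_2((0,T)\times\Omega)}$ are known (the latter coming out of the integrated differential inequality, since the good terms $\int_0^T\|\Delta w_\gamma\|_2^2$ and $\int_0^T\|\nabla\sqrt{u_\gamma}\|_2^2\,\mathrm dt$ are controlled), I would test \eqref{a3PL} with $\partial_t w_\gamma$ to get $\|\partial_t w_\gamma\|_2^2 + \tfrac{D}{2}\tfrac{\mathrm d}{\mathrm dt}\|\nabla w_\gamma\|_2^2 + \tfrac{\alpha}{2}\tfrac{\mathrm d}{\mathrm dt}\|w_\gamma\|_2^2 = \int_\Omega v_\gamma\,\partial_t w_\gamma\,\mathrm dx \le \tfrac12\|\partial_t w_\gamma\|_2^2 + \tfrac12\|v_\gamma\|_2^2$, and integrate in time. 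The main obstacle I anticipate is the bookkeeping around the reaction terms: because $v_\gamma$ does not diffuse and $u_\gamma \ne \theta v_\gamma$ for finite $\gamma$, the chemotaxis term wants $\Delta w_\gamma$ expressed via $\partial_t w_\gamma$ and $v_\gamma$, not $u_\gamma$, so one must carefully track the mismatch $\theta v_\gamma - u_\gamma = e_\gamma$ and ensure every use of $\gamma(\theta v_\gamma-u_\gamma)\ln(\cdot)$ keeps a favorable sign; getting a genuinely $\gamma$-independent constant out of this — rather than one that degrades as $\gamma\to\infty$ — is the crux, and it hinges on the sign structure noted above together with never needing the dissipation rate $\gamma\|e_\gamma\|_2^2$ at this stage (that finer control is relegated to the later lemmas).
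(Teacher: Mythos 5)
Your skeleton is the right one — the entropies $L(u_\gamma)$, $L_\theta(v_\gamma)$, the favourable sign of $-\gamma\int_\Omega(\theta v_\gamma-u_\gamma)\ln(\theta v_\gamma/u_\gamma)\,\mathrm{d}x$, the substitution of $\Delta w_\gamma$ via \eqref{a3PL}, and an ODE comparison to get a $\gamma$-independent $T_\infty$ — but the two difficulties you explicitly flag are never resolved, and they are precisely the crux. First, every variant of your plan leaves $\|v_\gamma\|_2^2$ on the right-hand side (from testing \eqref{a3PL} with $-\Delta w_\gamma$ or with $\partial_t w_\gamma$, and from Young applied to $D^{-1}\int_\Omega u_\gamma v_\gamma\,\mathrm{d}x$), and nothing in your functional controls it: $\|L_\theta(v_\gamma)\|_1$ only gives $L\log L$ information, $v_\gamma$ has no diffusion, and your claim that $\|v_\gamma\|_{L_2((0,T)\times\Omega)}$ "comes out of the integrated differential inequality" is circular, since neither $\int_0^T\|\Delta w_\gamma\|_2^2\,\mathrm{d}t$ nor $\int_0^T\|\nabla\sqrt{u_\gamma}\|_2^2\,\mathrm{d}t$ dominates it. Second, you never close the cross term $-D^{-1}\int_\Omega u_\gamma\partial_t w_\gamma\,\mathrm{d}x$. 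The missing idea that fixes both at once is to augment the functional by the $\gamma$-weighted terms $\tfrac{1}{2\gamma}\|v_\gamma\|_2^2$ and $\tfrac{1}{2\gamma D}\|\partial_t w_\gamma\|_2^2$ (this is the paper's $P_\gamma$). By \eqref{a2PL}, $\tfrac{1}{2\gamma}\tfrac{\mathrm{d}}{\mathrm{d}t}\|v_\gamma\|_2^2=\int_\Omega u_\gamma v_\gamma\,\mathrm{d}x-\theta\|v_\gamma\|_2^2$: the $1/\gamma$ weight keeps the functional bounded at $t=0$ uniformly for $\gamma\ge 1$ while producing an \emph{unweighted} damping $-\theta\|v_\gamma\|_2^2$ that absorbs all occurrences of $\|v_\gamma\|_2^2$. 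Likewise $\tfrac{1}{2\gamma D}\tfrac{\mathrm{d}}{\mathrm{d}t}\|\partial_t w_\gamma\|_2^2$ contributes $\tfrac{1}{\gamma D}\int_\Omega\partial_t v_\gamma\,\partial_t w_\gamma\,\mathrm{d}x=\tfrac1D\int_\Omega(u_\gamma-\theta v_\gamma)\partial_t w_\gamma\,\mathrm{d}x$, which cancels \emph{exactly} against $-\tfrac1D\int_\Omega u_\gamma\partial_t w_\gamma\,\mathrm{d}x$ and the $+\tfrac{\theta}{D}\int_\Omega v_\gamma\partial_t w_\gamma\,\mathrm{d}x$ obtained by testing \eqref{a3PL} with $\partial_t w_\gamma$; the net effect is a clean $-\tfrac{\theta}{D}\|\partial_t w_\gamma\|_2^2$ which, after integration, is what delivers \eqref{eq:ste02}. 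This is also where the hypotheses $\gamma\ge1$ and $w_0\in W_2^2(\Omega)$ are used, to bound the initial value $\tfrac{1}{2\gamma D}\|D\Delta w_0-\alpha w_0+v_0\|_2^2$.

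A secondary but real flaw is your absorption step for the chemotaxis term: the inequality $\|u_\gamma\|_2^2\lesssim\|L(u_\gamma)\|_1\,\|\nabla\sqrt{u_\gamma}\|_2^2+\mathrm{l.o.t.}$ can only be absorbed into $-4\|\nabla\sqrt{u_\gamma}\|_2^2$ if the prefactor is \emph{small}, not merely bounded; for general data $\|L(u_0)\|_1$ is large and your argument breaks exactly in the regime the lemma must cover. The paper repairs this by truncating at level $K$ to get $\|u_\gamma\|_2^2\le C(\|L(u_\gamma)\|_1+M)(\ln K)^{-1}\|\nabla\sqrt{u_\gamma}\|_2^2+C(K,M)$ and choosing $K=\exp\{c(\|L(u_\gamma)\|_1+M)\}$; the price is that the closed inequality reads $\tfrac{\mathrm{d}}{\mathrm{d}t}P_\gamma\le C_1e^{C_1P_\gamma}$ rather than your quadratic $\Phi$, which still yields a positive $T_\infty$ as in \eqref{eq:ste07} by ODE comparison, but with an exponential rather than algebraic blow-up time.
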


\begin{proof}
	On the one hand, it follows from~\eqref{a1PL}, \eqref{a3PL}, and~\eqref{bPL} that
	\begin{align*}
		\frac{\mathrm{d}}{\mathrm{d}t} \int_\Omega L(u_\gamma)\ \mathrm{d}x & = - \int_\Omega \nabla\ln{u_\gamma} \cdot \big( \nabla u_\gamma - u_\gamma \nabla w_\gamma \big)\ \mathrm{d}x + \gamma \int_\Omega \ln{u_\gamma} (\theta v_\gamma - u_\gamma)\ \mathrm{d}x \\
		& = - 4 \big\|\nabla\sqrt{u_\gamma}\big\|_2^2 - \int_\Omega u_\gamma \Delta w_\gamma\ \mathrm{d}x + \gamma \int_\Omega \ln{u_\gamma} (\theta v_\gamma - u_\gamma)\ \mathrm{d}x \\
		& = - 4 \big\|\nabla\sqrt{u_\gamma}\big\|_2^2 + \frac{1}{D} \int_\Omega u_\gamma \big( v_\gamma - \alpha w_\gamma - \partial_t w_\gamma \big)\ \mathrm{d}x + \gamma \int_\Omega \ln{u_\gamma} (\theta v_\gamma - u_\gamma)\ \mathrm{d}x.
	\end{align*}
	On the other hand, we infer from~\eqref{a2PL}, \eqref{a3PL}, and~\eqref{bPL} that
	\begin{align*}
		\frac{\mathrm{d}}{\mathrm{d}t} \int_\Omega L_\theta(v_\gamma)\ \mathrm{d}x & = - \gamma \int_\Omega \ln{(\theta v_\gamma)} (\theta v_\gamma - u_\gamma)\ \mathrm{d}x, \\
		\frac{1}{2\gamma} \frac{\mathrm{d}}{\mathrm{d}t} \|v_\gamma\|_2^2 & = \int_\Omega u_\gamma v_\gamma\ \mathrm{d}x - \theta \|v_\gamma\|_2^2,
	\end{align*}
	and
	\begin{align*}
		\frac{\theta}{2} \frac{\mathrm{d}}{\mathrm{d}t} \left( \|\nabla w_\gamma\|_2^2 
		+ \frac{\alpha}{D} \|w_\gamma\|_2^2 \right) & =  \frac{\theta}{D} \int_\Omega v_\gamma \partial_ t w_\gamma\ \mathrm{d}x -  \frac{\theta\|\partial_t w_\gamma\|_2^2}{D} \\
		\frac{1}{2\gamma D} \frac{\mathrm{d}}{\mathrm{d}t} \|\partial_t w_\gamma\|_2^2 & = \frac{1}{\gamma D} \int_\Omega \partial_t v_\gamma \partial_ t w_\gamma\ \mathrm{d}x - \frac{\|\nabla \partial_t w_\gamma\|_2^2}{\gamma} - \frac{ \alpha \|\partial_t w_\gamma\|_2^2}{\gamma D}.
	\end{align*}
	Introducing
	\begin{equation*}
		P_\gamma(u_\gamma,v_\gamma,w_\gamma) := \|L(u_\gamma)\|_1 + \|L_\theta(v_\gamma)\|_1 + \frac{\theta}{2} \|\nabla w_\gamma\|_2^2
		+ \frac{\alpha \theta}{2D} \|w_\gamma\|_2^2 + \frac{\|\partial_t w_\gamma\|_2^2}{2\gamma D}  + \frac{\|v_\gamma\|_2^2}{2\gamma} ,
	\end{equation*}
	we sum up the above five identities and use~\eqref{a2PL} to obtain
	\begin{align*}
		\frac{\mathrm{d}}{\mathrm{d}t}P_\gamma(u_\gamma,v_\gamma,w_\gamma) & = - 4 \big\|\nabla\sqrt{u_\gamma}\big\|_2^2 + \frac{D+1}{D} \int_\Omega u_\gamma v_\gamma\ \mathrm{d}x - \frac{\alpha}{D} \int_\Omega u_\gamma w_\gamma\ \mathrm{d}x \\
		& \qquad - \left( \frac{\theta}{D} + \frac{\alpha}{\gamma D} \right) \|\partial_t w_\gamma\|_2^2 - \frac{\|\nabla \partial_t w_\gamma\|_2^2}{\gamma} - \theta \|v_\gamma\|_2^2  \\
		& \qquad - \gamma \int_\Omega (\theta v_\gamma - u_\gamma) \ln{\left( \frac{\theta v_\gamma}{u_\gamma} \right)}\ \mathrm{d}x + \frac{1}{D} \int_\Omega \left(\frac{1}{\gamma} \partial_t v_\gamma + \theta v_\gamma - u_\gamma \right) \partial_t w_\gamma\ \mathrm{d}x
		 \\
		& = - 4 \big\|\nabla\sqrt{u_\gamma}\big\|_2^2 + \frac{D+1}{D} \int_\Omega u_\gamma v_\gamma\ \mathrm{d}x - \frac{\alpha}{D} \int_\Omega u_\gamma w_\gamma\ \mathrm{d}x 
		- \left( \frac{\theta}{D} + \frac{\alpha}{\gamma D} \right) \|\partial_t w_\gamma\|_2^2 \\
		& \qquad - \frac{\|\nabla \partial_t w_\gamma\|_2^2}{\gamma} - \theta \|v_\gamma\|_2^2  - \gamma \int_\Omega (\theta v_\gamma - u_\gamma) \ln{\left( \frac{\theta v_\gamma}{u_\gamma} \right)}\ \mathrm{d}x.
	\end{align*}
	Since both $u_\gamma$ and $w_\gamma$ are non-negative and the logarithm is an increasing function on $(0,\infty)$, we infer from Young's inequality that
	\begin{align*}
		\frac{\mathrm{d}}{\mathrm{d}t}P_\gamma(u_\gamma,v_\gamma,w_\gamma) \le  - 4 \big\|\nabla\sqrt{u_\gamma}\big\|_2^2 + \frac{(D+1)^2}{2\theta D^2} \| u_\gamma\|_2^2 + \frac{\theta}{2} \|v_\gamma\|_2^2 - \theta \|v_\gamma\|_2^2 - \frac{\theta}{D} \|\partial_t w_\gamma\|_2^2.
	\end{align*}
	Hence,
	\begin{equation}
		\frac{\mathrm{d}}{\mathrm{d}t}P_\gamma(u_\gamma,v_\gamma,w_\gamma) \le  - 4 \big\|\nabla\sqrt{u_\gamma}\big\|_2^2 + \frac{(D+1)^2}{2\theta D^2} \| u_\gamma\|_2^2 - \frac{\theta}{2} \|v_\gamma\|_2^2 - \frac{\theta}{D} \|\partial_t w_\gamma\|_2^2.
 		\label{eq:ste03}
	\end{equation}
	Now, let $K>1$ to be determined later. By the Gagliardo-Nirenberg inequality~\eqref{eq:GN},
	\begin{align*}
		\|u_\gamma\|_2^2 & = \big\| \sqrt{u_\gamma} \big\|_4^4 = \int_\Omega \big( \sqrt{u_\gamma} - K + K \big)^4 \mathbf{1}_{(K^2,\infty)}(u_\gamma)\ \mathrm{d}x + \int_\Omega u_\gamma^2  \mathbf{1}_{[0,K^2]}(u_\gamma)\ \mathrm{d}x \\
		& \le \int_\Omega \big( \big( \sqrt{u_\gamma} - K \big)_+ + K \big)^4 \mathbf{1}_{(K^2,\infty)}(u_\gamma)\ \mathrm{d}x + K^2 \int_\Omega u_\gamma  \mathbf{1}_{[0,K^2]}(u_\gamma)\ \mathrm{d}x \\
		& \le 8 \int_\Omega \big( \sqrt{u_\gamma} - K \big)_+^4 \ \mathrm{d}x + 8 K^4 \int_\Omega \mathbf{1}_{(K^2,\infty)}(u_\gamma)\ \mathrm{d}x + K^2 \int_\Omega u_\gamma  \mathbf{1}_{[0,K^2]}(u_\gamma)\ \mathrm{d}x \\
		& \le 8 c_0^4 \left( \big\| \nabla \big( \sqrt{u_\gamma} - K \big)_+ \big\|_2^2 \big\| \big( \sqrt{u_\gamma} - K\big)_+ \big\|_2^2 + \big\| \big( \sqrt{u_\gamma} - K \big)_+ \big\|_2^4 \right) \\
		& \qquad + 8 K^2 \int_\Omega u_\gamma \mathbf{1}_{(K^2,\infty)}(u_\gamma)\ \mathrm{d}x + K^2 \int_\Omega u_\gamma  \mathbf{1}_{[0,K^2]}(u_\gamma)\ \mathrm{d}x \\
		& \le 8 c_0^4 \big\| u_\gamma \mathbf{1}_{(K^2,\infty)}(u_\gamma) \big\|_1 \big\| \nabla \sqrt{u_\gamma} \big\|_2^2 + 8 c_0^4 \|u_\gamma\|_1^2 + 8 K^2 \|u_\gamma\|_1.
	\end{align*}
	Since 
	\begin{equation*}
		\big\| u_\gamma \mathbf{1}_{(K^2,\infty)}(u_\gamma) \big\|_1 \le \frac{1}{2\ln{K}} \int_\Omega u_\gamma \ln{u_\gamma} \mathbf{1}_{(K^2,\infty)}(u_\gamma)\ \mathrm{d}x \le \frac{\|L(u_\gamma)\|_1 + \|u_\gamma\|_1}{2\ln{K}},
	\end{equation*}
	we combine the above two inequalities and~\eqref{mass} to obtain
	\begin{equation*}
		\|u_\gamma\|_2^2 \le \frac{4 c_0^4 \big( \|L(u_\gamma)\|_1 + M \big)}{\ln{K}} \big\| \nabla \sqrt{u_\gamma} \big\|_2^2 + 8 M \big( c_0^4 M + K^2 \big). 
	\end{equation*}
	At this point, we choose
	\begin{equation*}
		K = \exp{\left\{ \frac{c_0^4 (D+1)^2}{\theta D^2}  \big( \|L(u_\gamma)\|_1 + M \big) \right\}}
	\end{equation*}
	in the above inequality and end up with 
	\begin{equation}
		\|u_\gamma\|_2^2 \le \frac{4\theta D^2}{(D+1) ^2} \big\|\nabla \sqrt{u_\gamma} \big\|_2^2 + 8M \left( c_0^4 M + \exp{\left\{ \frac{2 c_0^4 (D+1)^2}{\theta D^2} \big( \|L(u_\gamma)\|_1 + M \big) \right\}} \right). \label{eq:ste04}
	\end{equation}
	Since $\|L(u_\gamma)\|_1 \le P_\gamma(u_\gamma,v_\gamma,w_\gamma)$, it follows from~\eqref{eq:ste03} and~\eqref{eq:ste04} that there is $C_1>0$ such that
	\begin{equation}
		\frac{\mathrm{d}}{\mathrm{d}t}P_\gamma(u_\gamma,v_\gamma,w_\gamma) \le - 2 \big\|\nabla\sqrt{u_\gamma}\big\|_2^2 - \frac{\theta}{2} \|v_\gamma\|_2^2 - \frac{\theta}{D} \|\partial_t w_\gamma\|_2^2 + C_1 e^{C_1 P_\gamma(u_\gamma,v_\gamma,w_\gamma)}. \label{eq:ste05}
	\end{equation}
	We first infer from~\eqref{eq:ste05} that 
	\begin{equation*}
		\frac{\mathrm{d}}{\mathrm{d}t}P_\gamma(u_\gamma,v_\gamma,w_\gamma) \le C_1 e^{C_1 P_\gamma(u_\gamma,v_\gamma,w_\gamma)}, \qquad t > 0;
	\end{equation*}
	hence, after integration with respect to time,
	\begin{equation}
		P_\gamma(u_\gamma(t),v_\gamma(t),w_\gamma(t)) \le P_\gamma(u_0,v_0,w_0) - \frac{1}{C_1} \ln{\left( 1 - \frac{t}{T_\gamma} \right)}, \qquad t\in \left[ 0,T_\gamma \right), \label{eq:ste06}
	\end{equation}
	where
	\begin{equation*}
		T_\gamma := \frac{e^{- C_1 P_\gamma(u_0,v_0,w_0)}}{C_1^2} \in (0,\infty). 
	\end{equation*}
	Now, in view of $\gamma \ge 1$,
	\begin{align*}
		P_\gamma(u_0,v_0,w_0) \le P_1 (u_0,v_0,w_0) & := \|L(u_0)\|_1 + \|L_\theta(v_0)\|_1 + \frac{\theta}{2} \|\nabla w_0\|_2^2 
		+ \frac{\alpha \theta}{2D} \|w_0\|_2^2 \\
		& \hspace*{+10mm} + \frac{1}{2D} \|D \Delta w_0 - \alpha w_0 + v_0\|_2^2  + \frac{1}{2} \|v_0\|_2^2,
	\end{align*}
	so that
	\begin{equation}
		T_\gamma \ge T_\infty := \frac{e^{- C_1 P_1 (u_0,v_0,w_0)}}{C_1^2}, \label{eq:ste07} 
	\end{equation}
	and the estimates~\eqref{eq:ste01} readily follow from~\eqref{eq:ste06}, while~\eqref{eq:ste02} is a consequence of~\eqref{eq:ste05} and~\eqref{eq:ste06} after integrating the former with respect to time.
\end{proof}

\subsection{Global estimates}\label{sec:ge}

We next show that the outcome of \Cref{lem:ste1} is valid for any $T>0$ provided $M=\|u_0\|_1+\|v_0\|_1$ is appropriately small. The building block of the proof, which is actually one of the main contributions of \cite{LaSt2021}, is the construction of a Liapunov function for \eqref{aPL}--\eqref{iPL}, which we recall now. We set
\begin{equation}
\begin{split}
\mathcal{L}_\gamma(u,v,w) & := \int_\Omega \left( L(u) + L_\theta (v)- (u+v)w \right)\ \mathrm{d}x + \frac{1+\theta}{2} \left( D \|\nabla w\|_{2}^2 + \alpha \|w\|_{2}^2 \right) \\
& \qquad + \frac{1}{2\gamma} \|D\Delta w - \alpha w + v\|_{2}^2\ ,
\end{split}\label{dPL}
\end{equation}
where $L$ and $L_\theta$ are defined in~\eqref{eL}. Adapting the computation performed in \cite[Lemma~2.1]{LaSt2021} to~\eqref{aPL} leads to the following differential inequality.

\begin{proposition}\label{prop:lf}
Let $M>0$ and $\gamma>0$. Consider $(u_0,v_0,w_0)$ satisfying~\eqref{eq:ic} and~\eqref{mass0} and denote the corresponding solution to \eqref{aPL}--\eqref{iPL} given by \Cref{prop:wp} by $(u_\gamma,v_\gamma,w_\gamma)$. Then
\begin{equation}\label{e4.2.1}
\frac{\mathrm{d}}{\mathrm{d}t} \mathcal{L}_\gamma(u_\gamma,v_\gamma,w_\gamma) + \mathcal{D}_\gamma(u_\gamma,v_\gamma,w_\gamma) =0, \qquad t >0,
\end{equation}
where $\mathcal{D}_\gamma$ is non-negative and given by 
\begin{equation}
	\begin{split}
		\mathcal{D}_\gamma(u,v,w) & := \int_\Omega u |\nabla(\ln{u}-w)|^2\ \mathrm{d}x + \gamma \int_\Omega (\theta v - u)(\ln{(\theta v)} - \ln{u})\ \mathrm{d}x \\
		& \qquad + \frac{D}{\gamma} \|\nabla(D\Delta w - \alpha w + v)\|_{2}^2 + \left( 1+\theta+\frac{\alpha}{\gamma} \right) \|D\Delta w - \alpha w + v\|_{2}^2\ .
	\end{split} \label{ePL}
\end{equation}
\end{proposition}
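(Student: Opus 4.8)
The statement to prove is \Cref{prop:lf}: the functional $\mathcal{L}_\gamma$ defined in \eqref{dPL} satisfies the differential identity \eqref{e4.2.1} with the non-negative dissipation $\mathcal{D}_\gamma$ given by \eqref{ePL}. The plan is to differentiate each of the three groups of terms in $\mathcal{L}_\gamma$ along solutions, using the PDEs \eqref{a1PL}--\eqref{a3PL} and the boundary conditions \eqref{bPL}, then collect everything and recognize the resulting expression as $-\mathcal{D}_\gamma$.

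\textbf{Step 1: the entropy terms.}
I would first compute $\frac{\mathrm{d}}{\mathrm{d}t}\int_\Omega L(u_\gamma)\,\mathrm{d}x$ using \eqref{a1PL}: since $L'(r)=\ln r$, we get $-\int_\Omega \nabla\ln u_\gamma\cdot(\nabla u_\gamma - u_\gamma\nabla w_\gamma)\,\mathrm{d}x + \gamma\int_\Omega \ln u_\gamma\,(\theta v_\gamma - u_\gamma)\,\mathrm{d}x$, the boundary term vanishing by \eqref{bPL}. Similarly, $\frac{\mathrm{d}}{\mathrm{d}t}\int_\Omega L_\theta(v_\gamma)\,\mathrm{d}x = \gamma\int_\Omega \ln(\theta v_\gamma)(u_\gamma-\theta v_\gamma)\,\mathrm{d}x$ from \eqref{a2PL} (note $L_\theta'(r)=\ln(\theta r)$). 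For the cross term, $\frac{\mathrm{d}}{\mathrm{d}t}\int_\Omega (u_\gamma+v_\gamma)w_\gamma\,\mathrm{d}x = \int_\Omega \partial_t(u_\gamma+v_\gamma)\,w_\gamma\,\mathrm{d}x + \int_\Omega (u_\gamma+v_\gamma)\partial_t w_\gamma\,\mathrm{d}x$; using \eqref{a1PL}+\eqref{a2PL} the chemotactic/diffusive part contributes, after integration by parts, $-\int_\Omega \nabla w_\gamma\cdot(\nabla u_\gamma - u_\gamma\nabla w_\gamma)\,\mathrm{d}x$, and the $\gamma$-reaction terms cancel between $u_\gamma$ and $v_\gamma$. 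The key algebraic point here is that combining the $u_\gamma$-flux terms with the $-w_\gamma$ coupling reorganizes $-\int\nabla\ln u_\gamma\cdot(\nabla u_\gamma - u_\gamma\nabla w_\gamma) + \int\nabla w_\gamma\cdot(\nabla u_\gamma - u_\gamma\nabla w_\gamma)$ into exactly $-\int_\Omega u_\gamma|\nabla(\ln u_\gamma - w_\gamma)|^2\,\mathrm{d}x$, which is the first dissipation term; and the $\gamma$-reaction pieces combine into $-\gamma\int_\Omega(\theta v_\gamma-u_\gamma)(\ln(\theta v_\gamma)-\ln u_\gamma)\,\mathrm{d}x$, the second dissipation term.

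\textbf{Step 2: the gradient and the regularizing terms.}
Next I would differentiate $\frac{1+\theta}{2}(D\|\nabla w_\gamma\|_2^2 + \alpha\|w_\gamma\|_2^2)$. Using \eqref{a3PL} and integrating by parts, $\frac{\mathrm{d}}{\mathrm{d}t}\tfrac12(D\|\nabla w_\gamma\|_2^2+\alpha\|w_\gamma\|_2^2) = \int_\Omega(-D\Delta w_\gamma + \alpha w_\gamma)\partial_t w_\gamma\,\mathrm{d}x = -\int_\Omega(D\Delta w_\gamma - \alpha w_\gamma + v_\gamma)\partial_t w_\gamma\,\mathrm{d}x + \int_\Omega v_\gamma\partial_t w_\gamma\,\mathrm{d}x = -\|\partial_t w_\gamma\|_2^2 + \int_\Omega v_\gamma\partial_t w_\gamma\,\mathrm{d}x$, where I used $\partial_t w_\gamma = D\Delta w_\gamma - \alpha w_\gamma + v_\gamma$ once more. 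For the last term of $\mathcal{L}_\gamma$, write $z_\gamma := D\Delta w_\gamma - \alpha w_\gamma + v_\gamma = \partial_t w_\gamma$; then $\frac{1}{2\gamma}\frac{\mathrm{d}}{\mathrm{d}t}\|z_\gamma\|_2^2 = \frac1\gamma\int_\Omega z_\gamma\,\partial_t z_\gamma\,\mathrm{d}x$, and $\partial_t z_\gamma = D\Delta(\partial_t w_\gamma) - \alpha\partial_t w_\gamma + \partial_t v_\gamma = D\Delta z_\gamma - \alpha z_\gamma + \gamma(u_\gamma - \theta v_\gamma)$ by \eqref{a2PL}. Integration by parts then gives $\frac1\gamma\int_\Omega z_\gamma\partial_t z_\gamma = -\frac{D}{\gamma}\|\nabla z_\gamma\|_2^2 - \frac{\alpha}{\gamma}\|z_\gamma\|_2^2 - \int_\Omega(\theta v_\gamma - u_\gamma)z_\gamma\,\mathrm{d}x$. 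The term $-\int_\Omega(\theta v_\gamma - u_\gamma)z_\gamma\,\mathrm{d}x = -\int_\Omega(\theta v_\gamma - u_\gamma)\partial_t w_\gamma\,\mathrm{d}x$ must combine with the $\int_\Omega v_\gamma\partial_t w_\gamma\,\mathrm{d}x$ from the previous computation and the $\int_\Omega(u_\gamma+v_\gamma)\partial_t w_\gamma\,\mathrm{d}x$ from Step 1; here one has to be careful with the factor $(1+\theta)$: multiplying the $\tfrac12(D\|\nabla w_\gamma\|_2^2+\alpha\|w_\gamma\|_2^2)$ identity by $(1+\theta)$ produces $-(1+\theta)\|\partial_t w_\gamma\|_2^2 + (1+\theta)\int v_\gamma\partial_t w_\gamma$, and the bookkeeping of all the $\partial_t w_\gamma$ coefficients — namely $-(1+\theta)$, $-\alpha/\gamma$, and the cross-terms with $v_\gamma$, $\theta v_\gamma - u_\gamma$, $u_\gamma+v_\gamma$ — should collapse to exactly $-(1+\theta+\alpha/\gamma)\|z_\gamma\|_2^2$, matching the last term of $\mathcal{D}_\gamma$, while $-\frac{D}{\gamma}\|\nabla z_\gamma\|_2^2$ matches the third term.

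\textbf{Step 3: assembling and non-negativity.}
Summing the contributions of Steps 1 and 2 should yield $\frac{\mathrm{d}}{\mathrm{d}t}\mathcal{L}_\gamma(u_\gamma,v_\gamma,w_\gamma) = -\mathcal{D}_\gamma(u_\gamma,v_\gamma,w_\gamma)$ with $\mathcal{D}_\gamma$ as in \eqref{ePL}. Non-negativity of $\mathcal{D}_\gamma$ is then immediate: the first term is $\geq 0$ since $u_\gamma\geq 0$; the second is $\geq 0$ because $(r-s)(\ln r - \ln s)\geq 0$ for $r,s>0$ (monotonicity of $\ln$), applied with $r=\theta v_\gamma$, $s=u_\gamma$; and the remaining two terms are manifestly non-negative. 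I expect the main obstacle to be purely bookkeeping: correctly tracking the numerous $\partial_t w_\gamma$-cross-terms across the three blocks so that everything that is not an obvious square cancels, and in particular making sure the factor $(1+\theta)$ in front of the gradient energy and the identity $\partial_t w_\gamma = D\Delta w_\gamma - \alpha w_\gamma + v_\gamma$ are used consistently to turn $\int(-D\Delta w_\gamma + \alpha w_\gamma)\partial_t w_\gamma$ into $-\|\partial_t w_\gamma\|_2^2 + \int v_\gamma\partial_t w_\gamma$. A minor technical point is justifying the time-differentiation and the integrations by parts, which is legitimate on $(0,\infty)$ thanks to the smoothing properties in \Cref{prop:wp} (namely $u_\gamma, w_\gamma \in C((0,\infty),W^2_{3,\mb}(\Omega))\cap C^1((0,\infty),L_3(\Omega))$ and $v_\gamma\in C^1([0,\infty),W^1_{3,+}(\Omega))$); strict positivity of $u_\gamma$ and $v_\gamma$ on $(0,\infty)$, needed to make sense of $\ln u_\gamma$ and $\ln(\theta v_\gamma)$ and their gradients, follows from the parabolic strong maximum principle for \eqref{a1PL} and from integrating \eqref{a2PL} in time, respectively, exactly as in \cite{LaSt2021}.
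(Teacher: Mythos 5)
Your computation is correct and is precisely the adaptation of \cite[Lemma~2.1]{LaSt2021} that the paper invokes without spelling out: the three blocks of $\mathcal{L}_\gamma$ are differentiated exactly as you describe, and the $\partial_t w_\gamma$-cross-terms indeed cancel since $(1+\theta)v_\gamma-(\theta v_\gamma-u_\gamma)-(u_\gamma+v_\gamma)=0$, leaving $-\mathcal{D}_\gamma$ as claimed. Your handling of the regularity and positivity issues (smoothing from \Cref{prop:wp}, strong maximum principle for $u_\gamma$, Duhamel for $v_\gamma$) matches what the cited reference relies on.
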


From now on, we fix
\begin{equation}
	M \in (0,4\pi (1+\theta)D) \label{eq:ge0}
\end{equation}
and consider $(u_0,v_0,w_0)\in W_{3,+}^1(\Omega;\mathbb{R}^3)$ with $w_0\in W_2^2(\Omega)$ satisfying~\eqref{mass0}. For $\gamma>0$, we denote the corresponding solution to~\eqref{aPL}--\eqref{iPL} provided by \Cref{prop:wp} by $(u_\gamma,v_\gamma,w_\gamma)$ and set $n_\gamma := u_\gamma+v_\gamma$. By~\eqref{mass}, there holds
\begin{equation}
	\|n_\gamma(t)\|_1 = M, \qquad t\ge 0. \label{eq:ge1}
\end{equation}

We begin with a lower bound on $\mathcal{L}_\gamma(u_\gamma,v_\gamma,w_\gamma)$ which is obtained as in \cite[Lemma~4.3]{LaSt2021} with the help of the Moser-Trudinger inequality recalled in \Cref{prop:ap}.

\begin{lemma}\label{lem:ge1}
	For $\gamma>0$ and $t\ge 0$,
	\begin{align*}
		\mathcal{L}_\gamma(u_\gamma(t),v_\gamma(t),w_\gamma(t)) & \ge \frac{4\pi (1+\theta) D - M}{8\pi} \|\nabla w_\gamma(t)\|_2^2 + \frac{\alpha(1+\theta)}{2} \|w_\gamma(t)\|_2^2 \\
		&  \qquad\qquad + \frac{\|\partial_t w_\gamma(t)\|_2^2}{2\gamma} - C_2.
	\end{align*}
\end{lemma}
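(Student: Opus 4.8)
The plan is to bound from below the three sign-indefinite contributions to $\mathcal{L}_\gamma(u_\gamma,v_\gamma,w_\gamma)$, namely the entropy terms $\int_\Omega(L(u_\gamma)+L_\theta(v_\gamma))\,\mathrm{d}x$ and the coupling term $-\int_\Omega (u_\gamma+v_\gamma)w_\gamma\,\mathrm{d}x = -\int_\Omega n_\gamma w_\gamma\,\mathrm{d}x$, using the Moser--Trudinger inequality from \Cref{prop:ap} together with the mass constraint~\eqref{eq:ge1}. First I would recall that $L(r)\ge 0$ and $L_\theta(r)\ge 0$ for all $r\ge 0$, so the only term that must genuinely be controlled is $-\int_\Omega n_\gamma w_\gamma\,\mathrm{d}x$; however, to obtain the coefficient $\tfrac{4\pi(1+\theta)D-M}{8\pi}$ in front of $\|\nabla w_\gamma\|_2^2$ one cannot simply discard the entropy and must instead play it off against the coupling term. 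The key identity to exploit is that, writing $n_\gamma=u_\gamma+v_\gamma$ and using the convexity of $L$ and $L_\theta$, one has a pointwise lower bound of the form $L(u_\gamma)+L_\theta(v_\gamma)\ge c\, n_\gamma\ln n_\gamma - C n_\gamma - C$ for suitable constants (via Jensen/Young applied to the splitting $u_\gamma=\tfrac{\theta}{1+\theta}n_\gamma+\cdots$), so that the entropy dominates a genuine $n_\gamma\ln n_\gamma$ term.

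The main step is then the standard Moser--Trudinger argument: for the coupling term we write, using Young's inequality in the Orlicz-dual form $ab\le a\ln a - a + e^b$, that for any $\varepsilon>0$
\begin{equation*}
	\int_\Omega n_\gamma w_\gamma\,\mathrm{d}x \le \varepsilon \int_\Omega n_\gamma \ln\!\Big(\frac{n_\gamma}{\|n_\gamma\|_1}\Big)\,\mathrm{d}x + \varepsilon\,\|n_\gamma\|_1 \ln\!\Big(\int_\Omega e^{w_\gamma/\varepsilon}\,\mathrm{d}x\Big),
\end{equation*}
and then invoke the Moser--Trudinger inequality to bound $\int_\Omega e^{w_\gamma/\varepsilon}\,\mathrm{d}x$ by $C\exp\big(\tfrac{1}{16\pi\varepsilon^2}\|\nabla w_\gamma\|_2^2 + C\|w_\gamma\|_1\big)$ (or the analogous form with the mean of $w_\gamma$). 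Choosing $\varepsilon$ so that the entropy term absorbed here matches the available entropy from $L(u_\gamma)+L_\theta(v_\gamma)$ — concretely $\varepsilon$ proportional to $\|n_\gamma\|_1/(4\pi(1+\theta)D)\cdot$ something, tuned so that the exponent becomes $\tfrac{M}{8\pi(1+\theta)D}\|\nabla w_\gamma\|_2^2$ — yields, after multiplying by $(1+\theta)D$ and comparing with the term $\tfrac{(1+\theta)D}{2}\|\nabla w_\gamma\|_2^2$ already present in $\mathcal{L}_\gamma$, the stated coefficient $\tfrac{4\pi(1+\theta)D-M}{8\pi}$. The $\|w_\gamma\|_1$ contributions produced along the way are absorbed into $\tfrac{\alpha(1+\theta)}{2}\|w_\gamma\|_2^2$ via Young's inequality at the cost of enlarging the additive constant $C_2$, and the term $\tfrac{1}{2\gamma}\|\partial_t w_\gamma\|_2^2$ is untouched since $\partial_t w_\gamma = D\Delta w_\gamma-\alpha w_\gamma+v_\gamma$ appears in $\mathcal{L}_\gamma$ exactly in that form (this uses~\eqref{a3PL}).

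The only genuine obstacle is the bookkeeping of constants in the Moser--Trudinger step: one must track the precise relationship between the parameter in Young's inequality, the sharp constant $4\pi$ (equivalently $1/(16\pi)$) in the Moser--Trudinger inequality, and the factor $(1+\theta)D$, so that the threshold $M<4\pi(1+\theta)D$ emerges exactly and the coefficient of $\|\nabla w_\gamma\|_2^2$ is nonnegative precisely in that regime. Since this is an adaptation of \cite[Lemma~4.3]{LaSt2021} to the present parameters $\theta$, $D$, $\alpha$ (all set to $1$ in related works), I would follow that computation line by line, checking only that the rescaling $v\mapsto\theta v$ hidden in $L_\theta$ and the factor $1+\theta$ in $\mathcal{L}_\gamma$ are consistent; the rest is routine and I would not belabour it.
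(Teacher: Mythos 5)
Your plan coincides with the paper's proof, which is precisely the adaptation of \cite[Lemma~4.3]{LaSt2021} that the authors invoke: bound $\int_\Omega n_\gamma w_\gamma\,\mathrm{d}x$ by the relative-entropy (Jensen) duality together with the Moser--Trudinger inequality of \Cref{prop:ap}, cancel the resulting $\int_\Omega n_\gamma\ln n_\gamma\,\mathrm{d}x$ against $\|L(u_\gamma)\|_1+\|L_\theta(v_\gamma)\|_1$ using the convexity of $r\mapsto r\ln r$, keep the $\tfrac{1}{2\gamma}\|\partial_t w_\gamma\|_2^2$ term via~\eqref{a3PL}, and absorb the leftover $\|w_\gamma\|_1$ term. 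Two details to pin down when executing it: the pointwise bound must read $L(u_\gamma)+L_\theta(v_\gamma)\ge n_\gamma\ln n_\gamma-Cn_\gamma$ with constant exactly $1$ in front of $n_\gamma\ln n_\gamma$ (forcing the Young parameter $\varepsilon=1$), and the Moser--Trudinger exponent is $\|\nabla z\|_2^2/(8\pi)$ as in \Cref{prop:ap} rather than $1/(16\pi)$, since otherwise the sharp coefficient $(4\pi(1+\theta)D-M)/(8\pi)$ is not obtained.
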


From \Cref{lem:ge1}, we derive the same estimates as in \Cref{lem:ste1} but valid for arbitrary positive times. 

\begin{lemma}\label{lem:ge2}
	For $\gamma\ge 1$ and $t\ge 0$, 	
	\begin{align*}
		\|L(u_\gamma(t))\|_1 + \|L_\theta(v_\gamma(t))\|_1 + \|w_\gamma(t)\|_{W_2^1} & \le C_3, \\
		\int_0^\infty \|\partial_t w_\gamma(s)\|_2^2\ \mathrm{d}s & \le C_3.
	\end{align*}
\end{lemma}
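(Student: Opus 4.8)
\textbf{Proof plan for \Cref{lem:ge2}.}

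The plan is to combine the energy identity~\eqref{e4.2.1} from \Cref{prop:lf} with the lower bound from \Cref{lem:ge1} in the spirit of \cite[Lemma~4.4]{LaSt2021}. First I would integrate~\eqref{e4.2.1} in time over $(0,t)$ and use the non-negativity of $\mathcal{D}_\gamma$, see~\eqref{ePL}, to get
\begin{equation*}
	\mathcal{L}_\gamma(u_\gamma(t),v_\gamma(t),w_\gamma(t)) + \int_0^t \mathcal{D}_\gamma(u_\gamma(s),v_\gamma(s),w_\gamma(s))\ \mathrm{d}s = \mathcal{L}_\gamma(u_0,v_0,w_0), \qquad t\ge 0.
\end{equation*}
Since $w_0\in W_2^2(\Omega)$ and $\gamma\ge 1$, the last term in~\eqref{dPL} is controlled by $\tfrac12 \|D\Delta w_0 - \alpha w_0 + v_0\|_2^2$, so $\mathcal{L}_\gamma(u_0,v_0,w_0)$ is bounded above by a constant $C$ independent of $\gamma\ge 1$; combining this with \Cref{lem:ge1} yields
\begin{equation*}
	\frac{4\pi(1+\theta)D-M}{8\pi}\|\nabla w_\gamma(t)\|_2^2 + \frac{\alpha(1+\theta)}{2}\|w_\gamma(t)\|_2^2 + \frac{\|\partial_t w_\gamma(t)\|_2^2}{2\gamma} \le C, \qquad t\ge 0.
\end{equation*}
This already gives the bound on $\|w_\gamma(t)\|_{W_2^1}$ uniformly in $\gamma\ge 1$ and $t\ge 0$, because $4\pi(1+\theta)D-M>0$ by the smallness assumption~\eqref{eq:ge0}.

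It remains to bound $\|L(u_\gamma(t))\|_1 + \|L_\theta(v_\gamma(t))\|_1$. For this I would go back to the definition~\eqref{dPL} of $\mathcal{L}_\gamma$ and write
\begin{equation*}
	\|L(u_\gamma(t))\|_1 + \|L_\theta(v_\gamma(t))\|_1 = \mathcal{L}_\gamma(u_\gamma(t),v_\gamma(t),w_\gamma(t)) + \int_\Omega (u_\gamma+v_\gamma)(t)\, w_\gamma(t)\ \mathrm{d}x - \frac{1+\theta}{2}\big(D\|\nabla w_\gamma(t)\|_2^2 + \alpha\|w_\gamma(t)\|_2^2\big) - \frac{1}{2\gamma}\|D\Delta w_\gamma - \alpha w_\gamma + v_\gamma\|_2^2(t).
\end{equation*}
The first term on the right-hand side equals $\mathcal{L}_\gamma(u_0,v_0,w_0)$ minus the non-negative dissipation integral, hence is bounded above by $C$; the third and fourth terms are non-positive and can be dropped; and the cross term $\int_\Omega n_\gamma w_\gamma\ \mathrm{d}x$ is estimated via Hölder's and the Gagliardo–Nirenberg (or Moser–Trudinger) inequality together with the already-established bound on $\|w_\gamma(t)\|_{W_2^1}$ and the mass conservation $\|n_\gamma(t)\|_1=M$. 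Concretely one uses $\int_\Omega n_\gamma w_\gamma\ \mathrm{d}x \le \|n_\gamma\|_1^{1/2}\, \|\sqrt{n_\gamma}\,w_\gamma\|_2 \cdots$, or more simply the elementary inequality $rw \le L(r) + e^w$ applied pointwise to $r=u_\gamma$, which reduces $\int_\Omega u_\gamma w_\gamma\ \mathrm{d}x$ to $\|L(u_\gamma)\|_1 + \|u_\gamma\|_1 + \int_\Omega e^{w_\gamma}\ \mathrm{d}x$, the last integral being finite and bounded by the Moser–Trudinger inequality (\Cref{prop:ap}) since $\|\nabla w_\gamma\|_2$ and $\|w_\gamma\|_2$ are controlled; an analogous bound handles $\int_\Omega v_\gamma w_\gamma\ \mathrm{d}x$ after replacing $w_\gamma$ by $w_\gamma/\theta$ and using $L_\theta$. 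Absorbing the resulting $\|L(u_\gamma)\|_1$ and $\|L_\theta(v_\gamma)\|_1$ contributions with a small coefficient on the left-hand side then closes the estimate and produces the constant $C_3$.

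The main obstacle is the cross term $\int_\Omega n_\gamma w_\gamma\ \mathrm{d}x$: it must be absorbed into $\|L(u_\gamma)\|_1 + \|L_\theta(v_\gamma)\|_1$ with a coefficient strictly less than $1$, which is exactly where the subcriticality of the mass, $M<4\pi(1+\theta)D$, is indispensable — it guarantees, through the sharp constant in the Moser–Trudinger inequality, that $\int_\Omega e^{\lambda w_\gamma}\ \mathrm{d}x$ stays bounded for the relevant $\lambda$ dictated by $M$. Once this balancing is done, the bound on $\int_0^\infty \|\partial_t w_\gamma(s)\|_2^2\ \mathrm{d}s$ follows directly by dropping all other non-negative terms in $\int_0^\infty \mathcal{D}_\gamma\ \mathrm{d}s \le \mathcal{L}_\gamma(u_0,v_0,w_0) + C_2 \le C$ and recognizing $\|D\Delta w_\gamma - \alpha w_\gamma + v_\gamma\|_2^2 = \|\partial_t w_\gamma\|_2^2$ from~\eqref{a3PL}, times the coefficient $1+\theta+\alpha/\gamma \ge 1+\theta$.
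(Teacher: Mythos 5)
Your strategy coincides with the paper's: integrate the energy identity of \Cref{prop:lf}, bound $\mathcal{L}_\gamma(u_0,v_0,w_0)\le \mathcal{L}_1(u_0,v_0,w_0)$ for $\gamma\ge 1$, combine with the lower bound of \Cref{lem:ge1} and the subcriticality~\eqref{eq:ge0} to get the $W_2^1$-bound on $w_\gamma$ and the dissipation bound on $\int_0^\infty\|\partial_t w_\gamma\|_2^2\,\mathrm{d}s$ (via $\mathcal{D}_\gamma\ge(1+\theta)\|\partial_t w_\gamma\|_2^2$), and then recover $\|L(u_\gamma)\|_1+\|L_\theta(v_\gamma)\|_1$ from~\eqref{dPL} by estimating the cross term $\int_\Omega n_\gamma w_\gamma\,\mathrm{d}x$ with a Young-type inequality and Moser--Trudinger. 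There is, however, one step that fails as written: the inequality $rw\le L(r)+e^w$ reproduces $\|L(u_\gamma)\|_1$ on the right-hand side with coefficient exactly $1$, equal to its coefficient on the left, so the absorption you yourself single out as the main obstacle cannot be performed with this tool. The missing ingredient is the halving trick used in the paper: write $u_\gamma w_\gamma = 2\,(u_\gamma/2)\,w_\gamma\le L(u_\gamma/2)+e^{2w_\gamma}-1$ together with $L(r/2)\le \tfrac12\big(L(r)+1\big)$ (and similarly for $v_\gamma$ with $L_\theta$), which produces the coefficient $\tfrac12$ needed for absorption; the price is that \Cref{prop:ap} must be applied to $2w_\gamma$ instead of $w_\gamma$, which is harmless once $\|w_\gamma\|_{W_2^1}\le C$ is known.

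A secondary, conceptual correction: the subcriticality $M<4\pi(1+\theta)D$ is not what keeps $\int_\Omega e^{\lambda w_\gamma}\,\mathrm{d}x$ bounded at the absorption stage --- once $\|w_\gamma\|_{W_2^1}$ is uniformly bounded, \Cref{prop:ap} controls that integral for any fixed $\lambda$, with no sharpness of the Moser--Trudinger constant required. The mass condition acts earlier, inside \Cref{lem:ge1}, where the sharp constant $8\pi$ is what renders the coefficient $(4\pi(1+\theta)D-M)/8\pi$ of $\|\nabla w_\gamma\|_2^2$ positive; you do invoke this correctly when deriving the $W_2^1$-bound, so this is a misstatement of where the hypothesis enters rather than an additional gap.
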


\begin{proof}
	We argue as in~\cite[Lemma~4.4]{LaSt2021}. Let $t>0$. On the one hand, it follows from \Cref{prop:lf} that
	\begin{equation*}
		\mathcal{L}_\gamma(u_\gamma(t),v_\gamma(t),w_\gamma(t)) + \int_0^t \mathcal{D}_\gamma(u_\gamma(s),v_\gamma(s),w_\gamma(s))\ \mathrm{d}s \le \mathcal{L}_\gamma(u_0,v_0,w_0) \le \mathcal{L}_1(u_0,v_0,w_0).
	\end{equation*}
	On the other hand, we infer from \Cref{lem:ge1} that
	\begin{equation*}
		\mathcal{L}_\gamma(u_\gamma(t),v_\gamma(t),w_\gamma(t)) \ge \frac{4\pi (1+\theta) D - M}{8\pi} \|\nabla w_\gamma(t)\|_2^2 + \frac{\alpha(1+\theta)}{2} \|w_\gamma(t)\|_2^2- C_2
	\end{equation*}
	and
	\begin{equation*}
		\mathcal{D}_\gamma(u_\gamma(s),v_\gamma(s),w_\gamma(s)) \ge (1+\theta) \|(D \Delta w_\gamma - \alpha w_\gamma + v_\gamma)(s)\|_2^2 = (1+\theta) \|\partial_t w_\gamma(s)\|_2^2, \qquad s\ge 0.  
	\end{equation*}
	Combining the above inequalities gives
	\begin{equation}
		\|w_\gamma(t)\|_{W_2^1}^2 + \int_0^t \|\partial_t w_\gamma(s)\|_2^2\ \mathrm{d}s \le C. \label{eq:ge4}
	\end{equation}
	Next, by~\eqref{dPL}, \Cref{prop:lf}, and Young's inequality $ab \le L(a) + e^{b}-1$,
	\begin{align*}
		& \|L(u_\gamma(t))\|_1 + \|L_\theta(v_\gamma(t))\|_1 \\
		& \qquad \le \mathcal{L}_\gamma(u_\gamma(t),v_\gamma(t),w_\gamma(t)) + \int_\Omega (u_\gamma+v_\gamma)(t) w_\gamma(t)\ \mathrm{d}x \\
		& \qquad \le \mathcal{L}_\gamma(u_0,v_0,w_0) + \int_\Omega \left[ L\left( \frac{u_\gamma(t)}{2} \right) + \frac{1}{\theta} L\left( \frac{\theta v_\gamma(t)}{2} \right) + \frac{1+\theta}{\theta} \big( e^{2w_\gamma(t)}-1 \big) \right]\ \mathrm{d}x \\
		& \qquad \le \mathcal{L}_1(u_0,v_0,w_0) + \int_\Omega \left[ \frac{L(u_\gamma(t))+1}{2} + \frac{L(\theta v_\gamma(t))+1}{2\theta}  + \frac{1+\theta}{\theta} e^{2w_\gamma(t)} \right]\ \mathrm{d}x \\
		& \qquad \le C + \frac{1}{2} \Big[ \|L(u_\gamma(t))\|_1 + \|L_\theta(v_\gamma(t))\|_1 \Big] + \frac{1+\theta}{\theta} \left\| e^{2w_\gamma(t)} \right\|_1.
	\end{align*}
	We now use the Moser-Trudinger inequality recalled in \Cref{prop:ap}, along with~\eqref{eq:ge4}, to estimate the last term on the right-hand side of the above inequality and conclude that
	\begin{equation*}
		\|L(u_\gamma(t))\|_1 + \|L_\theta(v_\gamma(t))\|_1 \le C,
	\end{equation*}
	thereby completing the proof of \Cref{lem:ge2}.
\end{proof}

\subsection{Proof of \Cref{prop:ue0}}

\begin{proof}[Proof of \Cref{prop:ue0}]
Setting $T_\infty=\infty$ when $M\in (0,4\pi D(1+\theta))$, we consider $T\in (0,T_\infty)$ and recall that \Cref{lem:ste1} and \Cref{lem:ge2} imply that, for $\gamma\ge 1$,
\begin{align}
	\|L(u_\gamma(t))\|_1 + \|L_\theta(v_\gamma(t))\|_1 + \|w_\gamma(t)\|_{W_2^1} & \le C_4(T), \qquad t\in (0,T), \label{eq:ue01}\\
	\int_0^T \|\partial_t w_\gamma(t)\|_2^2\ \mathrm{d}t & \le C_4(T). \label{eq:ue02}
\end{align} 
Let $t\in (0,T]$. On the one hand, it follows from~\eqref{aPL} that
\begin{align*}
	\frac{\mathrm{d}}{\mathrm{d}t} \Big( \|u_\gamma\|_2^2 + \theta \|v_\gamma\|_2^2 \Big) & = - 2 \|\nabla u_\gamma\|_2^2 + 2 \int_\Omega u_\gamma \nabla u_\gamma \cdot \nabla w_\gamma\ \mathrm{d}x - 2 \gamma \|e_\gamma\|_2^2 \\
	& = - 2 \|\nabla u_\gamma\|_2^2 - 2 \gamma \|e_\gamma\|_2^2 - \int_\Omega u_\gamma^2 \Delta w_\gamma\ \mathrm{d}x \\
	& = - 2 \|\nabla u_\gamma\|_2^2 - 2 \gamma \|e_\gamma\|_2^2 + \frac{1}{D} \int_\Omega u_\gamma^2 \Big( v_\gamma - \alpha w_\gamma - \partial_t w_\gamma \Big)\ \mathrm{d}x.
\end{align*}
Hence, by H\"older's inequality,
\begin{equation*}
	\frac{\mathrm{d}}{\mathrm{d}t} \Big( \|u_\gamma\|_2^2 + \theta \|v_\gamma\|_2^2 \Big) \le - 2 \|\nabla u_\gamma\|_2^2 - 2 \gamma \|e_\gamma\|_2^2 + \frac{1}{D} \int_\Omega u_\gamma^2 v_\gamma\ \mathrm{d}x + \frac{1}{D} \|u_\gamma\|_4^2 \|\partial_t w_\gamma\|_2. 
\end{equation*}
On the other hand, we deduce from~\eqref{a2PL} that
\begin{equation*}
	\frac{1}{\gamma} \frac{\mathrm{d}}{\mathrm{d}t} \|v_\gamma\|_3^3 = 3 \int_\Omega u_\gamma v_\gamma^2\ \mathrm{d}x - 3 \theta \|v_\gamma\|_3^3.
\end{equation*}
Combining the previous two inequalities and introducing 
\begin{equation*}
	Y_\gamma := 1 + \|u_\gamma\|_2^2 + \theta \|v_\gamma\|_2^2 + \frac{1}{\gamma} \|v_\gamma\|_3^3
\end{equation*}
lead us to the differential inequality
\begin{equation}
	\begin{split}
		\frac{\mathrm{d}Y_\gamma}{\mathrm{d}t} & \le - 2 \|\nabla u_\gamma\|_2^2 - 2 \gamma \|e_\gamma\|_2^2 - 3 \theta \|v_\gamma\|_3^3 \\
		& \qquad + \frac{1}{D} \int_\Omega u_\gamma^2 v_\gamma\ \mathrm{d}x + 3 \int_\Omega u_\gamma v_\gamma^2\ \mathrm{d}x + \frac{1}{D} \|u_\gamma\|_4^2 \|\partial_t w_\gamma\|_2.
	\end{split}
	\label{eq:ge7}
\end{equation}
We now infer from Young's inequality that
\begin{align*}
	\frac{1}{D} \int_\Omega u_\gamma^2 v_\gamma\ \mathrm{d}x + 3 \int_\Omega u_\gamma v_\gamma^2\ \mathrm{d}x & \le \frac{\theta}{3} \|v_\gamma\|_3^3 + \frac{2}{3} \frac{\|u_\gamma\|_3^3}{\sqrt{\theta D^3}} + \frac{2\theta}{3}  \|v_\gamma\|_3^3 + \frac{9}{\theta^2} \|u_\gamma\|_3^3 \\
	& \le \theta \|v_\gamma\|_3^3 + C \|u_\gamma\|_3^3,	
\end{align*}
while the Gagliardo-Nirenberg inequality~\eqref{eq:GN}, along with Young's inequality, entails that
\begin{align*}
	\frac{1}{D} \|u_\gamma\|_4^2 \|\partial_t w_\gamma\|_2 & \le \frac{c_0^2}{D} \|u_\gamma\|_{W_2^1} \|u_\gamma\|_2 \|\partial_t w_\gamma\|_2 \le \frac{c_0^2}{D} \Big( \|\nabla u_\gamma\|_2 \|u_\gamma\|_2 + \|u_\gamma\|_2^2 \Big) \|\partial_t w_\gamma\|_2 \\
	& \le \frac{1}{2} \|\nabla u_\gamma\|_2^2 + C \|u_\gamma\|_2^2 \|\partial_t w_\gamma\|_2^2 + C \|u_\gamma\|_2^2 \|\partial_t w_\gamma\|_2 \\
	& \le \frac{1}{2} \|\nabla u_\gamma\|_2^2 + C \|u_\gamma\|_2^2 \big( 1 + \|\partial_t w_\gamma\|_2^2 \big).
\end{align*}
Collecting the above estimates gives
\begin{equation}
	\frac{\mathrm{d}Y_\gamma}{\mathrm{d}t} + \frac{3}{2} \|\nabla u_\gamma\|_2^2 + 2 \gamma \|e_\gamma\|_2^2 + 2 \theta \|v_\gamma\|_3^3 \le C_5 \|u_\gamma\|_3^3 + C_5 \|u_\gamma\|_2^2 \big( 1 + \|\partial_t w_\gamma\|_2^2 \big). \label{eq:ge8}
\end{equation}
Finally, let $\eta>0$. By \cite[Equation~(22)]{BHN1994}, there exists $C_6(\eta)>0$ (depending actually only on $\Omega$ and $\eta$) such that
\begin{equation*}
	\|z\|_3^3 \le \eta \|z \ln{|z|}\|_1 \|z\|_{W_2^1}^2 + C_6 (\eta) \|z\|_1, \qquad z\in W_2^1(\Omega).
\end{equation*}
Since $r|\ln{r}| \le L(r) + r$ for $r\ge 0$, we deduce from~\eqref{mass}, \eqref{eq:ue01}, and the above functional inequality that
\begin{align*}
	C_5 \|u_\gamma\|_3^3 & \le \eta C_5 \|L(u_\gamma) + u_\gamma\|_1 \|u_\gamma\|_{W_2^1}^2 + C_5 C_6(\eta) \|u_\gamma\|_1 \\
	& \le \eta C_5(C_4(T)+M) \|\nabla u_\gamma\|_2^2 + \eta C_5(C_4(T)+M) \|u_\gamma\|_2^2 + M C_5 C_6(\eta).
\end{align*}
Choosing $\eta=1/[2 C_5(C_4(T)+M)]$ in the above inequality and inserting the outcome in~\eqref{eq:ge8}, we end up with
\begin{equation}
	\frac{\mathrm{d}Y_\gamma}{\mathrm{d}t} + \|\nabla u_\gamma\|_2^2 + 2 \gamma \|e_\gamma\|_2^2 + 2 \theta \|v_\gamma\|_3^3 \le 
	C_7(T) \big( 1 + \|u_\gamma\|_2^2 \big) \big( 1 + \|\partial_t w_\gamma\|_2^2 \Big). \label{eq:ge9}
\end{equation}
A first consequence of~\eqref{eq:ge9} is that
\begin{equation*}
	\frac{\mathrm{d}Y_\gamma}{\mathrm{d}t}\le C_7(T)  \big( 1 + \|\partial_t w_\gamma\|_2^2 \Big) Y_\gamma, \qquad t\in (0,T].
\end{equation*}
Hence, after integrating with respect to time and using~\eqref{eq:ue02}, we obtain
\begin{equation}
	Y_\gamma(t) \le Y_\gamma(0) \exp\left\{ C_7(T) \left( t + \int_0^t \|\partial_t w_\gamma(s)\|_2^2\ \mathrm{d}s \right)\right\} \le C e^{C_7(T) t + C_7(T) C_4(T)} \le C_8(T) \label{eq:ge10}
\end{equation}
for $t\in [0,T]$, from which~\eqref{eq:ues} follows.
	
We next integrate~\eqref{eq:ge9} with respect to time and use the non-negativity of $Y_\gamma$, along with~\eqref{eq:ue02} and~\eqref{eq:ge10}, to find
\begin{align*}
	\int_0^t \Big[ \|\nabla u_\gamma(s)\|_2^2 + \gamma \|e_\gamma(s)\|_2^2 \Big]\ \mathrm{d}s & \le Y_\gamma(0) + C_7(T) \sup_{s\in [0,t]}\{1+ \|u_\gamma(s)\|_2^2\} \left( t + \int_0^t \|\partial_t w_\gamma(s)\|_2^2\ \mathrm{d}s \right) \\
	& \le C + C_7(T) C_8(T) (t + C_4(T)),
\end{align*}
and thus complete the proof.
\end{proof}

\section{Convergence: proof of \Cref{thm:sl}}\label{sec:pth}

\begin{proof}[Proof of \Cref{thm:sl}]
	Let $M>0$ and consider initial conditions $(u_0,v_0,w_0)$ satisfying~\eqref{eq:ic} and~\eqref{mass0}. For $\gamma>0$, we denote the solution to \eqref{aPL}--\eqref{iPL} given by \Cref{prop:wp} by $(u_\gamma,v_\gamma,w_\gamma)$ and set $n_\gamma = u_\gamma + v_\gamma$ and $e_\gamma = \theta v_\gamma - u_\gamma$.	According to \Cref{prop:ue0}, there is $T_\infty\in (0,\infty]$ such that the estimates~\eqref{eq:ue} hold true for all $T\in (0,T_\infty)$, having set $T_\infty=\infty$ in Case~(b). We are thus in a position to apply \Cref{prop:bd} for any $T\in (0,T_\infty)$ and use a diagonal process to construct a sequence $(\gamma_j)_{j\ge 1}$, $\gamma_j\to\infty$, and non-negative functions
	\begin{align*}
		& n\in W_{2,\mathrm{loc}}^1([0,T_\infty),W_2^1(\Omega)') \cap L_{\infty,\mathrm{loc}}([0,T_\infty),L_2(\Omega))\cap L_{2,\mathrm{loc}}([0,T_\infty),W_2^1(\Omega)), \\
		& w\in W_{2,\mathrm{loc}}^1([0,T_\infty),L_2(\Omega))\cap L_{\infty,\mathrm{loc}}([0,T_\infty),W_2^1(\Omega)) \cap L_{2,\mathrm{loc}}([0,T_\infty),W_2^2(\Omega)),
	\end{align*}
	such that the convergences~\eqref{eq:cvs}, \eqref{eq:cvi1}, \eqref{eq:cvi2}, and~\eqref{eq:cvi3} hold true for any $T\in (0,T_\infty)$. 
	
	Now, it readily follows from~\eqref{a1PL}, \eqref{a2PL}, and \eqref{bPL} that, for $j\ge 1$, $t\in (0,T_\infty)$, and $\varphi\in W_2^1(\Omega)$, 
	\begin{equation*}
		\int_0^t \langle \partial_t n_{\gamma_j}(s) , \varphi \rangle_{(W_2^1)',W_2^1} \mathrm{d}s + \int_0^t \int_\Omega \nabla\varphi \cdot \big( \nabla u_{\gamma_j} - u_{\gamma_j}
		\nabla w_{\gamma_j} \big)(s)\ \mathrm{d}x\mathrm{d}s = 0.
	\end{equation*}
	Owing to~\eqref{eq:cvs}, \eqref{eq:cvi2}, and~\eqref{eq:cvi3}, it is straightforward to pass to the limit $j\to\infty$ in the above identity and conclude that $n$ solves
	\begin{equation}
		\int_0^t \langle \partial_t n(s) , \varphi \rangle_{(W_2^1)',W_2^1} \mathrm{d}s +  \frac{\theta}{1+\theta} \int_0^t \int_\Omega \nabla\varphi \cdot \big( \nabla n - n \nabla w \big)(s)\ \mathrm{d}x\mathrm{d}s = 0 \label{eq:weakn}
	\end{equation}
	for all $t\in (0,T_\infty)$ and $\varphi\in W_2^1(\Omega)$. Similarly, we infer from~\eqref{a3PL}, \eqref{eq:cvs}, \eqref{eq:cvi1}, and~\eqref{eq:cvi2} that
	\begin{equation}
		\frac{\mathrm{d}}{\mathrm{d}t} \int_\Omega w \varphi\ \mathrm{d}x + D \int_\Omega \nabla w\cdot \nabla\varphi\ \mathrm{d}x + \alpha \int_\Omega w \varphi\ \mathrm{d}x = \frac{1}{1+\theta} \int_\Omega n \varphi\ \mathrm{d}x \label{eq:weakw}
	\end{equation}
	for all $t\in (0,T_\infty)$ and $\varphi\in W_2^1(\Omega)$. Actually, the regularity of $w$ and $n$, together with~\eqref{eq:weakw}, implies that $w$ is a strong solution to 
	\begin{equation}
	\begin{array}{ll}
		\partial_t w - D \Delta w + \alpha w  = \displaystyle{\frac{n}{1+\theta}} & \qquad\text{ in }\;\; (0,T_\infty) \times \Omega,\\[2mm]
		 \nabla w \cdot \mathbf{n} = 0 & \qquad\text{ on }\;\; (0,T_\infty) \times \partial \Omega, \\[2mm]
		 w(0) = w_0 & \qquad\text{ in }\;\; \Omega.
	\end{array}\label{eq:strongw}
	\end{equation}
	In other words, $(n,w)$ is a weak-strong solution to~\eqref{aKS}--\eqref{iKS} on $(0,T)$ in the sense of \Cref{def:c1} below for all $T\in (0,T_\infty)$, which is unique according to \Cref{prop:c2} below. A classical argument then implies that the convergences~\eqref{eq:cvs}, \eqref{eq:cvi1}, \eqref{eq:cvi2}, and~\eqref{eq:cvi3} hold true for all $\gamma\ge 1$ and $T\in (0,T_\infty)$.
	
We are left with improving the convergence of $(w_\gamma)_{\gamma\ge 1}$ from the $L_2$-convergence derived in \Cref{prop:cv} to the $W_2^1$-convergence~\eqref{cvw} claimed in \Cref{thm:sl}. To this end, we infer from~\eqref{a3PL}, \eqref{bPL}, \eqref{eq:strongw}, and H\"older's and Young's inequalities that
	\begin{align*}
		\big\| \partial_t (w_\gamma-w)\big\|_2^2 & + \frac{1}{2} \frac{\mathrm{d}}{\mathrm{d}t} \big[ D \|\nabla(w_\gamma-w)\|_2^2 + \alpha \|w_\gamma-w\|_2^2 \big] \\
		& \hspace{2cm} = \int_\Omega \left( v_\gamma - \frac{n}{1+\theta} \right) \partial_t (w_\gamma-w)\ \mathrm{d}x \\
		& \hspace{2cm} \le \frac{1}{2} \big\| \partial_t (w_\gamma-w)\big\|_2^2 + \frac{1}{2} \left\| v_\gamma - \frac{n}{1+\theta} \right\|_2^2.
		\end{align*}
	Hence,
	\begin{equation*}
		\frac{\mathrm{d}}{\mathrm{d}t} \big[ D \|\nabla(w_\gamma-w)\|_2^2 + \alpha \|w_\gamma-w\|_2^2 \big] \le  \left\| v_\gamma - \frac{n}{1+\theta} \right\|_2^2,
	\end{equation*}
	from which we deduce, after integrating with respect to time, that
	\begin{equation*}
		D \|\nabla(w_\gamma-w)(t)\|_2^2 + \alpha \|(w_\gamma-w)(t)\|_2^2 \le \int_0^t  \left\| \left( v_\gamma - \frac{n}{1+\theta} \right)(s) \right\|_2^2\ \mathrm{d}s 
	\end{equation*}
	for $t\in [0,T_\infty)$. Consequently, for $T\in (0,T_\infty)$,
	\begin{equation*}
		\sup_{t\in [0,T]} \big[ D \|\nabla(w_\gamma-w)(t)\|_2^2 + \alpha \|(w_\gamma-w)(t)\|_2^2 \big] \le \int_0^T  \left\| \left( v_\gamma - \frac{n}{1+\theta} \right)(s) \right\|_2^2\ \mathrm{d}s.
	\end{equation*}
	Since the right-hand side of the above inequality converges to zero as $\gamma\to\infty$ by~\eqref{eq:cvi2}, we readily conclude that
	\begin{equation*}
		\lim_{\gamma\to\infty} \sup_{t\in [0,T]} \big[ D \|\nabla(w_\gamma-w)(t)\|_2^2 + \alpha \|(w_\gamma-w)(t)\|_2^2 \big] = 0.
	\end{equation*}
	The convergence~\eqref{cvw} then follows due to the positivity of~$D$ and~$\alpha$, and the proof of  \Cref{thm:sl} is complete.
\end{proof}

\section{Unboundedness: proof of \Cref{thm:ub}}\label{sec:ub}

The main step of the proof of \Cref{thm:ub} is the following result asserting that  the estimates~\eqref{ubd} can be derived from a bound on the $L_2$-norm of $v_\gamma$, whenever available.

\begin{lemma}\label{lem:ub1}
	Consider $(u_0,v_0,w_0) \in W_{3,+}^1(\Omega;\mathbb{R}^3)$ and, for $\gamma>0$, denote the solution to \eqref{aPL}--\eqref{iPL} given by \Cref{prop:wp} by $(u_\gamma,v_\gamma,w_\gamma)$. We assume that there are $T\in (0,\infty)$ and $K>0$ such that
	\begin{equation}
		\|v_\gamma(t)\|_2 \le K \qquad \mbox{for all } t\in (0,T), \ \gamma\ge 1. \label{eq:e20}
	\end{equation}
	Then $(u_\gamma,v_\gamma,w_\gamma)$ satisfies~\eqref{ubd} with $e_\gamma = \theta v_\gamma - u_\gamma$.
\end{lemma}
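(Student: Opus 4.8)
## Proof strategy for Lemma 6.2

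The plan is to show that the single assumption \eqref{eq:e20}---an $L_2$-bound on $v_\gamma$ uniform in $\gamma\ge 1$ and $t\in(0,T)$---propagates to the full package of estimates in \eqref{ubd}. The natural order is: first upgrade the $v_\gamma$-bound to a bound on $w_\gamma$ in $L_\infty((0,T),W_2^1(\Omega))$ via parabolic regularity applied to \eqref{a3PL}; then bound $u_\gamma$ in $L_\infty((0,T),L_2(\Omega))$ and recover the dissipation integrals (the $\nabla u_\gamma$, $\partial_t w_\gamma$, and $\gamma\|e_\gamma\|_2^2$ terms) by revisiting the same differential inequality used in \Cref{prop:ue0}; and finally note that $\|v_\gamma\|_2\le K$ already gives the $v_\gamma$-part of \eqref{ubd1}.

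\textbf{Step 1: control of $w_\gamma$.} Since $\partial_t w_\gamma - D\Delta w_\gamma + \alpha w_\gamma = v_\gamma$ with homogeneous Neumann data and $w_0\in W_2^2(\Omega)\hookrightarrow W_2^1(\Omega)$, and $(v_\gamma)_\gamma$ is bounded in $L_\infty((0,T),L_2(\Omega))$ by \eqref{eq:e20}, standard smoothing estimates for the Neumann heat semigroup give a bound on $(w_\gamma)_\gamma$ in $L_\infty((0,T),W_2^1(\Omega))$ depending only on $K$, $\|w_0\|_{W_2^1}$, $D$, $\alpha$, $T$, and $\Omega$. (Alternatively, test \eqref{a3PL} by $-\Delta w_\gamma + w_\gamma$ and use Gronwall, exactly as in the proof of \Cref{lem:ste1}.) This gives the $w_\gamma$-term in \eqref{ubd1}. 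Parabolic regularity in $L_2$ then also yields $\int_0^T\|w_\gamma\|_{W_2^2}^2\,\mathrm{d}t\le C$, which will be used below.

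\textbf{Step 2: control of $u_\gamma$ and the dissipation terms.} Here I would reuse verbatim the computation leading to \eqref{eq:ge7}--\eqref{eq:ge8} in the proof of \Cref{prop:ue0}: differentiating $\|u_\gamma\|_2^2+\theta\|v_\gamma\|_2^2$ and $\gamma^{-1}\|v_\gamma\|_3^3$, one obtains
\begin{equation*}
	\frac{\mathrm{d}Y_\gamma}{\mathrm{d}t} + \frac{3}{2}\|\nabla u_\gamma\|_2^2 + 2\gamma\|e_\gamma\|_2^2 + 2\theta\|v_\gamma\|_3^3 \le C\|u_\gamma\|_3^3 + C\|u_\gamma\|_2^2\big(1+\|\partial_t w_\gamma\|_2^2\big)
\end{equation*}
with $Y_\gamma := 1+\|u_\gamma\|_2^2+\theta\|v_\gamma\|_2^2+\gamma^{-1}\|v_\gamma\|_3^3$, where I have absorbed the $\int u_\gamma^2 v_\gamma$ and $\int u_\gamma v_\gamma^2$ terms by Young's inequality into $\theta\|v_\gamma\|_3^3+C\|u_\gamma\|_3^3$ and treated $\tfrac1D\|u_\gamma\|_4^2\|\partial_t w_\gamma\|_2$ via \eqref{eq:GN} and Young exactly as there. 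The only difference is that, lacking the logarithmic entropy bound \eqref{eq:ue01}, I would not use the sharp inequality of \cite{BHN1994}; instead, interpolate $\|u_\gamma\|_3^3$ between $\|u_\gamma\|_2^2$ and $\|\nabla u_\gamma\|_2^2$. In two dimensions, Gagliardo--Nirenberg gives $\|u_\gamma\|_3^3\le C\|u_\gamma\|_{W_2^1}^2\|u_\gamma\|_1$, and with $\|u_\gamma\|_1\le M$ fixed one still has to worry about the constant $M$ not being small. This is the point where I must instead use $\partial_t w_\gamma$-control: rewrite $\Delta w_\gamma$ in terms of $v_\gamma$, $w_\gamma$, $\partial_t w_\gamma$ when estimating $-\int u_\gamma^2\Delta w_\gamma$, so that the chemotactic term is $\tfrac1D\int u_\gamma^2(v_\gamma-\alpha w_\gamma-\partial_t w_\gamma)$; the first summand is $\le\|u_\gamma\|_3^2\|v_\gamma\|_3\le\varepsilon\|u_\gamma\|_3^3+C_\varepsilon\|v_\gamma\|_3^3$ and does NOT need $v_\gamma$ bounded in $L_3$—but $\gamma^{-1}\|v_\gamma\|_3^3$ in $Y_\gamma$ together with the $2\theta\|v_\gamma\|_3^3$ dissipation handles exactly this, so the scheme closes as in \Cref{prop:ue0}. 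The term $\|u_\gamma\|_3^3$ left over is then absorbed into $\tfrac{\varepsilon}{?}\|\nabla u_\gamma\|_2^2+C\|u_\gamma\|_2^2$ using the 2D Gagliardo--Nirenberg inequality $\|u_\gamma\|_3^3\le C\|\nabla u_\gamma\|_2^{?}\|u_\gamma\|_2^{?}+C\|u_\gamma\|_2^3$ and Young; crucially there is no smallness constraint because the exponent on $\|\nabla u_\gamma\|_2$ in that interpolation is $<2$ when measured against $L_2$ rather than $L_1$. Thus
\begin{equation*}
	\frac{\mathrm{d}Y_\gamma}{\mathrm{d}t} + \|\nabla u_\gamma\|_2^2 + 2\gamma\|e_\gamma\|_2^2 + \theta\|v_\gamma\|_3^3 \le C(T)\big(1+\|\partial_t w_\gamma\|_2^2\big)Y_\gamma .
\end{equation*}

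\textbf{Step 3: conclusion.} From Step 1 and the $L_2$-parabolic estimate $\int_0^T\|\partial_t w_\gamma\|_2^2\,\mathrm{d}t\le C(T)$ (which follows once $v_\gamma$ is bounded in $L_\infty((0,T),L_2)$, again by testing \eqref{a3PL} suitably), Gronwall applied to the last display gives $Y_\gamma(t)\le Y_\gamma(0)\exp\{C(T)(T+\int_0^T\|\partial_t w_\gamma\|_2^2)\}\le C(T)$ for $t\in[0,T]$, hence the $u_\gamma$- and $v_\gamma$-parts of \eqref{ubd1}. Integrating the differential inequality once more and using $Y_\gamma\ge 0$ yields $\int_0^T[\|\nabla u_\gamma\|_2^2+\gamma\|e_\gamma\|_2^2]\,\mathrm{d}t\le C(T)$, and $\int_0^T\|\partial_t w_\gamma\|_2^2\,\mathrm{d}t\le C(T)$ is already in hand, giving \eqref{ubd2}.

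\textbf{Main obstacle.} The delicate point is Step 2: closing the $Y_\gamma$-inequality without the entropy bound $\|L(u_\gamma)\|_1\le C$ that was available in \Cref{prop:ue0}. The resolution is that in two dimensions the term $\|u_\gamma\|_3^3$ interpolates against $\|\nabla u_\gamma\|_2^2$ with a sub-critical power when the lower norm is $L_2$ (not $L_1$), so no mass-smallness is needed; one only needs $\|u_\gamma\|_2$ to appear on the right, which is exactly what $Y_\gamma$ provides, and the chemotactic contribution involving $\partial_t w_\gamma$ is handled by $L_4$--$L_2$ Gagliardo--Nirenberg plus Young, identically to the proof of \Cref{prop:ue0}. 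Everything else is bookkeeping with Hölder, Young, \eqref{eq:GN}, and Gronwall.
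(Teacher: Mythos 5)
Your Steps~1 and~3 coincide with the paper's argument (testing~\eqref{a3PL} with $\partial_t w_\gamma$ to get the $W_2^1$-bound on $w_\gamma$ and $\int_0^T\|\partial_t w_\gamma\|_2^2\,\mathrm{d}t\le C$, then Gronwall plus a second integration of the differential inequality). The gap is in Step~2. By importing the scheme of \Cref{prop:ue0} — pointwise substitution of $\Delta w_\gamma=(\partial_t w_\gamma+\alpha w_\gamma-v_\gamma)/D$ inside $\int u_\gamma^2\Delta w_\gamma$, the auxiliary term $\gamma^{-1}\|v_\gamma\|_3^3$, and the Young splitting $\tfrac1D\int u_\gamma^2v_\gamma+3\int u_\gamma v_\gamma^2\le\theta\|v_\gamma\|_3^3+C\|u_\gamma\|_3^3$ — you are left with $C\|u_\gamma\|_3^3$ with a \emph{fixed} constant $C$, and without the entropy bound the only available interpolation is $\|u_\gamma\|_3^3\le c\,\|u_\gamma\|_{W_2^1}\|u_\gamma\|_2^2$ (the 2D Gagliardo--Nirenberg exponents are $a=1$ on $\|u\|_{W_2^1}$ and $2$ on $\|u\|_2$). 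Absorbing the gradient factor by Young, $C\|u_\gamma\|_3^3\le \tfrac14\|u_\gamma\|_{W_2^1}^2+C'\|u_\gamma\|_2^4$, necessarily produces $\|u_\gamma\|_2^4\sim Y_\gamma^2$ on the right-hand side. Your final display $\frac{\mathrm{d}Y_\gamma}{\mathrm{d}t}+\dots\le C(T)(1+\|\partial_t w_\gamma\|_2^2)Y_\gamma$ is therefore not what the computation delivers; the true outcome is a Riccati-type inequality $Y_\gamma'\le C(1+\|\partial_t w_\gamma\|_2^2)(Y_\gamma+Y_\gamma^2)$, which only yields a bound on a small time interval determined by $Y_\gamma(0)$, not on the prescribed $(0,T)$. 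Since the lemma must hold for an arbitrary given $T$ (in \Cref{thm:ub} it is applied with $T>T_{bu}$), this is fatal as written.

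The repair is to use the hypothesis $\|v_\gamma\|_2\le K$ where it actually helps, which also makes the $L_3$-machinery superfluous. Work only with $\|u_\gamma\|_2^2+\theta\|v_\gamma\|_2^2$ (whose derivative already produces $-2\gamma\|e_\gamma\|_2^2$ with no cross terms), bound the chemotactic term by H\"older as $\big|\tfrac12\int_\Omega u_\gamma^2\Delta w_\gamma\,\mathrm{d}x\big|\le\tfrac12\|u_\gamma\|_4^2\|\Delta w_\gamma\|_2$, apply~\eqref{eq:GN} and Young to get $\tfrac12\|u_\gamma\|_{W_2^1}^2+C\|u_\gamma\|_2^2\|\Delta w_\gamma\|_2^2$, and only then substitute the equation \emph{inside the norm}: $\|\Delta w_\gamma\|_2^2\le C(\|\partial_t w_\gamma\|_2^2+\|w_\gamma\|_2^2+\|v_\gamma\|_2^2)\le C(\|\partial_t w_\gamma\|_2^2+\|w_\gamma\|_2^2+K^2)$. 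This gives a differential inequality that is \emph{linear} in $\|u_\gamma\|_2^2+\theta\|v_\gamma\|_2^2$ with a time-integrable coefficient $1+\|\partial_t w_\gamma\|_2^2+\|w_\gamma\|_2^2$, so Gronwall closes on all of $[0,T]$; integrating once more yields~\eqref{ubd2}. This is precisely the paper's proof.
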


\begin{proof}
	In the following, $C$ denote positive constants depending only on $\Omega$, $\theta$, $D$, $\alpha$, $u_0$, $v_0$, $w_0$, $T$, and $K$.
	
	It readily follows from~\eqref{a3PL}, \eqref{bPL}, \eqref{eq:e20}, and Young's inequality that
	\begin{align*}
		\|\partial_t w_\gamma\|_2^2 & + \frac{1}{2} \frac{\mathrm{d}}{\mathrm{d}t} \big[ D \|\nabla w_\gamma\|_2^2 + \alpha \|w_\gamma\|_2^2 \big] = \int_\Omega v_\gamma \partial_t w_\gamma\ \mathrm{d}x \\ 
		& \le \frac{1}{2} \|v_\gamma\|_2^2 + \frac{1}{2} \|\partial_t w_\gamma\|_2^2 \le \frac{K^2}{2} + \frac{1}{2} \|\partial_t w_\gamma\|_2^2.
	\end{align*}
	Hence,
	\begin{equation*}
			\|\partial_t w_\gamma\|_2^2 + \frac{\mathrm{d}}{\mathrm{d}t} \big[ D \|\nabla w_\gamma\|_2^2 + \alpha \|w_\gamma\|_2^2 \big] \le K^2,
	\end{equation*}
	from which we deduce, after integration with respect to time,
	\begin{equation}
		\sup_{t\in [0,T]} \|w_\gamma(t)\|_{W_2^1}^2 + \int_0^T \|\partial_t w_\gamma(t)\|_2^2\ \mathrm{d}t \le C. \label{eq:c21}
	\end{equation}
	We next infer from~\eqref{a1PL}, \eqref{a2PL}, \eqref{bPL}, the Gagliardo-Nirenberg inequality~\eqref{eq:GN}, and H\"older's inequality that
	\begin{align*}
		\frac{1}{2} \frac{\mathrm{d}}{\mathrm{d}t} \big[ \|u_\gamma\|_2^2 + \theta \|v_\gamma\|_2^2 \big] & = - \int_\Omega \nabla u_\gamma \cdot \big( \nabla u_\gamma - u_\gamma \nabla w_\gamma)\ \mathrm{d}x - \gamma \|e_\gamma\|_2^2 \\
		& = -\|\nabla u_\gamma\|_2^2 - \frac{1}{2} \int_\Omega u_\gamma^2 \Delta w_\gamma\ \mathrm{d}x - \gamma \|e_\gamma\|_2^2 \\
		& \le - \|u_\gamma\|_{W_2^1}^2 + \|u_\gamma\|_2^2 + \frac{1}{2} \| u_\gamma\|_4^2 \|\Delta w_\gamma\|_2 - \gamma \|e_\gamma\|_2^2 \\
		& \le - \|u_\gamma\|_{W_2^1}^2 + \|u_\gamma\|_2^2 + \frac{c_0^2}{2} \|u_\gamma\|_{W_2^1} \| u_\gamma\|_2 \|\Delta w_\gamma\|_2 - \gamma \|e_\gamma\|_2^2 .
	\end{align*}
	Using now~\eqref{a3PL}, Young's inequality, and \eqref{eq:e20}, we further obtain
	\begin{align}
		\frac{1}{2} \frac{\mathrm{d}}{\mathrm{d}t} \big[ \|u_\gamma\|_2^2 + \theta \|v_\gamma\|_2^2 \big] & \le - \|u_\gamma\|_{W_2^1}^2 + \|u_\gamma\|_2^2 + \frac{1}{2} \|u_\gamma\|_{W_2^1}^2 \nonumber\\
		& \qquad + \frac{c_0^4}{8 D^2} \|u_\gamma\|_2^2 \|\partial_t w_\gamma + \alpha w_\gamma - v_\gamma\|_2^2 - \gamma \|e_\gamma\|_2^2 \nonumber \\
		& \le - \frac{1}{2} \|u_\gamma\|_{W_2^1}^2 + C \|u_\gamma\|_2^2\big( 1 + \|\partial_t w_\gamma\|_2^2 + \| w_\gamma \|_2^2 + \| v_\gamma\|_2^2 \big) - \gamma \|e_\gamma\|_2^2 \nonumber \\
		& \le - \frac{1}{2} \|u_\gamma\|_{W_2^1}^2 - \gamma \|e_\gamma\|_2^2 + C \|u_\gamma\|_2^2\big( 1 + \|\partial_t w_\gamma\|_2^2 + \| w_\gamma \|_2^2 \big). \label{eq:c21a}
	\end{align}
	As this yields
	\begin{equation*}
	  \frac{\mathrm{d}}{\mathrm{d}t} \big[ \|u_\gamma\|_2^2 + \theta \|v_\gamma\|_2^2 \big] 
		\le C \big(\|u_\gamma\|_2^2+ \theta \|v_\gamma\|_2^2 \big) \big( 1 + \|\partial_t w_\gamma\|_2^2 + \| w_\gamma \|_2^2 \big),
	\end{equation*}
	Gronwall's lemma in conjunction with \eqref{eq:c21} implies that, for all $t\in [0,T]$,
	\begin{equation}
	  \|u_\gamma (t)\|_2^2 + \theta \|v_\gamma (t)\|_2^2 \le C \exp \left\{ C \int_0^T \big(1 + \|\partial_t w_\gamma (s)\|_2^2 + \| w_\gamma (s)\|_2^2 \big) \ \mathrm{d}s \right\} \le C. \label{eq:c21b}
	\end{equation}
	Integrating \eqref{eq:c21a} with respect to time, we deduce from~\eqref{eq:c21} and~\eqref{eq:c21b} that 
	\begin{equation}
		\int_0^T \big[ \|u_\gamma(t)\|_{W_2^1}^2 + \gamma \|e_\gamma(t)\|_2^2 \big]\ \mathrm{d}t \le C. \label{eq:c22}
	\end{equation}
	Collecting~\eqref{eq:e20}, \eqref{eq:c21}, \eqref{eq:c21b}, and~\eqref{eq:c22} shows that $(u_\gamma,v_\gamma,w_\gamma)$ satisfies~\eqref{ubd} on $(0,T)$ for all $\gamma\ge 1$, as claimed.
\end{proof}

We further show that a corresponding bound on $u_\gamma$ implies the bound \eqref{eq:e20} on $v_\gamma$. Here we adapt the idea from \cite[Lemma~4.11]{PaWi2023}.
\begin{lemma}\label{lem:ub2}
	Consider $(u_0,v_0,w_0) \in W_{3,+}^1(\Omega;\mathbb{R}^3)$ and, for $\gamma>0$, denote the solution to \eqref{aPL}--\eqref{iPL} given by \Cref{prop:wp} by $(u_\gamma,v_\gamma,w_\gamma)$. Assume that there are $T\in (0,\infty)$ and $\widetilde{K}>0$ such that
	\begin{equation}
	  \|u_\gamma(t)\|_2 \le \widetilde{K} \qquad \mbox{for all } t\in (0,T), \ \gamma\ge 1. \label{eq:e23}
	\end{equation} 
	Then there is $K >0$ such that \eqref{eq:e20} is fulfilled.
\end{lemma}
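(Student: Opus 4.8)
The plan is to exploit that equation~\eqref{a2PL} is, for each fixed $x\in\Omega$, a linear ordinary differential equation in time whose homogeneous part is dissipative with rate $\gamma\theta$ and whose forcing term is $\gamma u_\gamma$. Since~\eqref{eq:e23} provides a bound on $u_\gamma$ in $L_2(\Omega)$ that is uniform with respect to $\gamma\ge 1$, and since the decay rate $\gamma\theta$ of the homogeneous equation is precisely of the same order in $\gamma$ as the forcing, a Gronwall-type argument should yield a $\gamma$-independent bound on $\|v_\gamma(t)\|_2$.

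Concretely, I would first multiply~\eqref{a2PL} by $v_\gamma$ and integrate over $\Omega$, which is legitimate thanks to the regularity $v_\gamma\in C^1([0,\infty),W_{3,+}^1(\Omega))$ granted by \Cref{prop:wp}, to obtain
\[
\frac{1}{2} \frac{\mathrm{d}}{\mathrm{d}t} \|v_\gamma\|_2^2 = \gamma\int_\Omega u_\gamma v_\gamma\ \mathrm{d}x - \gamma\theta \|v_\gamma\|_2^2, \qquad t\in (0,T).
\]
The cross term is then handled by the Cauchy-Schwarz and Young inequalities together with~\eqref{eq:e23}, namely
\[
\gamma\int_\Omega u_\gamma v_\gamma\ \mathrm{d}x \le \gamma\|u_\gamma\|_2\|v_\gamma\|_2 \le \frac{\gamma}{2\theta}\|u_\gamma\|_2^2 + \frac{\gamma\theta}{2}\|v_\gamma\|_2^2 \le \frac{\gamma \widetilde{K}^2}{2\theta} + \frac{\gamma\theta}{2}\|v_\gamma\|_2^2,
\]
so that
\[
\frac{\mathrm{d}}{\mathrm{d}t} \|v_\gamma\|_2^2 + \gamma\theta \|v_\gamma\|_2^2 \le \frac{\gamma \widetilde{K}^2}{\theta}, \qquad t\in (0,T).
\]

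Integrating this differential inequality (equivalently, applying Gronwall's lemma) and using $\gamma\theta>0$ gives, for all $t\in (0,T)$ and all $\gamma\ge 1$,
\[
\|v_\gamma(t)\|_2^2 \le e^{-\gamma\theta t}\|v_0\|_2^2 + \frac{\widetilde{K}^2}{\theta^2}\big(1-e^{-\gamma\theta t}\big) \le \|v_0\|_2^2 + \frac{\widetilde{K}^2}{\theta^2},
\]
which is exactly~\eqref{eq:e20} with the $\gamma$-independent constant $K := \big(\|v_0\|_2^2 + \widetilde{K}^2/\theta^2\big)^{1/2}$. I do not expect a genuine obstacle here; the only point worth emphasising is that the factor $\gamma$ in front of the forcing in~\eqref{a2PL} is exactly compensated by the $\gamma$-dependence of the dissipation rate, which is the structural reason why $v_\gamma$ remains controlled in the fast-reaction limit as soon as $u_\gamma$ is bounded. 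An equivalent alternative would be to represent the solution of~\eqref{a2PL} through the variation-of-constants formula $v_\gamma(t) = e^{-\gamma\theta t}v_0 + \gamma\int_0^t e^{-\gamma\theta(t-s)} u_\gamma(s)\ \mathrm{d}s$ and to estimate its $L_2$-norm directly by Minkowski's integral inequality, relying on $\gamma\int_0^t e^{-\gamma\theta(t-s)}\ \mathrm{d}s \le 1/\theta$.
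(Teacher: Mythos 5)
Your proposal is correct and follows essentially the same route as the paper: both test~\eqref{a2PL} with $v_\gamma$ and exploit that the $\gamma$ in front of the forcing is compensated by the dissipation rate $\gamma\theta$. The only (immaterial) difference is the final elementary step, where the paper skips Young's inequality and applies a comparison principle directly to $\frac{\mathrm{d}}{\mathrm{d}t}\|v_\gamma\|_2^2 \le 2\gamma\big(\widetilde{K}-\theta\|v_\gamma\|_2\big)\|v_\gamma\|_2$, yielding $K=\max\{\widetilde{K}/\theta,\|v_0\|_2\}$.
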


\begin{proof}
  Using \eqref{a2PL}, H\"{o}lder's inequality, and \eqref{eq:e23}, we have for $t \in (0,T)$
	\begin{equation*}
	  \frac{\mathrm{d}}{\mathrm{d}t} \|v_\gamma\|_2^2  = 2\gamma \left(\int_\Omega u_\gamma v_\gamma\ \mathrm{d}x 
		- \theta \|v_\gamma\|_2^2 \right) 
		\le 2\gamma \left( \|u_\gamma\|_2 \|v_\gamma\|_2 
		- \theta \|v_\gamma\|_2^2 \right) 
		\le 2\gamma \left(\widetilde{K} - \theta \|v_\gamma\|_2 \right) \|v_\gamma\|_2.
	\end{equation*}
	In view of $v_\gamma (0) = v_0$, this implies \eqref{eq:e20} with $K := \max \left\{ \frac{\widetilde{K}}{\theta}, \|v_0\|_2 \right\}$ by the comparison principle.
\end{proof}

\begin{proof}[Proof of \Cref{thm:ub}]
	It first follows from~\eqref{eq:i1} and \cite[Theorem~3.6]{HoWa2001} that
	\begin{equation}
		\lim_{t\to T_{bu}} \|n(t)\|_2=\infty. \label{eq:e1}
	\end{equation}
	Next, let $T>T_{bu}$ and assume for contradiction that there is $K>0$ such that
	\begin{equation}
		\|v_\gamma(t)\|_2 \le K \qquad \mbox{for all } t\in (0,T), \ \gamma\ge 1. \label{eq:e2}
	\end{equation}
	Then \eqref{ubd} is fulfilled by \Cref{lem:ub1}. Hence, by \Cref{prop:bd} and the proof of \Cref{thm:sl}, $(n_\gamma, 
	w_\gamma)$ converges as $\gamma \to \infty$ to a weak-strong solution $(\tilde{n},\tilde{w})$ to \eqref{aKS}--\eqref{iKS} on $(0,T)$. 
	As the classical solution $(n,w)$ to \eqref{aKS}--\eqref{iKS} is a weak-strong solution and the latter is unique by \Cref{prop:c2},
	we have $(n,w) = (\tilde{n}, \tilde{w})$. Consequently, $n \in L_\infty ((0,T), L_2(\Omega))$ which contradicts \eqref{eq:e1}
	in view of $T > T_{bu}$. Hence, \eqref{eq:e2} cannot be valid. In view of \Cref{lem:ub2}, this further implies that
	\begin{equation*}
	   \|u_\gamma(t)\|_2 \le \widetilde{K} \qquad \mbox{for all } t\in (0,T), \ \gamma\ge 1
	\end{equation*}
	cannot be satisfied for some $\widetilde{K}>0$, and the proof of \Cref{thm:ub} is complete. 	
\end{proof}

\appendix
\section{The Moser-Trudinger inequality}\label{sec:mti}

We recall here the version of the Moser-Trudinger inequality derived in \cite[Corollary~2.7]{GaZa1998} and in \cite[Section~2]{NSY1997} from \cite[Proposition~2.3]{ChYa1988} and which is used in the proof of \Cref{lem:ge2}.

\begin{proposition}\label{prop:ap}
	There is $K_0 >0$ depending only on $\Omega$ such that, for all $z\in W_2^1(\Omega)$, 
	\begin{equation*}
		\int_\Omega e^{|z|} \ \mathrm{d}x \le K_0 \, \exp \left(\frac{\| \nabla z\|_{2}^2}{8\pi} + \frac{\| z\|_{1}}{|\Omega|} \right).
\end{equation*}	
\end{proposition}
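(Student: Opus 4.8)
This proposition is recalled from \cite[Corollary~2.7]{GaZa1998} and \cite[Section~2]{NSY1997}, and the natural route to it is to reduce it to the \emph{critical} Moser--Trudinger inequality for zero-mean functions: there is $K_1>0$, depending only on $\Omega$, such that
\begin{equation*}
	\int_\Omega e^{2\pi \varphi^2}\ \mathrm{d}x \le K_1 \qquad\text{whenever } \varphi\in W_2^1(\Omega),\quad \int_\Omega \varphi\ \mathrm{d}x = 0,\quad \|\nabla\varphi\|_2 \le 1 .
\end{equation*}
For a bounded domain $\Omega\subset\mathbb{R}^2$ with smooth boundary this is a consequence of \cite[Proposition~2.3]{ChYa1988}, and I would take it as the one substantial analytic input; granting it, the remaining argument (essentially that of \cite{GaZa1998,NSY1997}) is elementary.

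First I would split off the mean: writing $m := |\Omega|^{-1}\int_\Omega z\ \mathrm{d}x$ and $\zeta := z - m$, one has $\int_\Omega \zeta\ \mathrm{d}x = 0$ and $\nabla\zeta = \nabla z$. Since $|z| \le |m| + |\zeta|$ pointwise and $|m| \le |\Omega|^{-1}\|z\|_1$,
\begin{equation*}
	\int_\Omega e^{|z|}\ \mathrm{d}x \le e^{\|z\|_1/|\Omega|}\int_\Omega e^{|\zeta|}\ \mathrm{d}x ,
\end{equation*}
so it remains to bound the last integral by $K_1\, e^{\|\nabla z\|_2^2/(8\pi)}$. If $\|\nabla z\|_2 = 0$ then $\zeta\equiv 0$ and there is nothing to do; otherwise I set $\varphi := \zeta/\|\nabla z\|_2$, which satisfies $\int_\Omega\varphi\ \mathrm{d}x = 0$ and $\|\nabla\varphi\|_2 = 1$. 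Young's inequality in the form $ab \le A a^2 + B b^2$ (valid for $a,b\ge 0$ as soon as $4AB \ge 1$), applied with $a = \|\nabla z\|_2$, $b = |\zeta|/\|\nabla z\|_2$, $A = 1/(8\pi)$, and $B = 2\pi$, gives the pointwise bound
\begin{equation*}
	|\zeta| \le \frac{\|\nabla z\|_2^2}{8\pi} + 2\pi\varphi^2 \qquad\text{in } \Omega .
\end{equation*}
Exponentiating, integrating over $\Omega$, and then invoking the critical Moser--Trudinger inequality for $\varphi$ yield
\begin{equation*}
	\int_\Omega e^{|\zeta|}\ \mathrm{d}x \le e^{\|\nabla z\|_2^2/(8\pi)}\int_\Omega e^{2\pi\varphi^2}\ \mathrm{d}x \le K_1\, e^{\|\nabla z\|_2^2/(8\pi)} ,
\end{equation*}
and combining the two displays gives the assertion with $K_0 := K_1$.

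The one genuinely nontrivial step is the critical --- rather than merely subcritical --- Moser--Trudinger inequality with the sharp constant $2\pi$ for zero-mean functions on a smooth bounded planar domain; it is precisely this sharp constant that produces the coefficient $1/(8\pi)$ in the exponent, and establishing it requires the full Moser--Trudinger machinery (rearrangement estimates, or the argument on $S^2$ used in \cite{ChYa1988}), including the control of concentration at the boundary which explains why the Neumann constant is $2\pi$ rather than the Dirichlet value $4\pi$. Everything else here --- the mean decomposition and the elementary splitting via Young's inequality --- is routine. I would only add that running the same scheme with the much easier \emph{subcritical} Moser--Trudinger inequality gives an inequality of the same shape but with a strictly larger coefficient in front of $\|\nabla z\|_2^2$, which would not be enough for the use made of \Cref{prop:ap} in \Cref{lem:ge2}, where the critical mass threshold $4\pi(1+\theta)D$ is tied to the precise value $1/(8\pi)$.
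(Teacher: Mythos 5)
Your derivation is correct and is exactly the argument behind the cited sources: the paper itself gives no proof of \Cref{prop:ap}, merely recalling it from \cite[Corollary~2.7]{GaZa1998} and \cite[Section~2]{NSY1997}, and those references obtain it precisely as you do, by subtracting the mean and combining Young's inequality $ab\le a^2/(8\pi)+2\pi b^2$ with the critical constant-$2\pi$ Moser--Trudinger inequality of \cite{ChYa1988} for mean-zero functions in $W_2^1(\Omega)$. Your remark that the sharp constant $2\pi$ (rather than a subcritical one) is what produces the coefficient $1/(8\pi)$, and hence the threshold $4\pi(1+\theta)D$ used in \Cref{lem:ge2}, is also accurate.
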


\section{Rescaling~\eqref{aKS}}\label{sec:resc}

Let $(n,w)$ be a solution to~\eqref{aKS} with initial condition $(n_0,w_0)$. Performing a simple scaling transforms~\eqref{aKS} to a version of the parabolic-parabolic chemotaxis Keller-Segel system which is usually used in the literature. Indeed, introducing 
\begin{equation}
	U(s,x) := \frac{1}{\theta} n\left( \frac{1+\theta}{\theta} s,x \right), \quad V(s,x) := w\left( \frac{1+\theta}{\theta} s,x \right), \qquad (s,x)\in (0,\infty)\times\Omega, \label{rescal}
\end{equation}
we readily deduce from~\eqref{aKS} that $(U,V)$ solves the Keller-Segel system
\begin{subequations}\label{rKS}
	\begin{align}
		\partial_s U & = \mathrm{div}(\nabla U - U \nabla V) &\text{ in }\;\; (0,\infty) \times \Omega, \label{r1KS}\\
		\frac{\theta}{(1+\theta)D} \partial_s V & = \Delta w - \frac{\alpha}{D} V + \frac{\theta}{(1+\theta)D} U & \text{ in }\;\; (0,\infty) \times \Omega, \label{r2KS} \\
		& \big( \nabla U - U \nabla V \big) \cdot \mathbf{n} = \nabla V \cdot \mathbf{n} =0 & \text{ on }\;\; (0,\infty) \times \partial \Omega, \label{r3KS} \\
		(U,V)(0) & = \left( \frac{n_0}{\theta},w_0 \right) &\text{ in }\;\; \Omega. \label{r4KS}
	\end{align}
\end{subequations}
According to \cite{GaZa1998, NSY1997}, global existence holds true for~\eqref{rKS} if 
\begin{equation*}
	\|n_0\|_1 = \theta \frac{\|n_0\|_1}{\theta} < \theta \frac{4\pi (1+\theta) D}{\theta} = 4\pi (1+\theta) D,
\end{equation*}
while finite time blowup may occur when $\|n_0\|_1$ exceeds the above threshold \cite{Ho2001, Ho2002, HoWa2001}.

\section{Uniqueness of weak-strong solutions to~\eqref{aKS}--\eqref{iKS}}\label{sec:uws}

We provide here a proof of the uniqueness result of weak-strong solutions to~\eqref{aKS}--\eqref{iKS} which is used in the proof of \Cref{thm:sl} to obtain the convergence of the whole family $(n_\gamma)_{\gamma\ge 1}$. We first make precise the meaning of weak-strong solutions to~\eqref{aKS}--\eqref{iKS}.

\begin{definition}\label{def:c1}
	Let $T\in (0,\infty)$ and $(n_0,w_0)\in L_{2,+}(\Omega)\times W_{2,+}^1(\Omega)$. A weak-strong solution to~\eqref{aKS}--\eqref{iKS} on $[0,T]$ is a pair of functions
	\begin{subequations}\label{eq:ws}
	\begin{align}
		& n\in W_{2}^1((0,T),W_2^1(\Omega)') \cap L_{\infty}((0,T),L_{2,+}(\Omega))\cap L_{2}((0,T),W_2^1(\Omega)), \quad n(0)=n_0, \label{eq:ws1} \\
		& w\in W_{2}^1((0,T),L_2(\Omega))\cap L_{\infty}((0,T),W_{2,+}^1(\Omega)) \cap L_{2}((0,T),W_2^2(\Omega)), \label{eq:ws2}
	\end{align}
	such that
	\begin{equation}
		\int_0^t \langle \partial_t n(s) , \varphi \rangle_{(W_2^1)',W_2^1} \mathrm{d}s + \frac{\theta}{1+\theta} \int_0^t 	\int_\Omega \nabla\varphi \cdot \big( \nabla n - n \nabla w \big)(s)\ \mathrm{d}x\mathrm{d}s = 0 \label{eq:ws3}
	\end{equation}
	for all $t\in (0,T)$ and $\varphi\in W_2^1(\Omega)$, while $w$ is a strong solution to 
	\begin{equation}
	\begin{array}{ll}
		\partial_t w - D \Delta w + \alpha w = \displaystyle{\frac{n}{1+\theta}} & \qquad\text{ in }\;\;  (0,T) \times \Omega,\\[2mm]
		\nabla w \cdot \mathbf{n} = 0 & \qquad\text{ on }\;\; (0,T) \times \partial \Omega, \\[2mm]
		w(0) = w_0 & \qquad\text{ in }\;\; \Omega.
	\end{array}\label{eq:ws4}
	\end{equation}
	\end{subequations}
\end{definition}

Clearly, classical solutions to~\eqref{aKS} are weak-strong solutions to~\eqref{aKS}, see \cite[Theorem~3.1]{HoWi2005} for instance for the existence of the former. Also, the weak solutions to~\eqref{aKS} constructed in \cite[Theorem~1]{Bil1998} for $(n_0,w_0)\in L_{q,+}(\Omega)\times W_{q,+}^1(\Omega)$, $q>2$, in \cite[Theorem~3.3]{GaZa1998} for $(n_0,w_0)\in L_{\infty,+}(\Omega)\times W_{q,+}^1(\Omega)$, $q>2$, in \cite{Yag1997} for $(n_0,w_0)\in W_{2,+}^s(\Omega;\mathbb{R}^2)$, $s>1$, and in \cite[Chapter~12]{Yag2010} for $(n_0,w_0)\in L_{2,+}(\Omega)\times W_{2,\mathcal{B},+}^2(\Omega)$ are weak-strong solutions to~\eqref{aKS}. Uniqueness results are also provided in \cite{GaZa1998, HoWi2005, Yag1997, Yag2010} but require more regularity than that stated in \Cref{def:c1}. Let us also mention that well-posedness in $L_{p,+}(\Omega)\times W_{q,+}^1(\Omega)$ with $p>1$ and $q>2$ is obtained in \cite{Bil1998}. We shall show now that the assumptions in \Cref{def:c1} are sufficient to guarantee uniqueness. To this end, we adapt the uniqueness argument used in the proof of \cite[Theorem~3.1]{HoWi2005} and rely heavily on the two-dimensional setting and the Gagliardo-Nirenberg inequality~\eqref{eq:GN}.

\begin{proposition}\label{prop:c2}
	Let $T\in (0,\infty)$ and $(n_0,w_0)\in L_{2,+}(\Omega)\times W_{2,+}^1(\Omega)$, where $\Omega \subset \mathbb{R}^2$ is a bounded domain with smooth boundary. There is at most one weak-strong solution to~\eqref{aKS}--\eqref{iKS} on $[0,T]$.
\end{proposition}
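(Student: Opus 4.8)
The plan is to argue by contradiction: suppose $(n_1,w_1)$ and $(n_2,w_2)$ are two weak-strong solutions to~\eqref{aKS}--\eqref{iKS} on $[0,T]$ with the same initial data, set $N:=n_1-n_2$ and $W:=w_1-w_2$, and show $N\equiv W\equiv 0$ on $[0,T]$ by a Gronwall argument for the quantity $t\mapsto \|N(t)\|_{(W_2^1)'}^2 + \|W(t)\|_2^2$ (or a suitable weighted combination). The choice of the negative Sobolev norm for $N$ is dictated by the regularity available in \Cref{def:c1}: $n$ lies only in $L_2((0,T),W_2^1(\Omega))$ with $\partial_t n\in L_2((0,T),W_2^1(\Omega)')$, so testing the equation for $N$ against $N$ itself is not licit, whereas testing against $\mathcal{A}^{-1}N$, where $\mathcal{A}:=I-\Delta$ with homogeneous Neumann boundary conditions, is. I would introduce $\Phi:=\mathcal{A}^{-1}N\in W_2^2(\Omega)$, so that $\|N\|_{(W_2^1)'}^2 = \langle N,\Phi\rangle = \|\nabla\Phi\|_2^2+\|\Phi\|_2^2$, and note $\langle\partial_t N,\Phi\rangle = \tfrac12\frac{\mathrm{d}}{\mathrm{d}t}\|N\|_{(W_2^1)'}^2$.

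Next I would write out the equation for $N$ in weak form, subtracting~\eqref{eq:ws3} for the two solutions, and take $\varphi=\Phi$. The diffusion term $\int\nabla\Phi\cdot\nabla N\,\mathrm{d}x$ is handled via $\int\nabla\Phi\cdot\nabla N\,\mathrm{d}x = \int N(-\Delta\Phi)\,\mathrm{d}x = \int N(N-\Phi)\,\mathrm{d}x = \|N\|_2^2 - \int N\Phi\,\mathrm{d}x$, which produces a good (coercive) $\|N\|_2^2$ term after accounting for signs. The chemotactic term splits as $n_1\nabla w_1 - n_2\nabla w_2 = N\nabla w_1 + n_2\nabla W$, and against $\nabla\Phi$ one gets $\int N\nabla w_1\cdot\nabla\Phi\,\mathrm{d}x + \int n_2\nabla W\cdot\nabla\Phi\,\mathrm{d}x$. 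These are estimated by H\"older and the Gagliardo--Nirenberg inequality~\eqref{eq:GN}: for the first term, $\big|\int N\nabla w_1\cdot\nabla\Phi\big| \le \|N\|_2\|\nabla w_1\|_4\|\nabla\Phi\|_4$, and since $\nabla\Phi\in W_2^1$ one has $\|\nabla\Phi\|_4\le c_0\|\nabla\Phi\|_{W_2^1}^{1/2}\|\nabla\Phi\|_2^{1/2}$, with $\|\nabla\Phi\|_{W_2^1}\le C\|N\|_2$ by elliptic regularity for $\mathcal{A}$; this yields a bound $\le \varepsilon\|N\|_2^2 + C\|\nabla w_1\|_4^2\|\nabla\Phi\|_2^2$ after Young, with the $\|N\|_2^2$ absorbed into the coercive term. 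For the second term, $\big|\int n_2\nabla W\cdot\nabla\Phi\big|\le \|n_2\|_4\|\nabla W\|_2\|\nabla\Phi\|_4\le \varepsilon\|N\|_2^2 + C\|n_2\|_4^2\|\nabla W\|_2^2 + \dots$, again using~\eqref{eq:GN} to control $\|\nabla\Phi\|_4$ and $\|\Delta W\|_2$ via the $w$-equation. In parallel, testing the strong equation~\eqref{eq:ws4} for $W$ against $W$ gives $\tfrac12\frac{\mathrm{d}}{\mathrm{d}t}\|W\|_2^2 + D\|\nabla W\|_2^2 + \alpha\|W\|_2^2 = \tfrac{1}{1+\theta}\int N W\,\mathrm{d}x \le \varepsilon\|N\|_2^2 + C\|W\|_2^2$; here I may also need $\|\Delta W\|_2^2$ controlled, obtained by testing~\eqref{eq:ws4} against $-\Delta W$, producing $\frac{\mathrm{d}}{\mathrm{d}t}\|\nabla W\|_2^2$ plus $D\|\Delta W\|_2^2$ against $\tfrac{1}{1+\theta}\int N(-\Delta W)\le \varepsilon\|N\|_2^2 + C\|\Delta W\|_2^{?}$ — I would include $\|\nabla W\|_2^2$ in the Gronwall functional to make this work.

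Adding the $N$-estimate and the $W$-estimates (suitably weighted so that all the $\varepsilon\|N\|_2^2$ contributions are absorbed by the coercive $\|N\|_2^2$ coming from the diffusion term in the $N$-equation), one arrives at a differential inequality of the form
\begin{equation*}
	\frac{\mathrm{d}}{\mathrm{d}t}\Big( \|N\|_{(W_2^1)'}^2 + \|W\|_{W_2^1}^2 \Big) \le g(t)\,\Big( \|N\|_{(W_2^1)'}^2 + \|W\|_{W_2^1}^2 \Big),
\end{equation*}
where $g(t) := C\big( 1 + \|\nabla w_1(t)\|_4^2 + \|n_2(t)\|_4^2 \big)$. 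The crucial point is that $g\in L_1(0,T)$: indeed $n_2\in L_2((0,T),W_2^1(\Omega))\hookrightarrow L_2((0,T),L_4(\Omega))$ by~\eqref{eq:GN} (so $\|n_2\|_4^2\in L_1$), and $\nabla w_1\in L_2((0,T),L_4(\Omega))$ by~\eqref{eq:GN} together with $w_1\in L_2((0,T),W_2^2(\Omega))$. Since $N(0)=0$ and $W(0)=0$, Gronwall's inequality forces $N\equiv 0$ and $W\equiv 0$ on $[0,T]$. The main obstacle I anticipate is the bookkeeping in the chemotactic estimates: one must be careful to route every dangerous term through~\eqref{eq:GN} in such a way that the resulting highest-order factor is exactly $\|N\|_2$ (absorbable) times an $L_1$-in-time coefficient built only from the two given solutions, and to verify that the elliptic regularity estimate $\|\Phi\|_{W_2^2}\le C\|N\|_2$ for the Neumann problem for $I-\Delta$ is available on the smooth bounded domain $\Omega$; these are the points where a sign error or a misallocated power would break the argument, but no genuinely new idea beyond the $HoWi2005$ strategy is needed.
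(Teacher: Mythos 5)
Your argument is correct in substance but follows a genuinely different route from the paper. You run the $N$-estimate in the dual norm $\|N\|_{(W_2^1)'}$, testing against $\Phi=(I-\Delta)^{-1}N$, whereas the paper tests the $N$-equation against $N$ itself and runs Gronwall for $\tfrac{1+\theta}{\theta}\|N\|_2^2+\|W\|_{W_2^1}^2$ with the integrable weight $\psi=\|n_2\|_{W_2^1}^2+\|w_1\|_{W_2^2}^2$. Your stated motivation for the detour is a misconception: under the regularity of \Cref{def:c1}, $N\in L_2((0,T),W_2^1(\Omega))$ with $\partial_t N\in L_2((0,T),W_2^1(\Omega)')$, so the Lions--Magenes chain rule $\tfrac{\mathrm{d}}{\mathrm{d}t}\|N\|_2^2=2\langle\partial_t N,N\rangle_{(W_2^1)',W_2^1}$ applies and testing against $N$ \emph{is} licit --- this is exactly what the paper does, and it avoids the elliptic solution operator and its $W_2^2$-regularity altogether. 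That said, your scheme does close: the diffusion term yields the coercive $\|N\|_2^2$ as you say; the bound $\|\nabla\Phi\|_4\le c_0\|\nabla\Phi\|_{W_2^1}^{1/2}\|\nabla\Phi\|_2^{1/2}$ with $\|\Phi\|_{W_2^2}\le C\|N\|_2$ gives, for the first chemotactic term, $\varepsilon\|N\|_2^2+C\|\nabla w_1\|_4^{4}\|\nabla\Phi\|_2^2$ (note the exponent is $4$, not $2$, from Young with exponents $4/3$ and $4$; harmless, since $\|\nabla w_1\|_4^4\in L_1(0,T)$ by~\eqref{eq:GN} and $w_1\in L_\infty((0,T),W_2^1)\cap L_2((0,T),W_2^2)$); and the second term is $\le\varepsilon\|N\|_2^2+C\|n_2\|_4^2\|\nabla W\|_2^2$ with $\|n_2\|_4^2\in L_1$. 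Two bookkeeping points you should make explicit: testing~\eqref{eq:ws4} against $-\Delta W$ (needed to put $\|\nabla W\|_2^2$ into the Gronwall functional) produces a term $C\|N\|_2^2$ with a \emph{fixed} constant, not an $\varepsilon$; this is absorbed only after putting a sufficiently large weight on the $\|N\|_{(W_2^1)'}^2$-component of the functional and then choosing $\varepsilon$, so the weighting is essential and not merely cosmetic. The trade-off between the two proofs: yours is more robust in lower-regularity settings where the $L_2$ pairing of $\partial_t N$ with $N$ would fail, while the paper's is shorter and needs no inverse operator; both rely on the same decomposition $n_1\nabla w_1-n_2\nabla w_2=N\nabla w_1+n_2\nabla W$, the two-dimensional Gagliardo--Nirenberg inequality, and Gronwall with an $L_1$-in-time coefficient.
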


\begin{proof}
	Let $T>0$, $(n_0,w_0)\in L_{2,+}(\Omega)\times W_{2,+}^1(\Omega)$, and consider two weak-strong solutions $(n_1,w_1)$ and $(n_2,w_2)$ to~\eqref{aKS}--\eqref{iKS} on $(0,T)$. We set $N:= n_1-n_2$, $W:=w_1-w_2$, $\psi := \|n_2\|_{W_2^1}^2 + \|w_1\|_{W_2^2}^2$ and infer from~\eqref{eq:ws1} and~\eqref{eq:ws2} that
	\begin{equation}
		\kappa := \sup_{t\in (0,T)}\left[ \|n_2(t)\|_2 + \|w_1(t)\|_{W_2^1} \right] < \infty \;\;\text{ and } \psi\in L_1((0,T)). \label{eq:c1}
	\end{equation}
	In the following, $C$ denote positive constants depending only on $\Omega$, $\theta$, $D$, $\alpha$, $n_0$, $w_0$, $T$, and $\kappa$.
	
	On the one hand, it follows from~\eqref{eq:ws1}, \eqref{eq:ws3}, and H\"older's inequality that
	\begin{align*}
		\frac{1}{2} \frac{\mathrm{d}}{\mathrm{d}t} \|N\|_2^2 & = \big\langle \partial_t N , N \big\rangle_{(W_2^1)',W_2^1} = - \frac{\theta}{1+\theta} \int_\Omega \nabla N \cdot \big( \nabla N - n_1 \nabla w_1 + n_2 \nabla w_2 \big)\ \mathrm{d}x	\\
		& = - \frac{\theta}{1+\theta} \|\nabla N\|_2^2 + \frac{\theta}{1+\theta} \int_\Omega N \nabla N\cdot \nabla w_1\ \mathrm{d}x + \frac{\theta}{1+\theta} \int_\Omega n_2 \nabla N \cdot \nabla W\ \mathrm{d}x \\
		& \le - \frac{\theta}{1+\theta} \|\nabla N\|_2^2 + \frac{\theta}{1+\theta} \| N\|_4 \|\nabla N\|_2 \|\nabla w_1\|_4 + \frac{\theta}{1+\theta} \| n_2\|_4 \|\nabla N\|_2 \|\nabla W\|_4.
	\end{align*}
	We now use the Gagliardo-Nirenberg inequality~\eqref{eq:GN} to estimate the $L_4$-norms, along with Young's inequality, to obtain
	\begin{align*}
		\frac{1+\theta}{2\theta} \frac{\mathrm{d}}{\mathrm{d}t} \|N\|_2^2 + \|N\|_{W_2^1}^2 & \le \|N\|_{2}^2 + c_0^2 \|N\|_{W_2^1}^{1/2} \|N\|_2^{1/2} \|\nabla N\|_2 \|\nabla w_1\|_{W_2^1}^{1/2} \|\nabla w_1\|_2^{1/2}  \\
		& \qquad + c_0^2 \|n_2\|_{W_2^1}^{1/2} \|n_2\|_2^{1/2} \|\nabla N\|_2 \|\nabla W\|_{W_2^1}^{1/2} \|\nabla W\|_2^{1/2} \\
		& \le \|N\|_2^2 + c_0^2 \kappa^{1/2} \|N\|_2^{1/2} \|N\|_{W_2^1}^{3/2} \|w_1\|_{W_2^2}^{1/2} \\
		& \qquad +  c_0^2 \kappa^{1/2} \|n_2\|_{W_2^1}^{1/2}  \|N\|_{W_2^1} \|W\|_{W_2^2}^{1/2} \|W\|_{W_2^1}^{1/2} \\
		& \le C \big( 1 + \|w_1\|_{W_2^2}^2 \big) \|N\|_2^2 + \frac{1}{4}  \|N\|_{W_2^1}^2 + C \|n_2\|_{W_2^1} \|W\|_{W_2^2} \|W\|_{W_2^1}.
	\end{align*}
	Hence,
	\begin{equation}
		\frac{1+\theta}{\theta} \frac{\mathrm{d}}{\mathrm{d}t} \|N\|_2^2 + \frac{3}{2} \|N\|_{W_2^1}^2 \le C \big( 1 + \psi \big) \|N\|_2^2 + C \|n_2\|_{W_2^1} \|W\|_{W_2^2} \|W\|_{W_2^1}. \label{eq:c2}
	\end{equation}
	On the other hand, by~\eqref{eq:ws2}, \eqref{eq:ws4}, and H\"older's and Young's inequalities,
	\begin{align*}
		\frac{1}{2} \frac{\mathrm{d}}{\mathrm{d}t} \|W\|_{W_2^1}^2 & + D \big( \|\nabla W\|_2^2 + \|\Delta W\|_2^2 \big) + \alpha \|W\|_{W_2^1}^2 = \frac{1}{1+\theta} \int_\Omega \big( N W + \nabla N \cdot \nabla W \big)\ \mathrm{d}x \\
		& \le \frac{1}{2(1+\theta)} \big( \|N\|_2^2 + \|W\|_2^2) \big) + \frac{1}{4} \|\nabla N\|_2^2 + \frac{1}{(1+\theta)^2} \|\nabla W\|_2^2.
	\end{align*}
	Since there is $\delta>0$ depending only on $\Omega$, $D$, and $\alpha$ such that
	\begin{equation*}
		\delta \|z\|_{W_2^2}^2 \le D \big( \|\nabla z\|_2^2 + \|\Delta z\|_2^2 \big) + \alpha \|z\|_{W_2^1}^2, \qquad z\in W_2^2(\Omega),
	\end{equation*}
	we further obtain
	\begin{equation}
		 \frac{\mathrm{d}}{\mathrm{d}t} \|W\|_{W_2^1}^2 + 2 \delta \|W\|_{W_2^2}^2 \le \frac{1}{2} \| N\|_{W_2^1}^2 + C \big( \|N\|_2^2 + \|W\|_{W_2^1}^2 \big). \label{eq:c3}
	\end{equation}
	Combining~\eqref{eq:c2} and~\eqref{eq:c3} and using once more Young's inequality give
	\begin{align*}
		& \frac{\mathrm{d}}{\mathrm{d}t} \left[ \frac{1+\theta}{\theta} \|N\|_2^2 + \|W\|_{W_2^1}^2 \right]+ \| N\|_{W_2^1}^2 + 2\delta \|W\|_{W_2^2}^2 \\
		& \qquad \le C \big( 1 + \psi \big) \|N\|_2^2 + C \|n_2\|_{W_2^1} \|W\|_{W_2^2} \|W\|_{W_2^1} + C \big( \|N\|_2^2 + \|W\|_{W_2^1}^2 \big) \\
		& \le \delta  \|W\|_{W_2^2}^2 +  C \big( 1 + \psi \big) \big( \|N\|_2^2 + \|W\|_{W_2^1}^2 \big).
	\end{align*}
	Consequently,
	\begin{equation*}
		\frac{\mathrm{d}}{\mathrm{d}t} \left[ \frac{1+\theta}{\theta} \|N\|_2^2 + \|W\|_{W_2^1}^2 \right] \le C \big( 1 + \psi \big) \left( \frac{1+\theta}{\theta} \|N\|_2^2 + \|W\|_{W_2^1}^2 \right),
	\end{equation*}
	and the integrability~\eqref{eq:c1} of $\psi$, along with Gronwall's lemma, entails that $N=W\equiv 0$ on $(0,T)$ and completes the proof.
\end{proof}

\section*{Acknowledgments}

Part of this work was done while Ph. L. enjoyed the hospitality and support from the Mathematical Institute, Tohoku University, Sendai. We gratefully thank the referees for useful remarks.


%

\bibliographystyle{siam}
\bibliography{SLCSPS}

\end{document}